\newtheorem{theorem}{Theorem}[section]
\newtheorem{lemma}[theorem]{Lemma}
\newtheorem{conjecture}[theorem]{Conjecture}
\newtheorem{definition}[theorem]{Definition}
\newtheorem{claim}[theorem]{Claim}
\newcommand{\oldqed}{}
\def\endofClaim{\hfill\scalebox{.6}{$\Box$}}
\newenvironment{claimproof}[1][Proof]{
  \renewcommand{\oldqed}{\qedsymbol}
  \renewcommand{\qedsymbol}{\endofClaim}
  \begin{proof}[#1]
}{
  \end{proof}
  \renewcommand{\qedsymbol}{\oldqed}
}
\newcommand{\By}[2]{\overset{\mbox{\tiny{#1}}}{#2}}
\newcommand{\ByRef}[2]{   \By{\eqref{#1}}{#2} }
\newcommand{\eqByRef}[1]{ \ByRef{#1}{=} }
\newcommand{\geByRef}[1]{ \ByRef{#1}{\ge} }
\newcommand{\NN}{\mathbb{N}}
\newcommand{\EMAIL}[1]{  \textit{E-mail}: \texttt{#1} }
\newcommand{\Gnp}{G(n,p)}
\newcommand{\cH}{\mathcal{H}}
\newcommand{\cF}{\mathcal{F}}
\newcommand{\cE}{\mathcal{E}}
\newcommand{\cP}{\mathcal{P}}
\newcommand{\cJ}{\mathcal{J}}
\newcommand{\cS}{\mathcal{S}}
\newcommand{\cI}{\mathcal{I}}
\newcommand{\PP}{\mathbb{P}}
\newcommand{\EE}{\mathbb{E}}
\newcommand{\eps}{\varepsilon}
\renewcommand{\subset}{\subseteq}
\def\itm#1{\rm ({#1})}
\def\itmit#1{\itm{\it #1\,}}
\def\rom{\itmit{\roman{*}}}
\def\romprime{\itmit{\roman{*}'}}
\title{Embedding spanning bounded degree graphs in randomly perturbed graphs}
\author[J. Böttcher]{Julia Böttcher*}
\thanks{*
    Department of Mathematics, London School of Economics, Houghton Street,
London WC2A 2AE, UK.
   \EMAIL{j.boettcher@lse.ac.uk}
 }
\author[R. Montgomery]{Richard Montgomery\dag}
\thanks{
    \dag
University of Birmingham, Birmingham, B15 2TT, UK.
   \EMAIL{r.h.montgomery@bham.ac.uk}
 }
\author[O. Parczyk]{Olaf Parczyk \ddag}
\author[Y. Person]{Yury Person \ddag}
\thanks{\ddag Institut f\"ur Mathematik, Technische Universit\"at Ilmenau, 98684 Ilmenau, Germany.
	\EMAIL{\{olaf.parczyk|yury.person\}@tu-ilmenau.de}
}
\thanks{The research leading to this paper was initiated during the
  workshop on
  `Large-Scale Structures in Random Graphs' at the Alan Turing Institute,
  which was financially
  supported by the Heilbronn Institute for Mathematical Research, the Alan
  Turing Institute, and the
  Department of Mathematics at LSE.
  JB is partially supported by EPSRC (EP/R00532X/1).
  OP and YP were supported by DFG grant PE 2299/1-1.}
\begin{document}

\begin{abstract}
We study the model $G_\alpha\cup G(n,p)$ of randomly perturbed dense
graphs, where~$G_\alpha$ is any $n$-vertex graph with minimum degree at
least $\alpha n$ and $G(n,p)$ is the binomial random graph.
We introduce a general approach for studying the appearance of spanning
subgraphs in this model using absorption. This approach yields simpler proofs of several
known results. We also use it to derive the following two new results.

For every $\alpha>0$ and $\Delta\ge 5$, and every $n$-vertex graph $F$ with
maximum degree at most $\Delta$, we show that if
$p=\omega(n^{-2/(\Delta+1)})$ then $G_\alpha \cup G(n,p)$ with high probability
contains a copy of $F$. The bound used for $p$ here is lower by a
$\log$-factor in comparison to the conjectured threshold for the
general appearance of such subgraphs in $G(n,p)$ alone, a typical feature
of previous results concerning randomly perturbed dense graphs.

We also give the first example of graphs where the appearance threshold in
$G_\alpha \cup G(n,p)$ is lower than the appearance threshold in $G(n,p)$
by substantially more than a $\log$-factor.
We prove that, for every $k\geq 2$ and $\alpha >0$, there is some
$\eta>0$ for which the $k$th power of a Hamilton cycle with high probability
appears in $G_\alpha \cup G(n,p)$ when $p=\omega(n^{-1/k-\eta})$. The
appearance threshold of the $k$th power of a Hamilton cycle in $G(n,p)$
alone is known to be $n^{-1/k}$, up to a $\log$-term when $k=2$, and
exactly for $k>2$.
\end{abstract}

\maketitle

\section{Introduction and Results}

Many important results in Extremal Graph Theory and in Random Graph Theory
concern the appearance of spanning subgraphs in dense graphs and in random
graphs, respectively.  In Extremal Graph Theory, minimum degree conditions
forcing the appearance of such subgraphs are studied.  For example, Dirac's
Theorem~\cite{dirac1952some}, one of the cornerstones of Extremal Graph
Theory, states that an $n$-vertex graph with minimum degree at least $n/2$
has a Hamilton cycle when $n\geq 3$. In Random Graph Theory, on the other hand, bounds are
sought on the probability threshold for the appearance of subgraphs in a
random graph.
Let $\Gnp$ be the \emph{binomial random graph} model with vertex set $[n]$,
where each possible edge is chosen independently at random with probability~$p$.
We say that $\Gnp$ has some property $\cP$ \emph{with high probability} (whp)
if $\lim_{n\to\infty}\PP [\Gnp \in \cP]=1$.
A key result by P\'{o}sa~\cite{Pos76} and
Kor\v{s}unov~\cite{Kor76} is that $G(n,p)$ with high probability contains a
Hamilton cycle if $p=\omega(\log n/n)$, whereas if $p=o(\log n/n)$ then
$G(n,p)$ with high probability does not. Here, we write $p(n)=\omega\big(f(n)\big)$
to signify $p(n)/f(n)\to\infty$, and $p(n)=o\big(f(n)\big)$
to signify $p(n)/f(n)\to 0$.

The study of randomly perturbed graphs combines these two approaches by
taking the union of a graph satisfying some minimum degree condition and a
random graph $G(n,p)$. The goal is then to determine which minimum degree conditions and
edge probabilities suffice to guarantee some given subgraph with high probability.
Bohman, Frieze and Martin~\cite{bohman2004adding}, who pioneered the study
of randomly perturbed graphs, proved that for every $\alpha>0$ the union of
every $n$-vertex graph with minimum degree at least $\alpha n$ and a random
graph $G(n,p)$ with $p=\omega(1/n)$ contains whp a Hamilton
cycle. This result shows that, compared to Dirac's Theorem, a much smaller
minimum degree condition suffices in a randomly perturbed graph, and
compared to the random graph $G(n,p)$ alone a $\log$-term improvement in the
edge probability is possible.

The recent increased interest in randomly perturbed graphs sparked a
collection of results of a similar flavour, typically featuring a small
linear minimum degree condition and a $\log$-term improvement in the edge
probability. In this paper, we contribute to this body of research by
developing a new general method for establishing such results for spanning
subgraphs. Our approach uses an absorbing method. We show that this
new approach gives simpler proofs of a number of known results, whose
original proofs often
use the regularity method and are therefore technically more complex.
It
also allows us to give strong new results
concerning powers of Hamilton cycles and
general bounded degree spanning subgraphs in randomly perturbed
graphs. In particular, our result on powers of Hamilton cycles
provides the first example for graphs with
an $n^{\Omega(1)}$ improvement in the edge probability compared to $G(n,p)$.
A similar phenomenon was already discovered in the context of hypergraphs
by McDowell and Mycroft~\cite{MM_HyperPeturbed}, which we will return to in our concluding remarks.

Before discussing our techniques and results in more detail, we set our work in context by summarising related results in random graphs and randomly perturbed graphs.

\subsection{Thresholds in \texorpdfstring{$\Gnp$}{G(n,p)}}

We say that the function $\hat{p} : \mathbb{N} \rightarrow [0,1]$ is a
\emph{threshold} for a graph property $\cP$, if
\[\lim_{n\to\infty}\PP [\Gnp \in \cP] = \begin{cases}
    0 \qquad &\text{whenever $p = o( \hat{p})$}, \text{ and} \\
    1 & \text{whenever $p = \omega (\hat{p})$}\,.
\end{cases}
\]
If only the latter is known to be true, then we say that
$\hat{p}$ is an upper bound for the threshold for $\cP$ in $\Gnp$.
Containing a graph as a (not necessarily induced) subgraph is a monotone
property and therefore it has a threshold by a result of Bollob\'as and
Thomason~\cite{bollobas1987threshold}.
In the following, we will focus on spanning subgraphs.

In their seminal work, Erd\H{o}s and R\'{e}nyi~\cite{ErdRen66} proved that the threshold for perfect matchings in $\Gnp$ is $\log n/n$.
P\'{o}sa~\cite{Pos76} and Kor\v{s}unov~\cite{Kor76} independently showed that the property of having a Hamilton cycle has the same threshold.

The problem of finding powers of Hamilton cycles as a subgraph is generally considered
a stepping stone towards results for more general spanning subgraphs.
The \emph{$k$th power} $G^{(k)}$ of a graph $G$ is the graph obtained from $G$ by connecting all vertices at distance at most $k$.
K\"uhn and Osthus~\cite{KO12} observed that the threshold in $G(n,p)$ for the $k$th power of a Hamilton cycle when
$k\ge 3$ is $n^{-1/k}$; this follows
from a general embedding theorem due to Riordan~\cite{Riordan} (see Theorem~\ref{theorem:Riordan}). Similarly, the threshold of the
square of a Hamilton cycle is conjectured to be $n^{-1/2}$, but this is
still open. Currently, the best known upper bound, by Nenadov and
\v{S}kori\'c~\cite{nenadov2016powers}, is off by a $O(\log^4n)$-factor from this conjectured threshold.

For a graph~$H$, an \emph{$H$-factor} on~$n$ vertices is the vertex disjoint
union of copies of~$H$ with~$n$ vertices in total. An \emph{almost  $H$-factor} in an $n$-vertex graph~$G$ is a subgraph of~$G$ that is an
$H$-factor on~$(1-\varepsilon)n$ vertices.
A breakthrough result was achieved by
Johansson, Kahn and Vu~\cite{JohanssonKahnVu_FactorsInRandomGraphs} who
showed that the threshold for a $K_{\Delta+1}$-factor, that is
$\frac{n}{\Delta+1}$ vertex-disjoint copies of $K_{\Delta+1}$, is given by
\begin{align*} p_\Delta= \Big(\frac{\log^{1/\Delta} n}{n}\Big)^{2/(\Delta+1)}.
\end{align*}
In fact, their result concerns, more generally, $H$-factors for
strictly balanced graphs~$H$.
The \emph{$1$-density} of a graph~$H$ on at least~$2$ vertices is
\[m_1(H)=\max_{H'\subseteq H, v(H')>1}\frac{e(H')}{v(H')-1}\,,\] and a graph is called
\emph{strictly balanced} if~$H$ is the only maximiser in $m_1(H)$.
Johansson, Kahn and Vu~\cite{JohanssonKahnVu_FactorsInRandomGraphs} proved that for factors of strictly balanced graphs $H$, the threshold
is $n^{-1/m_1(H)} \log^{1/e(H)}n$.
Gerke and McDowell~\cite{GerMcD}, on the other hand, showed that for
certain (but not all) graphs~$H$ which are not strictly balanced, this threshold is
$n^{-1/m_1(H)}$.

Let us now turn to larger classes of graphs.
For bounded degree spanning trees, the second author~\cite{MontTrees} showed
that, for each fixed $\Delta$, $\log n/n$ is  the appearance threshold for
single spanning trees with maximum degree at most $\Delta$ (see also~\cite{M14a}).

More generally,
let $\cF(n,\Delta)$ be the family of graphs on $n$ vertices with maximum degree at most $\Delta$.
For some constant $C$, Alon and Füredi~\cite{AlonFuredi_SpanningSubgraphs} proved that, if $p \ge C (\log n/n)^{1/\Delta}$,  then $\Gnp$ contains any single graph from $\cF(n,\Delta)$ whp.
This is far from optimal and, since the clique-factor is widely believed to have the highest appearance threshold among the graphs in $\cF(n,\Delta)$, the following well-known conjecture is natural.
\begin{conjecture}\label{thresconj}
If $\Delta\in \NN$, $F \in \cF(n,\Delta)$ and $p = \omega(p_\Delta)$, then $\Gnp$  whp contains a copy of~$F$.
\end{conjecture}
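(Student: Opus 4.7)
The plan is to combine a two-round exposure of $\Gnp$ with an absorbing-plus-nibble strategy. Split $G(n,p)=G_1\cup G_2$ where $G_1\sim G(n,p_1)$ and $G_2\sim G(n,p_2)$ are independent and $p_1,p_2=\omega(p_\Delta)$. The first round will carry an almost-spanning copy of $F-A$ for a small absorbable set $A\subset V(F)$, and the second round will patch in the vertices of $A$.

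I would first extract an independent set $A\subset V(F)$ of size $\varepsilon n$ such that $F-A$ still has maximum degree $\Delta$; since $F\in\cF(n,\Delta)$, a random $(\varepsilon/(\Delta+1))$-density choice suffices. For each $v\in A$ I would fix an $O(1)$-vertex rooted absorber $H_v\subseteq F$ so that $v$ can be reinserted into any partial embedding that correctly embeds $H_v-v$. Exposing $G_2$ last, for each $v$ only $O(1)$ random edges need to land in the right place, and a union bound should show whp that all $|A|$ reinsertions succeed.

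The crux -- and the conjecture itself -- is the near-spanning embedding of $F-A$ in $G_1\sim G(n,p_1)$ with $p_1=\omega(p_\Delta)$. For strictly balanced $H$-factors this is exactly Johansson--Kahn--Vu, proved via an entropy argument on the uniform measure on $H$-factors. To attempt the extension to arbitrary $F\in\cF(n,\Delta)$ I would partition $F-A$ into small bounded-degree chunks $F_1,\dots,F_t$, embed them sequentially by a R\"odl-nibble-type process, and bound the log-entropy of the resulting random embedding so as to lower-bound the number of embeddings of $F-A$. Riordan's second-moment bound (Theorem~\ref{theorem:Riordan}) is tight for spanning subgraphs whose densest piece is $K_{\Delta+1}$ but loses a $\log$-factor in general, and closing this gap is the heart of the conjecture.

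The main obstacle is precisely this last step. Turning a second-moment estimate into an actual embedding requires either an entropy-compression argument in the spirit of Johansson--Kahn--Vu that is uniform over $F\in\cF(n,\Delta)$ -- currently accessible only when $F$ itself is strictly balanced -- or an expectation-threshold or fractional-spread statement applied to the family of embeddings of $F$. Neither ingredient is available in the generality the conjecture requires, and pushing the density below $(\log n/n)^{1/\Delta}$ for arbitrary bounded-degree $F$ defeats every embedding technique that proceeds by iterated vertex insertion. In the randomly perturbed setting the linear-minimum-degree host supplies the missing $\log$-factor of slack for free, which is precisely why the perturbed problem admits an absorbing approach while Conjecture~\ref{thresconj} remains open.
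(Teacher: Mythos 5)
The statement you were asked about is Conjecture~\ref{thresconj}: the paper does not prove it and explicitly treats it as open (it cites a resolution only for $\Delta=2$ by Ferber, Kronenberg and Luh, and partial progress otherwise), so there is no ``paper proof'' to match, and your proposal --- which candidly ends by conceding the key ingredient is unavailable --- is an outline of a possible attack, not a proof.

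Beyond that, your sketch misplaces the bottleneck and contains a step that fails outright. First, the almost-spanning embedding you identify as the crux is in fact known at $p=\omega(p_\Delta)$ for $\Delta\ge 5$: this is exactly Theorem~\ref{Theorem:Ferber} (Ferber, Luh and Nguyen), so for large $\Delta$ the missing piece is not the embedding of $F-A$ but the completion. Second, your completion step does not work in $G(n,p)$ alone. To reinsert a leftover vertex $v$ with $\deg_F(v)=d\le\Delta$ you need some unused host vertex $u$ joined in $G_2$ to the $d$ fixed images of $N_F(v)$; for a fixed $u$ this has probability $p_2^{d}$, and the expected number of usable $u$ is about $n p_2^{\Delta}$, which for $p_2=\omega(p_\Delta)$ is of order $n^{1-2\Delta/(\Delta+1)}=n^{-(\Delta-1)/(\Delta+1)}$ (up to logarithmic factors), i.e.\ $o(1)$ per vertex when $\Delta\ge 2$. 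So ``only $O(1)$ random edges need to land in the right place'' is precisely the problem: they must land at prescribed pairs, and a union bound over $|A|=\varepsilon n$ vertices cannot succeed when each individual reinsertion fails in expectation. This is exactly why the paper's absorbing strategy needs the deterministic graph $G_\alpha$: its linear minimum degree yields reservoir sets $R(u)$ of linear size (Lemma~\ref{lemma:Bsize}), giving each leftover vertex linearly many candidate switches, a resource that simply does not exist in $G(n,p)$ at density $\omega(p_\Delta)$. Closing the spanning case of Conjecture~\ref{thresconj} therefore requires a genuinely new idea for handling the last $\varepsilon n$ vertices (or the full-strength entropy/spread machinery you allude to), not the absorber-plus-union-bound step as written.
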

For $\Delta=2$, this conjecture was very recently resolved by Ferber, Kronenberg and Luh~\cite{ferber2016optimal}, who in fact showed a stronger \emph{universality} statement,
where all graphs in $\cF(n,\Delta)$ are found simultaneously.
For larger $\Delta$, Riordan~\cite{Riordan} gave a general result (see Theorem~\ref{theorem:Riordan}), which requires an edge probability within a factor of $n^{\Theta(1/\Delta^2)}$ from $p_\Delta$.
The current best result in the direction of Conjecture~\ref{thresconj} is the following \emph{almost} spanning version by Ferber, Luh and Nguyen~\cite{ferber2016embedding}.
\begin{theorem}[Ferber, Luh and Nguyen~\cite{ferber2016embedding}]
\label{Theorem:Ferber}
Let $\varepsilon > 0 $ and $\Delta \ge 5 $.
For every $F \in \cF((1-\varepsilon)n, \Delta)$ and $p =  \omega
(p_\Delta)$ the random graph $\Gnp$ whp contains a copy of $F$.
\end{theorem}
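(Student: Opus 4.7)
The plan is a two-round exposure of $G(n,p)$ combined with a peel-and-match strategy. Write $G(n,p)=G_1\cup G_2$ with independent $G_i\sim G(n,p_i)$ and $p_i=(1+o(1))p/2$, using $G_1$ for the bulk of the embedding and reserving $G_2$ for the completion step. The first move is to partition $V(F)=A\cup B$, where $|B|$ is a suitable linear fraction of $V(F)$, chosen so that (i) every $b\in B$ has strictly fewer than $(\Delta+1)/2$ neighbours in $A$, (ii) the neighbourhoods $\{N_F(b)\cap A:b\in B\}$ are well-spread in the sense that few $b$ share a common $A$-neighbour, and (iii) $F[A]$ is $K_{\Delta+1}$-free and therefore has $1$-density strictly below $(\Delta+1)/2$. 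One produces such a partition by combining a random partition of $V(F)$ (using $\Delta(F)\le\Delta$) with an alteration step that removes one vertex for each copy of $K_{\Delta+1}$ in $F$ (of which there are $O(n)$). Simultaneously, I would designate a uniformly random reservoir $W\subset[n]$ of size $|B|$ to host the image of $B$.

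Next, I would embed $F[A]$ into $[n]\setminus W$ using only $G_1$. Since $|A|=(1-o(1))n$, $F[A]$ has maximum degree $\Delta$, and $F[A]$ has $1$-density strictly below $(\Delta+1)/2$ (the exponent driving $p_\Delta$), the condition $p=\omega(p_\Delta)$ lies above the almost-spanning embedding threshold for $F[A]$. I would execute this via a random greedy embedding along a vertex ordering of $A$ compatible with a decomposition of $F[A]$ into bounded-size pieces, tracking for each unembedded vertex the evolution of its candidate set via martingale concentration (Azuma or Talagrand). The outcome is an embedding $\phi\colon A\hookrightarrow[n]\setminus W$ and, for each $b\in B$, a candidate set $C_b\subset W$ of vertices jointly $G_1$-adjacent to $\phi(N_F(b)\cap A)$ with $|C_b|=\Omega(|W|p^{|N_F(b)\cap A|})\to\infty$ by property (i).

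Finally, I would extend $\phi$ to $B$ using $G_2$. Build the bipartite auxiliary graph $H$ on $B\cup W$ with $bv\in E(H)$ iff $v\in C_b$ and $v$ is $G_2$-adjacent to the images of all $A$-neighbours of $b$; a $B$-saturating matching in $H$ encodes the $A$-to-$B$ edges of $F$, with Hall's condition verified by a union bound leveraging the well-spread property (ii). The $F[B]$-edges are dealt with either by pre-building inside $W$ during step two an absorbing structure that can supply any $F[B]$-pattern in $G_2$, or by iterating the matching in rounds along an acyclic orientation of $F[B]$ of small back-degree. The hardest part is reconciling the three steps: property (i) forces $F[B]$ to be dense (min degree at least $\lceil(\Delta+1)/2\rceil$), making the $F[B]$-edge handling at low $p$ delicate. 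Only the slack afforded by the hypothesis $\Delta\ge5$ (which opens the density gap between $K_\Delta$ and $K_{\Delta+1}$) combined with the well-spread property enables the absorption/matching analysis to go through at every round.
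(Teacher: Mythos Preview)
This theorem is a result of Ferber, Luh and Nguyen and is only cited here, not proved; however, the paper describes their method in Section~5 and adapts it for Theorem~\ref{theorem:main}, so I compare against that.

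Your decomposition is self-defeating. Requirements (i) and (iii) cannot coexist in the way you need. If $F$ contains a copy of $K_{\Delta+1}$, then by (iii) at least one vertex of it lies in $B$; but every vertex of a $K_{\Delta+1}$ has all $\Delta$ of its $F$-neighbours inside that clique, so by (i) more than $(\Delta-1)/2$ of them must also lie in~$B$. Iterating, more than half of every $K_{\Delta+1}$ ends up in $B$, so $F[B]$ contains a $K_{\lceil(\Delta+1)/2\rceil}$ for every copy of $K_{\Delta+1}$ in $F$. Your completion step now has to embed, inside the small reservoir $W$ and at edge probability $p=n^{-2/(\Delta+1)+o(1)}$, a graph that is just as dense (in the $\gamma$ sense) as the dense part you peeled off in the first place. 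Neither of your two suggestions handles this: an acyclic orientation of a clique has back-degree growing with the clique, so the ``iterated matching'' produces candidate sets of size $np^{\Theta(\Delta)}=o(1)$; and the ``absorbing structure'' remark is not a plan. The $\Delta\ge 5$ hypothesis does not open a density gap here---it is used in the actual proof for an entirely different reason (see below).

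The approach that works is different in kind. One does not separate \emph{vertices} into sparse/dense but rather peels off small \emph{induced subgraphs}. The decomposition (Definition~\ref{def:good} and Lemma~\ref{lemma:decompose}) writes $F$ as a sparse part $F'$ with $\gamma(F')\le(\Delta+1)/2$ together with boundedly many families $\cS_1,\dots,\cS_k$ of pairwise isomorphic, minimally dense spots of size at most $2\Delta+1$. The sparse part is embedded spanningly via Riordan's theorem (Theorem~\ref{theorem:Riordan}), which uses the parameter $\gamma$, not the $1$-density; your random-greedy scheme has no obvious spanning guarantee at this $p$. Then the dense spots are embedded \emph{as units}, one family at a time, using Janson's inequality: the key calculations (Claim~\ref{claim0}) exploit the minimality of each dense spot to control every proper sub-configuration, and this is precisely where $\Delta\ge 5$ enters---for $\Delta=4$ a specific dense spot (a triangle with two pendant edges at each vertex) breaks the estimates. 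Your outline has no analogue of this structural control.
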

The approach in~\cite{ferber2016embedding} is based on ideas from Conlon, Ferber, Nenadov and \v{S}kori\'{c}~\cite{conlon2016almost}, who proved a stronger universality statement for the almost spanning case while using the edge probability $n^{-1/(\Delta-1)} \log^5 n$. Theorem~\ref{Theorem:Ferber} for $\Delta=3$ was thus already known (up to a $\log$-factor), whereas the case for $\Delta=4$ remains open.
For spanning subgraphs, very recently, Ferber and Nenadov~\cite{FerbeNenadovSpanning} showed that
for $p \ge (\log^3 n/n)^{1/(\Delta-1/2)}$ the random graph $\Gnp$ whp
contains all graphs in $\cF(n,\Delta)$ universally.

In the almost spanning case, the $\log$-term in $p_\Delta$ is expected to be redundant~\cite{ferber2016embedding}, but this remains open. In this paper, we will show that the $\log$-term in $p_\Delta$ is redundant, even in the spanning case, if we add $\Gnp$ to a deterministic graph with linear minimum degree.

\subsection{Randomly perturbed graphs}

Bohman, Frieze and Martin~\cite{bohman2003many} introduced the following model of randomly perturbed graphs.
For $\alpha \in (0,1)$ and an integer~$n$, we first let $G_\alpha$ be any
$n$-vertex graph with minimum degree at least $\alpha n$.
We then reveal more edges among the vertices of~$G_\alpha$ independently at
random with probability~$p$.
The resulting graph $G_\alpha \cup \Gnp$ is a \emph{randomly perturbed
  graph} and we are interested in its properties. In particular,
research has focused on
comparing thresholds in $G_\alpha \cup \Gnp$ to thresholds in $\Gnp$.

Again, we concentrate on spanning subgraphs. Note that the existence of such
subgraphs in $G_\alpha \cup \Gnp$ is a monotone property (in $\Gnp$),
and thus has a threshold.
Of course, if $\alpha \ge 1/2$, then $G_\alpha$ is itself Hamiltonian by Dirac's Theorem. 
For $\alpha \in (0,1/2)$, Bohman, Frieze and Martin~\cite{bohman2003many} showed the existence of some $c=c(\alpha)>0$ so
that, if $p = c/n$, then, for any $G_\alpha$, there is a Hamilton cycle in $G_\alpha \cup \Gnp$ whp.
They also proved that this is optimal: there exists some $c'>0$ so that, there are graphs $G_\alpha$ such that $G_\alpha \cup G(n,c'/n)$ is not Hamiltonian whp.
Comparing this threshold to the threshold for Hamiltonicity in $\Gnp$ we
note an extra factor of $\log n$ in the latter. This $\log n$ term is
necessary to guarantee minimum degree at least $2$ in $\Gnp$ -- otherwise
clearly no Hamilton cycle exists. In the model $G_\alpha\cup \Gnp$, however, this already holds in $G_\alpha$ alone.

Krivelevich, Kwan and Sudakov~\cite{krivelevich2015bounded} studied the corresponding problem for the containment of spanning  trees of maximum degree $\Delta$ in $G_\alpha\cup \Gnp$.
For  $p= c(\eps,\Delta)/n$ it is already possible to find any \emph{almost}
spanning bounded degree tree on $(1-\eps)n$ vertices in $\Gnp$~\cite{alon2007embedding}.
The addition of $G_\alpha$ then ensures there are no isolated vertices, and Krivelevich, Kwan and Sudakov~\cite{krivelevich2015bounded} showed that this indeed allows every vertex to be incorporated into the embedding.
They thus prove that, for $\alpha>0$, maximum degree $\Delta$ and $p= c(\alpha,\Delta)/n$ every spanning bounded degree tree is contained in $G_\alpha\cup \Gnp$.

Very recently, Balogh, Treglown and Wagner~\cite{BTW_tilings} determined the threshold of appearance for general factors in the model $G_\alpha \cup \Gnp$.
They proved that for every~$H$, if $p=\omega(n^{-1/m_1(H)})$, then $G_\alpha\cup \Gnp$ contains an $H$-factor whp.
Comparing this to the result of Johansson, Kahn and Vu~\cite{JohanssonKahnVu_FactorsInRandomGraphs}, we observe again a
saving of a $\log$-term. For the graphs~$H$ covered by the result of Gerke
and McDowell~\cite{GerMcD}, on the other hand, we see that the thresholds in
$G_\alpha\cup \Gnp$ and in $\Gnp$ are the same.

Other monotone properties considered in the randomly perturbed graph model
include containing a fixed sized clique, having small diameter,
being $k$-connected~\cite{bohman2004adding}, and being non-$2$-colourable~\cite{sudakov2007many}.

\subsection{Our Results}
Our main contribution to the study of randomly perturbed graphs
is the introduction of a new approach for obtaining results concerning
spanning subgraphs.
The basic idea is to use some random edges with the assistance of the deterministic edges to create so-called \emph{reservoir sets}.
Our key technical result is Theorem~\ref{newtheorem}, which gives a
condition for applying this method to spanning subgraphs. We defer the
statement of this result along with the necessary definitions to Section~\ref{sec:aa}.

Using our method, we analyse the model $G_\alpha\cup \Gnp$ with respect to
the containment of spanning bounded degree graphs,
addressing a problem which was highlighted by
Krivelevich, Kwan and Sudakov in the concluding remarks of~\cite{krivelevich2015bounded}.
 We obtain the following result.
\begin{theorem}
\label{theorem:main}
Let $\alpha > 0$ be a constant, $\Delta \ge 5$ be an integer and $G_\alpha$
be a graph with minimum degree at least $\alpha n$.
Then, for every $F \in \cF(n, \Delta)$ and $p = \omega \left( n^{-2/(\Delta+1)} \right)$, whp $G_\alpha\cup \Gnp$ contains a copy of $F$.
\end{theorem}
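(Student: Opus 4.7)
The plan is to apply the absorption method, with $G_\alpha$ supplying the structure that allows us to save the logarithmic factor separating $n^{-2/(\Delta+1)}$ from the conjectured threshold $p_\Delta$. By a standard coupling, split $G(n,p)$ into two independent random graphs $G(n,p_1) \cup G(n,p_2)$ with $p_1, p_2 = \omega(n^{-2/(\Delta+1)})$. The role of $G(n,p_1)$ together with $G_\alpha$ is to prepare absorbing gadgets for a small reservoir of vertices of $F$, while $G(n,p_2)$ together with $G_\alpha$ handles the bulk embedding.

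In $F$, pick greedily a reservoir $R \subseteq V(F)$ of size $\eps n$ whose vertices are pairwise at distance at least three; since $\Delta(F) \le \Delta$, such a set exists. For every $v \in R$ with $N_F(v) = \{u_1, \dots, u_d\}$, $d \le \Delta$, prepare in the host graph an absorbing gadget consisting of hook vertices $x_1, \dots, x_d$ (intended to host $u_1, \dots, u_d$) together with a slot vertex $y$ (intended to host $v$). Because $G_\alpha$ has minimum degree $\alpha n$, every choice of at most $\Delta$ hook vertices has $\Omega(n)$ common $G_\alpha$-neighbours, so there are many candidate slots; the random edges of $G(n,p_1)$ merely provide the additional adjacencies needed to make the gadget robust under an arbitrary order of embedding. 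A Hall-type or nibble argument produces a linear-size, vertex-disjoint family of such gadgets, one per reservoir vertex; this is exactly the general framework captured by Theorem~\ref{newtheorem}.

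With the gadgets and hooks reserved, embed $F-R$ into the remaining vertices of $G_\alpha \cup G(n,p_2)$ so that the neighbours of each $v \in R$ land on the designated hooks, and then activate the gadgets to place the reservoir vertices. Note that $p_2 = \omega(n^{-2/(\Delta+1)})$ sits below the almost-spanning threshold $\omega(p_\Delta)$ of Theorem~\ref{Theorem:Ferber}, so this step cannot be carried out using random edges alone; we must combine $G_\alpha$ with $G(n,p_2)$, for instance by using a regularity-type decomposition of $G_\alpha$ as a skeleton and employing $G(n,p_2)$ to stitch in the remaining local structure, respecting the hook constraints via an auxiliary random partition of the host vertices.

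The main obstacle is the construction of the absorbing gadgets at the edge probability $p_1 = \omega(n^{-2/(\Delta+1)})$, which is a $\log$-factor below the $K_{\Delta+1}$-factor threshold $p_\Delta$. No purely random clique-based absorber is available at this density, so the design must exploit $G_\alpha$ in a structural way: the $\Omega(n)$ common $G_\alpha$-neighbours of any $\Delta$-tuple of hooks do the bulk of the work, while only a small number of random edges supply the switching flexibility that makes each gadget capable of absorbing any possible image of its reservoir vertex. Formulating this flexibility abstractly so that it applies to any bounded-degree target $F \in \cF(n,\Delta)$ is precisely the purpose of the general key technical result Theorem~\ref{newtheorem}, and once that abstract condition is in place the verification for $F$ reduces to standard first- and second-moment calculations at the edge density $\omega(n^{-2/(\Delta+1)})$.
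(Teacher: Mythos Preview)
Your outline invokes Theorem~\ref{newtheorem} but does not actually verify its hypotheses, and two of the steps you sketch contain genuine gaps.

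First, the sentence ``Because $G_\alpha$ has minimum degree $\alpha n$, every choice of at most $\Delta$ hook vertices has $\Omega(n)$ common $G_\alpha$-neighbours'' is false. Take $G_\alpha$ to be a disjoint union of cliques of size $\alpha n$; then any two vertices from different cliques have no common neighbour at all. The paper never uses common neighbourhoods in $G_\alpha$. Its reservoir mechanism is the reverse of yours: a maximal $2$-independent set $W^*$ is chosen \emph{inside} the almost-spanning subgraph $F^*$, and the key point (Lemma~\ref{lemma:Bsize}) is that for a \emph{uniformly random} placement $\hat F$ of $F^*$ in $[n]$, each vertex $u$ has linearly many $w\in W$ with $N_{\hat F}(w)\subset N_{G_\alpha}(u)$. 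The randomness of the copy does the work that common neighbourhoods cannot.

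Second, your bulk-embedding step is the heart of the matter and you leave it open. You correctly note that $p_2=\omega(n^{-2/(\Delta+1)})$ can be below the almost-spanning threshold $p_\Delta$ of Theorem~\ref{Theorem:Ferber}, and then propose a ``regularity-type decomposition of $G_\alpha$'' without further detail. But condition~\ref{AA1} in Definition~\ref{suitabledefn} asks for a copy of some $F^*\in\cF$ in $G(n,p/2)$ \emph{alone}, with no help from $G_\alpha$, so appealing to $G_\alpha$ here is not permitted. The paper resolves this by taking $\cF$ to be a genuinely large family: using the Ferber--Luh--Nguyen $\eps$-good decomposition (Lemma~\ref{lemma:decompose}) of $F$ into a sparse part $F'$ with $\gamma(F')\le(\Delta+1)/2$ and constant-size dense spots $\cS_1,\dots,\cS_k$, it lets $\cF$ consist of all induced subgraphs covering $F'$ and all but $\eps n/(s_h^2k)$ spots from each $\cS_h$. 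Then $F'$ is embedded by Riordan's theorem (Theorem~\ref{theorem:Riordan}), and Janson's inequality (Lemma~\ref{lemma:proofalmost}) shows most dense spots can be attached, all at $\omega(n^{-2/(\Delta+1)})$. Condition~\ref{AA2} is then verified for the \emph{remaining} dense spots via another Janson calculation (Lemma~\ref{lemma:CompleteEmbedding}) combined with the hypergraph Hall theorem; one edge per spot is supplied deterministically by $G$, which is exactly what buys the missing $\log$-factor. None of this is ``standard first- and second-moment calculations'': Claim~\ref{claim0} and its variant~\ref{claim4} require the precise structure of minimally dense subgraphs and fail without $\Delta\ge 5$.
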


Our bound on $p$ in Theorem~\ref{theorem:main} is best possible in the
following sense.
In the case where $F$ is a $K_{\Delta+1}$-factor on $n$ vertices
and $G_\alpha$ is a complete bipartite graph with parts of size $\alpha n$ and $(1-\alpha)n$, we need to find an almost
spanning $K_{\Delta+1}$-factor on $(1-\alpha (\Delta+1))n$ vertices in
$\Gnp$. This can easily be shown to require $p=\Omega \left(
  n^{-2/(\Delta+1)} \right)$. Note in addition that the edge probability used in Theorem~\ref{theorem:main} is lower by a $\log$-term in comparison to the anticipated threshold for the graph $F$ to appear in $\Gnp$~(see Conjecture~\ref{thresconj}).

\smallskip

Our second result deals with  powers of Hamilton cycles. Here we can
save a polynomial factor  $n^{\Omega(1)}$
compared to the threshold $n^{-1/k}$ in $\Gnp$.

\begin{theorem}\label{theorem:HC2}
	For each $k\geq 2$ and $\alpha>0$, there is some $\eta >0$, such that
    if $G_\alpha$ is an $n$-vertex graph with minimum degree at least
    $\alpha n$, then $G_\alpha\cup G(n,n^{-1/k-\eta})$ whp contains the $k$th power of a Hamilton cycle.
\end{theorem}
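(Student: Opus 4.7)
The plan is to invoke Theorem~\ref{newtheorem}, the general absorption result announced in Section~\ref{sec:aa}, with the spanning target $F_n = C_n^{(k)}$, the $k$th power of the Hamilton cycle on $n$ vertices. That framework reduces the task to supplying two structural ingredients inside $G_\alpha \cup \Gnp$: an absorbing gadget built out of a small reservoir $R \subseteq V$ with enough flexibility to incorporate any sublinear leftover set, and an almost-spanning $k$th power of a path on $V \setminus R$ whose endpoints can be plugged into the absorber to close up into a full $k$th power of a Hamilton cycle.

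First split the random edges into three independent sparser copies $G_1 \cup G_2 \cup G_3 \subseteq \Gnp$ and sample a reservoir $R \subseteq V$ of polynomial size $n^{\gamma}$ uniformly at random, for some small $\gamma = \gamma(k,\alpha) > 0$. For every vertex $v \in V$ I look for $v$-absorbers, tuples $(r_1, \ldots, r_{2k}) \in R^{2k}$ of distinct reservoir vertices for which, inside $G_\alpha \cup G_1 \cup G_2$, both
\[ r_1 r_2 \cdots r_{2k} \quad \text{and} \quad r_1 \cdots r_k\, v\, r_{k+1} \cdots r_{2k} \]
are $k$th powers of paths. To count these I pick the $r_i$'s greedily inside the common $G_\alpha$-neighbourhood of $v$ and the previously chosen $r_j$, so that most internal adjacencies among the $r_i$'s are already present in $G_\alpha$ by construction; only the $2k$ edges from $v$ to the $r_i$'s together with a bounded (in $k$) number of ``crossover'' internal edges must be supplied by the random part. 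For $\eta$ small enough in terms of $k$ and $\alpha$, a first-moment calculation then makes the expected number of $v$-absorbers polynomially large, and the condition demanded by Theorem~\ref{newtheorem} is verified by a standard concentration argument.

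For the almost-spanning cover I use $G_3$ together with $G_\alpha$. The key arithmetical observation is that $\tfrac{1}{k} + \eta < \tfrac{2}{k+1}$ whenever $\eta < \tfrac{k-1}{k(k+1)}$, so $G_3$ sits above the Johansson--Kahn--Vu threshold for $K_{k+1}$-factors and contains an almost $K_{k+1}$-factor in $V \setminus R$ covering all but $o(n)$ vertices. These $K_{k+1}$'s are then threaded together into a $k$th power of a long path by a connecting argument: $G_\alpha$'s linear minimum degree supplies $O(1)$ bridge vertices between consecutive cliques lying inside the appropriate iterated common neighbourhood, with any remaining required adjacencies coming either from $G_\alpha$ or from a tiny additional slice of random edges. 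The endpoints of this long path then meet the reservoir absorbing structure, and the absorbers are invoked to soak up the $o(n)$ leftover vertices while closing the path into a cycle.

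The main obstacle is that $p = n^{-1/k-\eta}$ lies polynomially below the $\Gnp$-appearance threshold of $C_n^{(k)}$, so $G_\alpha$ must contribute genuine structure during the covering step rather than acting merely as a finishing device, which is why the absorption framework has to interact carefully with a $K_{k+1}$-factor extracted from the random part. Accordingly, the admissible $\eta = \eta(k,\alpha)$ is constrained from two sides: the slack $(k-1)/(k(k+1))$ available in the $K_{k+1}$-factor step, and the per-absorber random-edge budget; the latter shrinks with $k$ but stays positive for every fixed $k$, which is why the theorem only claims some unspecified $\eta>0$ depending on both $k$ and $\alpha$.
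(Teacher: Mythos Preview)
Your proposal misreads what Theorem~\ref{newtheorem} actually asks for. That theorem does not ask you to build a reservoir $R$ and per-vertex absorbing tuples inside $G_\alpha\cup\Gnp$; it asks you to exhibit an $(\alpha,p)$-suitable family $\cF$ of induced subgraphs of $F=C_n^{(k)}$ satisfying the two conditions \ref{AA1} and \ref{AA2} in Definition~\ref{suitabledefn}. The reservoir sets are produced for you by the framework (Lemma~\ref{lemma:Bsize}) once \ref{AA1} and \ref{AA2} are in place, and the second step lives on an auxiliary graph on $[2n]$ with the sets $B(v)$, not on $G_\alpha$ directly. In the paper's proof (Section~\ref{sec:HC2}) one takes $\cF=\{F^*\}$ where $F^*$ is a disjoint union of copies of $P_m^{(k)}$ and $P_{m+1}^{(k)}$ on $(1-\eps)n$ vertices; \ref{AA1} then follows from Theorem~\ref{thm:factors} because $m_1(P_m^{(k)})<k$, and \ref{AA2} is verified via the hypergraph Hall theorem and a Janson calculation (Lemma~\ref{extendnew}) that connects consecutive path-powers through the sets $B(v)$.

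Setting aside the misreading and treating your sketch as a self-contained absorbing argument, there is a concrete failure: the key steps lean on iterated common $G_\alpha$-neighbourhoods (``pick the $r_i$'s greedily inside the common $G_\alpha$-neighbourhood of $v$ and the previously chosen $r_j$'', and similarly ``$G_\alpha$'s linear minimum degree supplies $O(1)$ bridge vertices \ldots\ inside the appropriate iterated common neighbourhood''). A minimum-degree hypothesis $\delta(G_\alpha)\ge\alpha n$ gives no codegree control whatsoever; for instance, if $G_\alpha=K_{\alpha n,(1-\alpha)n}$ and $v,r_1$ lie in different parts then $N_{G_\alpha}(v)\cap N_{G_\alpha}(r_1)=\emptyset$. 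Thus neither your absorber count nor your threading of $K_{k+1}$'s goes through as stated: once the deterministic common-neighbourhood shortcut is unavailable, essentially all the internal edges of your $2k$-tuples (and of the bridges) must be random, and your first-moment estimate no longer yields a polynomial count inside a reservoir of size $n^\gamma$. The paper's framework sidesteps exactly this obstacle by passing to the auxiliary $[2n]$-vertex graph where the sets $B(v)$ satisfy $|B(v)\cap N_G(w)|\ge 4\eps n$ for \emph{every} $w$, manufacturing the codegree that $G_\alpha$ alone cannot provide.
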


It was proved by Koml{\'{o}}s, S{\'{a}}rk{\"{o}}zy, and
Szemer{\'{e}}di~\cite{KSSkthHam} that $G_\alpha$ on its own contains the
$k$th power of a Hamilton cycle, provided that $\alpha\ge k/(k+1)$ and $v(G_\alpha)$
is large enough. Bedenknecht, Han, Kohayakawa and
Mota~\cite{BHKM_powers} showed that for any $k\ge 3$ there is an $\eta$ so
that $G_\alpha\cup G(n,n^{-1/k-\eta})$ whp contains the $k$th
power of a Hamilton cycle if $\alpha>c_k$ for some absolute constant
$c_k>0$.

Bennett, Dudek, and Frieze~\cite{BenDuFrPerturbed} gave the
following lower bound.
With $G_\alpha$ the complete bipartite graph with $\alpha n$ and
$(1-\alpha)n$ vertices in the classes,
one can show that~$p$ has to be at least $n^{-1/k(1-2\alpha)}$ for $G_\alpha \cup \Gnp$ to contain the $k$th power of a Hamilton cycle.
It would be interesting to determine the optimal dependence between $\alpha$, $k$ and $\eta$ in Theorem~\ref{theorem:HC2}.

\subsection*{Organisation}
We finish this section by providing some further notation, before outlining our general embedding method for randomly perturbed graphs in Section~\ref{sec:aa}.
We then prove Theorem~\ref{theorem:HC2}, the less technical of our implementations of this method, in Section~\ref{sec:HC2}.
Theorem~\ref{theorem:main} is proved in Section~\ref{sec:max}, with the proofs of two auxiliary lemmas given in Section~\ref{sec:proofLemmata}. Finally, we make some concluding remarks and sketch how our methods can give simpler proofs of other results in the literature concerning
randomly perturbed graphs in Section~\ref{sec:conclude}.

\subsection*{Notation}
A graph $G$ has vertex set $V(G)$, edge set $E(G)$, and we let $v(G)=|V(G)|$ and $e(G)=|E(G)|$. For a vertex $v\in V(G)$, $N_G(v)$ is the set of neighbours of $v$ in $G$, and for a vertex set $A\subset V(G)$, $N_G(A)=(\cup_{v\in A}N_G(v))\setminus A$. Where no confusion is likely to occur, we simply write $N(v)$ and $N(A)$. For graphs $G$ and $H$, $G\cap H$ is the graph on vertex set $V(G)\cap V(H)$ with edge set $E(G)\cap E(H)$. For a graph $G$, and a vertex set $A\subset V(G)$, $G[A]$ is the induced subgraph of $G$ on $A$, and $G-A=G[V(G)\setminus A]$.

\section{Tools}

Our results concern the embedding of certain graphs~$F$ in $G_\alpha\cup G(n,p)$.
For obtaining such an embedding, our first step will always be to embed an almost
spanning subgraph~$F^*$ of~$F$, and our second step then (working in an auxiliary graph on $[2n]$
vertices) extends this to an embedding of~$F$.

For the second step we shall use the following hypergraph matching theorem
of Aharoni and Haxell~\cite{aharoni2000hall}. The setup will be as
follows. $F\setminus F^*$ consists of~$t$ well-separated subgraphs
$S_1,\dots,S_t$ of~$F$, and we shall encode all valid embeddings
of~$S_i$ that extend the embedding of~$F^*$ as the edges of a hypergraph~$L_i$. The goal
then is to find a hypergraph matching using exactly one edge from
each~$L_i$. A hypergraph is \emph{$r$-uniform} if each of its edges has cardinality~$r$.

\begin{theorem}[Hall's condition for hypergraphs~\cite{aharoni2000hall}]
	\label{theorem:hall}
	Let $\{ L_1, \dots, L_t\}$ be a family of $s$-uniform hypergraphs on the same vertex set.
	If, for every $\cI \subseteq [t]$, the hypergraph $\bigcup_{i \in \cI} L_i$ contains a matching of size greater than $s(|\cI|-1)$, then there exists a function $g: [t] \rightarrow \bigcup_{i=1}^t E(L_i)$ such that $g(i) \in E(L_i)$ and $g(i) \cap g(j) = \emptyset$ for $i \not= j$.
\end{theorem}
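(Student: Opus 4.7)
The plan is to prove this by induction on $t$, using an augmenting-swap argument in the spirit of Hall's theorem. The base case $t=1$ is immediate: the hypothesis with $\cI=\{1\}$ yields a matching in $L_1$ of size greater than $0$, and any edge of this matching serves as $g(1)$.

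For the inductive step I would take a partial rainbow matching $\{g(i):i\in I\}$ with $I$ a proper subset of $[t]$ and $|I|$ as large as possible, pick any missing index $j\in[t]\setminus I$, and let $C=\bigcup_{i\in I}g(i)$, so that $|C|=s|I|$. Applying the hypothesis to $\cI=I\cup\{j\}$ produces a matching $M$ in $\bigcup_{i\in\cI}L_i$ with $|M|\geq s|I|+1$; since each vertex of $C$ lies in at most one edge of $M$, at least one edge $e\in M$ is disjoint from $C$. If $e\in L_j$, then setting $g(j):=e$ contradicts the maximality of $|I|$; otherwise $e\in L_{i'}$ for some $i'\in I$, and swapping $g(i')$ for $e$ preserves the rainbow matching while freeing the $s$ vertices of the old $g(i')$ for later use.

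To turn isolated swaps into a genuine augmentation I would build a Hungarian-style rooted swap tree. The root corresponds to~$j$, and each node labelled by a currently-matched index $i'$ has children corresponding to candidate edges from various $L_{i''}$ that are disjoint from $C$ minus the vertices marked \emph{freeable} along the path to the root, where $i''$ ranges over indices not yet appearing in the tree. Reapplying the hypothesis at each growth step, with the set of indices currently present in the tree together with $j$, supplies such candidate edges by the counting above. Either the tree eventually produces an edge of $L_j$ disjoint from the vertices that must remain covered, in which case a chain of swaps strictly enlarges the rainbow matching and contradicts the maximality of $|I|$, or the tree stabilises, in which case the collection $\cI'$ of indices appearing in the tree witnesses a violation of the hypothesis for $\cI'$, again a contradiction.

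The main obstacle is the bookkeeping in the tree: the factor $s$ in $s(|\cI|-1)$ must come out exactly, so the rule for which vertices count as freeable and which subset of indices one extracts from a stabilised tree has to be calibrated so that each swap accounts for exactly $s$ vertices. Passing to a minimum counterexample, minimising for instance the total number of edges across the $L_i$, is a convenient way to ensure that every swap strictly improves a chosen potential and that the augmentation process terminates.
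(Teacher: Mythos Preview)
The paper does not prove this theorem; it is quoted from Aharoni and Haxell and used as a black box, so there is no proof in the paper to compare your attempt against.

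On its own merits, what you have written is an outline rather than a proof, and you say as much: the ``main obstacle'' --- the bookkeeping in the swap tree and the extraction of a violating set when the tree stabilises --- is precisely the content of the theorem, and you leave it open. Concretely, at a growth step you propose to apply the hypothesis to the set $\cI'$ of indices currently in the tree, obtaining a matching of size greater than $s(|\cI'|-1)$ in $\bigcup_{i\in\cI'}L_i$. But the vertex set you must avoid in order to extend the tree is (a subset of) $C=\bigcup_{i\in I}g(i)$, which has $s|I|\geq s(|\cI'|-1)$ vertices; the pigeonhole step from ``the counting above'' therefore does not produce an edge disjoint from the required set. Freeing vertices along a single root-to-node path removes only $s$ times the path length, which is too few once the tree branches. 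If instead you only try to avoid $\bigcup_{i\in\cI'\cap I}g(i)$, the edge you obtain may meet some $g(i'')$ with $i''\in I\setminus\cI'$, and there is no mechanism in your description to absorb $i''$ into the tree, since the edge need not lie in $L_{i''}$.

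The known proofs (Haxell's elementary argument and the Aharoni--Haxell topological one) do follow an augmentation philosophy, but the decisive step is a carefully designed auxiliary structure --- a sequence of blocking edges paired with indices --- whose failure to grow pins down a set $\cI$ for which every matching in $\bigcup_{i\in\cI}L_i$ has size at most $s(|\cI|-1)$. Your sketch has the right shape but lacks exactly this construction, and the minimum-counterexample remark at the end does not supply it.
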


When we want to use this theorem, we need to
verify the condition on~$L_i$. For this purpose
we shall use Janson's inequality (see, e.g., \cite[Theorem~2.18]{janson2011random}).

\begin{lemma}[Janson's inequality]
	\label{lemma:Janson}
	Let $p \in (0,1)$ and consider a family $\{ H_i \}_{i \in \cI}$ of subgraphs of the complete graph on the vertex set $[n]=\{1,\ldots,n\}$. For each $i \in \cI$, let $X_i$ denote the indicator random variable for the event that $H_i \subseteq \Gnp$ and, write $H_i \sim H_j$ for each ordered pair $(i,j) \in \cI \times \cI$ with $i \neq j$ if $E(H_i) \cap E(H_j) \not= \emptyset$.
	Then, for $X = \sum_{i \in \cI} X_i$, $\mathbb{E}[X] = \sum_{i \in \cI} p^{e(H_i)}$,
	\begin{align*}
	\delta = \sum_{H_i \sim H_j} \mathbb{E}[X_i X_j] = \sum_{H_i \sim H_j} p^{e(H_i) + e(H_j) - e(H_i \cap H_j)}
	\end{align*}
	and any $0 < \gamma < 1$ we have
	\begin{align*}
	\mathbb{P} [X \le (1-\gamma) \mathbb{E}[X]] \le \exp \left(-\frac{\gamma^2 \mathbb{E}[X]^2}{2(\mathbb{E}[X] + \delta)} \right).
	\end{align*}
\end{lemma}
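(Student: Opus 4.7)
My plan is to prove the stated lower-tail inequality by the standard exponential-moment (Chernoff-style) argument, carefully accounting for the dependency structure among the indicator variables $X_i$. Write $\mu = \mathbb{E}[X]$. For any $t > 0$, Markov's inequality applied to the nonnegative random variable $e^{-tX}$ yields
\begin{equation*}
\Pr[X \le (1-\gamma)\mu] \;=\; \Pr\bigl[e^{-tX} \ge e^{-t(1-\gamma)\mu}\bigr] \;\le\; e^{t(1-\gamma)\mu}\,\mathbb{E}\bigl[e^{-tX}\bigr],
\end{equation*}
so the whole task reduces to controlling the exponential moment $\mathbb{E}[e^{-tX}]$ from above.

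The heart of the proof is then to establish an estimate of the shape
\begin{equation*}
\mathbb{E}\bigl[e^{-tX}\bigr] \;\le\; \exp\Bigl(-(1-e^{-t})\mu + \tfrac{1}{2}(1-e^{-t})^{2}\,\delta\Bigr).
\end{equation*}
The natural route is a differential-inequality argument on $\psi(t) := \log \mathbb{E}[e^{-tX}]$, which satisfies $\psi(0) = 0$ and $\psi'(t) = -\mathbb{E}[Xe^{-tX}]/\mathbb{E}[e^{-tX}]$. To lower-bound each summand $\mathbb{E}[X_i e^{-tX}]/\mathbb{E}[e^{-tX}]$ I would write $X_i e^{-tX} = e^{-t}\,X_i \prod_{j\neq i} e^{-tX_j}$ and use the crucial observation that $X_i$ is independent of the factors $e^{-tX_j}$ for all $j$ with $H_j$ sharing no edge with $H_i$; only the pairs $H_i \sim H_j$ generate a correlation correction, and they contribute exactly $\sum_{H_i \sim H_j} \mathbb{E}[X_iX_j] = \delta$ in total. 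Integrating the resulting differential inequality from $0$ to $t$ then produces the displayed moment bound, with $\delta$ emerging as the natural quadratic term.

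To finish, substitute the moment bound into the Markov estimate and optimise. Setting $u := 1 - e^{-t} \in (0,1)$ and using $t = -\log(1-u) = u + O(u^2)$, the combined exponent takes the form $-u\gamma\mu + \tfrac{1}{2}u^{2}(\mu+\delta) + O(u^{2})$, and the choice $u = \gamma\mu/(\mu+\delta)$ (which lies in $(0,1)$ because $\gamma < 1$) delivers precisely the claimed bound $\exp\bigl(-\gamma^2\mu^2/(2(\mu+\delta))\bigr)$. The main obstacle is the exponential-moment step: the $X_i$ are not independent, and the FKG inequality offers no help because $e^{-tX_i}$ is a \emph{decreasing} function of the increasing indicator $X_i$, so FKG yields a \emph{lower} rather than the needed upper bound on $\mathbb{E}[\prod e^{-tX_i}]$. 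One must therefore localise the correlation correction to the dependency graph $i \sim j$ explicitly, which is exactly where the combinatorial expression $\delta = \sum_{H_i \sim H_j} p^{e(H_i)+e(H_j)-e(H_i \cap H_j)}$ naturally enters.
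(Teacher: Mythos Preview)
The paper does not prove this lemma at all: it is quoted as a standard tool with a reference to \cite[Theorem~2.18]{janson2011random}. So there is no ``paper's own proof'' to compare against; you are supplying a proof where the authors simply cite one.

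Your sketch follows the classical route and is essentially correct, but two points deserve tightening. First, in the optimisation step you write the exponent as $-u\gamma\mu+\tfrac12 u^{2}(\mu+\delta)+O(u^{2})$; as written the error term is of the same order as the quadratic you are trying to keep, so this is not a valid simplification. The clean way is to stay with the variable $t$: use $1-e^{-t}\ge t-t^{2}/2$ for the linear term and $1-e^{-t}\le t$ for the quadratic term to obtain
\[
\mathbb{E}\bigl[e^{-tX}\bigr]\le \exp\Bigl(-t\mu+\tfrac{t^{2}}{2}(\mu+\delta)\Bigr),
\]
whence the Markov bound has exponent $-t\gamma\mu+\tfrac{t^{2}}{2}(\mu+\delta)$, and the choice $t=\gamma\mu/(\mu+\delta)$ gives exactly $-\gamma^{2}\mu^{2}/\bigl(2(\mu+\delta)\bigr)$ with no error term.

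Second, your remark that ``FKG offers no help'' is slightly misleading. You are right that applying FKG naively to $\prod_i e^{-tX_i}$ yields only a lower bound on $\mathbb{E}[e^{-tX}]$. But the standard proofs of the moment bound (Janson's original differential-inequality argument and the Boppana--Spencer version) \emph{do} use Harris/FKG, just more subtly: one conditions on $X_i=1$, splits the remaining product over $j\not\sim i$ and $j\sim i$, and applies Harris to compare the conditional expectation over the independent block with the unconditional one. That step is exactly what makes the correlation correction localise to the dependency pairs and produces $\delta$. Your sketch gestures at this but does not carry it out; if you want a self-contained proof you should make that conditioning-plus-Harris step explicit.
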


This result will also be useful for the first step described
above, in which we embed an almost spanning subgraph. In particular,
the appearance of almost $H$-factors in $G(n,p)$ for $p\ge Cn^{-1/m_1(H)}$
is a straightforward consequence of Janson's inequality (see, e.g.,
\cite[Theorem~4.9]{janson2011random}). Here we need a minor modification of
this result. For two graphs~$H_1$ and~$H_2$, an \emph{$(H_1,H_2)$-factor} is any graph
that consists only of vertex disjoint copies of~$H_1$ and~$H_2$. The
following theorem concerning the appearance of an almost $(H_1,H_2)$-factors in
$G(n,p)$ can be proved with trivial modifications to the proof of~\cite[Theorem~4.9]{janson2011random}.

\begin{theorem}[Almost factors in $G(n,p)$]
\label{thm:factors}
  For every pair of graphs~$H_1$ and~$H_2$, and every $\eps>0$  there is a constant~$C$ such that,
  if $p\ge Cn^{-1/m_1(H_i)}$ for $i=1,2$, then for every $(H_1,H_2)$-factor $F^*$ on at most
  $(1-\eps)n$ vertices, whp $G(n,p)$ contains~$F^*$.
\end{theorem}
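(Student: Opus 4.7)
The plan is to derive $F^*$ by applying \cite[Theorem~4.9]{janson2011random} twice: first to pack the copies of $H_1$, then to pack the copies of $H_2$ on the leftover vertices. Write $F^* = F_1^* \cup F_2^*$ as the vertex-disjoint union of an $H_1$-factor $F_1^*$ with $a$ components and an $H_2$-factor $F_2^*$ with $b$ components, so that $a h_1 + b h_2 \le (1-\eps)n$ where $h_i := v(H_i)$. Let $G_1, G_2 \sim G(n, p/2)$ be independent; their union $G_1 \cup G_2$ is stochastically dominated by $\Gnp$, and since containing $F^*$ is monotone, it suffices to embed $F^*$ into $G_1 \cup G_2$ whp.

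In Step~1, I would apply \cite[Theorem~4.9]{janson2011random} to $G_1$ with target graph $H_1$ and slack $\eps/2$: taking $C$ large enough in terms of $H_1$ and $\eps$ so that $p/2 \ge C' n^{-1/m_1(H_1)}$ for the $C'$ required there, whp $G_1$ contains at least $\lfloor (1-\eps/2) n / h_1\rfloor \ge a$ pairwise vertex-disjoint copies of $H_1$. Fix any $a$ of them as the embedding of $F_1^*$, and let $V' := [n]\setminus V(F_1^*)$, so $|V'| = n - a h_1 \ge b h_2 + \eps n$.

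In Step~2, observe that $V'$ is a function of $G_1$ alone, and $G_2$ is independent of $G_1$; hence $G_2[V']$ has distribution $G(|V'|, p/2)$. Because $|V'| \ge \eps n$, one has $p/2 \ge C'' |V'|^{-1/m_1(H_2)}$ for a $C''$ that can be made arbitrarily large by choosing $C$ large. Apply \cite[Theorem~4.9]{janson2011random} to $G_2[V']$ with target graph $H_2$ and slack parameter $\eps n/|V'|$ (bounded below by a positive constant) to obtain, whp, at least $b$ pairwise vertex-disjoint copies of $H_2$ in $V'$. Combined with Step~1, this yields the desired embedding of $F^*$ into $G_1 \cup G_2$, and hence into $\Gnp$.

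The only real obstacle is arithmetic bookkeeping: choosing a single $C = C(H_1, H_2, \eps)$ that suffices for both invocations of \cite[Theorem~4.9]{janson2011random} and verifying that both slack parameters are uniformly bounded below. Both are routine since the relevant vertex set sizes lie in $[\eps n, n]$ throughout, matching the "trivial modifications" remark in the statement.
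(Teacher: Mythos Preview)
Your argument is correct. You cleanly sidestep the need to open up the proof of \cite[Theorem~4.9]{janson2011random}: two-round exposure plus two black-box applications of the single-graph almost-factor theorem does the job, and your bookkeeping is right (in particular, $|V'|=n-ah_1$ is deterministic once $a$ is fixed, $|V'|\ge bh_2+\eps n$, and the slack in the second application is bounded below by~$\eps$, so a single $C=C(H_1,H_2,\eps)$ suffices).

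The paper, by contrast, does not give a proof at all: it simply asserts that the result follows by ``trivial modifications to the proof of \cite[Theorem~4.9]{janson2011random}''. The intended route there is presumably to rerun the Janson-plus-greedy argument underlying Theorem~4.9 directly --- showing that every linear-sized vertex subset contains a copy of $H_1$ and a copy of $H_2$, then peeling off copies greedily --- rather than invoking the theorem itself. Your approach is a genuine, if minor, alternative: it treats Theorem~4.9 as a black box and uses independence of the two exposure rounds instead of reproving anything. The upside of your route is that it requires no inspection of the cited proof; the upside of the paper's hinted route is that it avoids the (admittedly standard) two-round exposure device and handles both $H_i$ in one Janson calculation. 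Either way the argument is routine, consistent with the paper's one-line treatment.
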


For our result on spanning bounded degree subgraphs we shall also use
the following result of Riordan~\cite{Riordan}, which allows the embedding
of spanning subgraphs that are not locally too dense
in $G(n,p)$. For a graph~$H$ let
\begin{align*}
  \gamma(H) = \max_{S \subseteq H, v(S) \ge 3} \frac{e(S)}{v(S)-2}\,.
\end{align*}
Riordan's Theorem can be found in the following form in~\cite{parczyk2016spanning}.
We shall use this theorem for a subgraph~$H$ of~$F$ which excludes the
`dense spots' of~$F$.

\begin{theorem}[Riordan~\cite{Riordan}]
\label{theorem:Riordan}
Let $\Delta \ge 2$ be an integer, $H \in \cF(n,\Delta)$ and $p= \omega( n^{-\frac{1}{\gamma(H)}} )$.
Then, a copy of $H$ is contained in $\Gnp$ whp.
\end{theorem}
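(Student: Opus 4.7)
The plan is to apply the second moment method to the random variable $X$ counting labelled copies of $H$ in $\Gnp$, i.e., the number of injections $\phi\colon V(H)\to [n]$ such that $\phi(uv)\in E(\Gnp)$ for every $uv\in E(H)$. Then $\EE[X] = n(n-1)\cdots(n-v(H)+1)\cdot p^{e(H)}$. Taking $S=H$ in the definition of $\gamma(H)$ gives $e(H)\leq \gamma(H)(v(H)-2)$, so the hypothesis $p=\omega(n^{-1/\gamma(H)})$ yields $n^{v(H)}p^{e(H)}\gg n^{2}$, and in particular $\EE[X]\to\infty$.

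The heart of the argument is bounding $\EE[X^2]/\EE[X]^2$. I would expand $\EE[X^2]$ as a sum over ordered pairs of injections $(\phi_1,\phi_2)$, grouped by their common substructure: the subgraph $S\subseteq H$ spanned by the vertices on which $\phi_1$ and $\phi_2$ agree, together with the edges of $H$ that are mapped to the same edges by both. For a fixed isomorphism type of $S$, the number of such pairs is at most $n^{2v(H)-v(S)}\cdot C_{H,S}$ with $C_{H,S}$ depending only on $H$, and each pair contributes $p^{2e(H)-e(S)}$ to $\EE[X^2]$. After dividing by $\EE[X]^2$ and absorbing the $v(S)\leq 2$ terms into a $1+o(1)$ factor, the remaining contribution is controlled by
\[
  \sum_{S\,:\,v(S)\geq 3} \frac{C_{H,S}}{n^{v(S)}\,p^{e(S)}}\,.
\]
Since $\gamma(H)\geq e(S)/(v(S)-2)$ for every $S$ appearing in this sum, one has $n^{v(S)}p^{e(S)}\gg n^{v(S)}\cdot n^{-(v(S)-2)} = n^{2}\to\infty$, so each individual term is $o(1)$.

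The delicate point is that $H$ spans $[n]$, so the set of candidate intersection subgraphs $S$ is not of bounded size, and a naive union bound over all $S\subseteq H$ does not close the argument. I would handle this by Riordan's strategy of computing the corresponding moments in the $G(n,m)$ model, where the negative correlation of edges yields a sharper bound, and by using the bounded-degree hypothesis $\Delta(H)\leq\Delta$ to organise the sum over $S$ by connected components: any connected intersection subgraph is confined to a bounded-degree neighbourhood in $H$, so the relevant generating series can be summed geometrically and kept at $1+o(1)$. Once $\Var(X)=o(\EE[X]^2)$ is secured in this way, Chebyshev's inequality gives $\PP[X=0]\to 0$, completing the proof. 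I expect the main obstacle to be precisely this combinatorial enumeration of all possible overlap structures $S$; it is here that the precise density $\gamma(H)$, with the $v(S)-2$ denominator rather than the more familiar $v(S)-1$, becomes essential, as it provides exactly the polynomial slack $n^{2}$ needed to overcome the blow-up in the number of overlap types.
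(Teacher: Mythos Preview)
The paper does not prove Theorem~\ref{theorem:Riordan}. It is quoted as a tool from Riordan's original paper (in the form given in~\cite{parczyk2016spanning}) and used as a black box in Section~\ref{sec:embedmost}. There is therefore no proof in the paper to compare your proposal against.

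That said, your sketch is recognisably Riordan's own argument, and the broad outline is right: second moment on the number of labelled copies, with the crucial passage to $G(n,m)$ to exploit negative correlation and kill the contribution coming from fluctuations in the total edge count, followed by a careful summation over overlap structures using $\Delta(H)\le\Delta$. Two points are worth flagging. First, your displayed estimate $n^{v(H)}p^{e(H)}\gg n^{2}$ is not the relevant quantity for $\EE[X]$, since $H$ is spanning and $\EE[X]=n!\,p^{e(H)}$; the Stirling factor $e^{-n}$ must be beaten, and it is only the assumption $p=\omega(n^{-1/\gamma(H)})$ (so that the $\omega$-factor is raised to a linear power) that rescues this. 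Second, your description of the overlap between two injections $\phi_1,\phi_2$ as ``the subgraph spanned by the vertices on which they agree'' is not the right bookkeeping: two injections can share edges without agreeing on any vertex, and the correct parametrisation is by the common image subgraph together with the two partial isomorphisms into $H$. This is exactly where the enumeration becomes delicate, and where Riordan's proof does the real work; your proposal acknowledges the difficulty but does not carry it out.
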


Finally, we shall use the following submartingale-type inequality to handle
weak dependencies in the proof of our main technical result.
A proof of this lemma can for example be found in~\cite[Lemma 2.2]{allen2016blow}.

\begin{lemma}[Sequential dependence lemma]
\label{lemma:process}
Let $\Omega$ be a finite  probability space, and let $\cF_0,\dots,\cF_m$ be partitions of $	\Omega$, with $\cF_{i-1}$ refined by $\cF_i$ for each $i \in [m]$.
For each $i \in [m]$ let $Y_i$ be a Bernoulli random variable on $\Omega$ which is constant on each part of $\cF_i$.
Let $\delta$ be a real number, $\gamma \in (0,1)$, and $X= Y_1 + \dots + Y_m$.
If $\EE [Y_i | \cF_{i-1}] \ge \delta$ holds for all $i \in [m]$, then
\begin{align*}
\PP [X \le (1-\gamma) \delta m] \le \exp \left( \frac{-\gamma^2 \delta m}{3} \right) .
\end{align*}
\end{lemma}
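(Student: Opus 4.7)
The plan is to prove this by the standard exponential-moment (Chernoff) method, adapted to the sequential conditional structure encoded by the refining partitions. The crucial observation is that, since $\cF_0,\dots,\cF_{i-1}$ are all refined by $\cF_i$, each $Y_j$ with $j\le i-1$ is constant on parts of $\cF_j$ and hence also on parts of $\cF_{i-1}$; in other words, $Y_1+\cdots+Y_{i-1}$ is measurable with respect to (the $\sigma$-algebra generated by) $\cF_{i-1}$, while only the randomness in $Y_i$ lies "beyond" $\cF_{i-1}$. This justifies pulling partial sums out of conditional expectations.

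The main technical step is a one-step bound. Fix a parameter $t>0$ to be optimised later. Because $Y_i$ takes values in $\{0,1\}$ and $\EE[Y_i\mid\cF_{i-1}]\ge\delta$,
\[
\EE\bigl[e^{-tY_i}\bigm|\cF_{i-1}\bigr] \;=\; 1 - \EE[Y_i\mid\cF_{i-1}](1-e^{-t}) \;\le\; 1-\delta(1-e^{-t}) \;\le\; \exp\!\bigl(-\delta(1-e^{-t})\bigr).
\]
Iterating this bound by conditioning successively on $\cF_{m-1},\cF_{m-2},\dots,\cF_0$ (using the measurability observation above to factor out $e^{-t(Y_1+\cdots+Y_{i-1})}$ at each step) yields
\[
\EE\bigl[e^{-tX}\bigr] \;\le\; \exp\!\bigl(-m\delta(1-e^{-t})\bigr).
\]

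The rest is the routine Chernoff finish. By Markov's inequality,
\[
\PP\bigl[X\le (1-\gamma)\delta m\bigr] \;=\; \PP\bigl[e^{-tX}\ge e^{-t(1-\gamma)\delta m}\bigr] \;\le\; \exp\!\bigl(m\delta\bigl(t(1-\gamma)-1+e^{-t}\bigr)\bigr).
\]
I would optimise by setting $t=-\log(1-\gamma)>0$, which reduces the exponent to $-m\delta\bigl((1-\gamma)\log(1-\gamma)+\gamma\bigr)$; a standard Taylor estimate shows $(1-\gamma)\log(1-\gamma)+\gamma \ge \gamma^2/3$ for $\gamma\in(0,1)$, giving the claimed bound $\exp(-\gamma^2\delta m/3)$.

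There is no serious obstacle here beyond bookkeeping. The only point that demands a little care is verifying the measurability claim that makes the telescoping conditional argument legal; once that is in place, the proof is essentially the classical lower-tail Chernoff bound for a sum of Bernoullis with a lower bound on the conditional success probability.
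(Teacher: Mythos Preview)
Your proof is correct. The paper does not actually prove this lemma; it merely states it and refers the reader to \cite[Lemma~2.2]{allen2016blow} for a proof. Your argument is exactly the standard exponential-moment (Chernoff) proof one would find in such a reference: the measurability observation you highlight is precisely what makes the tower of conditional expectations telescope, and the one-step bound together with the choice $t=-\log(1-\gamma)$ and the inequality $(1-\gamma)\log(1-\gamma)+\gamma\ge\gamma^2/2\ge\gamma^2/3$ are all correct. There is nothing to add.
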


\section{Main technical theorem}\label{sec:aa}

We start with an outline of the main idea of our strategy for embedding
some spanning graph~$F$ into $G_\alpha\cup G(n,p)$.  Recall that $G(n,p)$
has vertex set~$[n]$.  We use two-round exposure.  In the first round we
will find an $F^*$-copy for some almost spanning induced subgraph $F^*$
of~$F$. One key idea in our proof is that, by symmetry, the
$F^*$-copy we find is random among all possible $F^*$-copies in the
complete graph on vertex set~$[n]$ (see Section~\ref{sec:indep}).  Hence it remains to complete
such a random $F^*$-copy to an $F$-copy using only edges in $G_\alpha$ and
the second round (see Section~\ref{sec:comp}.).  It is the additional edges of~$G_\alpha$ in this
second step that allow us to gain on the bound for embedding~$F$ in a random
graph alone.

For the second round, we use an absorbing method, relying on the following
family of reservoir sets.

\begin{definition}[Reservoir sets]
  Given a graph $G_\alpha$ on vertex set~$[n]$, a copy $\hat F$ of a subgraph~$F^*$ of~$F$ in the complete graph on
  vertex set~$[n]$, and an independent set~$W$ of vertices of $\hat F$, we
  define the family of \emph{$(G_\alpha, \hat F, W)$-reservoir sets}
  $\big(R(u)\big)_{u\in[n]}$ by setting
  \begin{equation}\label{Bvdefn}
    R(u)=\big\{ w\in W\colon N_{\hat F}(w)\subset N_{G_\alpha}(u) \big\}\,.
  \end{equation}
\end{definition}

The crucial property of these reservoir sets is as follows. Assume that
$\hat F$ is a copy of~$F^*$ in $G(n,p)$. Then, for any vertex
$u\in[n]\setminus V(\hat F)$ exchanging~$u$ with any vertex $w\in R(u)$ gives us a different
copy of~$F^*$ in $G_\alpha\cup G(n,p)$, now using~$u$. In this case we also say that we can
\emph{switch} $u$ and~$w$.
Moreover, since~$W$ is an independent set, switching several vertices
simultaneously in this manner does not create conflicts.
As part of our proof we will show (see Lemma~\ref{lemma:Bsize}) that, for a
random $\hat F$ and a suitably chosen set~$W$, the sets $R(u)$ are likely
to have linear size intersections with neighbourhoods in $G_\alpha$.
This will give us `enough room' to complete~$\hat F$ to~$F$.

\smallskip

Next, we will state the technical embedding theorem, Theorem~\ref{newtheorem}, that formalises this method.
Theorem~\ref{theorem:HC2} and
Theorem~\ref{theorem:main} will be inferred from this result.
In our technical theorem we are given, along with~$F$, a family $\cF$ of
almost spanning subgraphs of~$F$. This family is chosen such that whp one of these subgraphs
appears in our first round and such that, in our second round whp each subgraph
in~$\cF$ can be extended to~$F$, using vertex switching. We call a set
$\cF$ with these properties suitable, defined formally as follows.

\begin{definition}[Suitability]\label{suitabledefn} Let $F$ be an $n$-vertex graph with maximum degree $\Delta$.
A set~$\cF$ of induced subgraphs of $F$ is called
\emph{$(\alpha,p)$-suitable} if, with
\begin{equation}\label{eq:eps}
  \eps =\Big(\frac{\alpha}{4\Delta}\Big)^{2\Delta}\,,
\end{equation}
each graph in $\cF$ has at least $(1-\eps)n$ vertices and the following two properties hold.
\begin{enumerate}[label=\itmit{A\arabic{*}}]
\item \label{AA1}
$\mathbb{P}(\exists F^*\in\cF\text{ with some }F^*\text{-copy in } G(n,p/2))=1-o(1)$.
\item \label{AA2}
Suppose $F^*\in \cF$ and $G$ is a graph with vertex set $[2n]$ which
contains a copy $\hat F$ of $F^*$.
For each $v\in V(F)\setminus V(F^*)$, let $B(v)\subset [2n]\setminus
V(\hat F)$ be a set such that $|B(v)\cap N_G(w)|\geq 4\eps n$  for each $w\in [2n]$.
Then whp $\hat F$ can be extended to a copy of~$F$ in
$G\cup G(2n,p/6)$
such that each vertex $v\in V(F)\setminus V(F^*)$ is mapped to a vertex in $B(v)$.
\end{enumerate}
\end{definition}

Observe that in~\ref{AA2} we consider auxiliary graphs on $[2n]$.
These encode all the information we need from $G_\alpha$ and our second
round of randomness. The sets $B(v)$ then are the corresponding auxiliary
versions of our reservoir sets. This setup, using $[2n]$, allows
us to keep the auxiliary reservoir sets disjoint from the $F^*$-copy. The
idea is, if $F^*$ can be extended to~$F$ in this auxiliary graph, then
this corresponds to a homomorphism of~$F$ in the original setting on $[n]$, and we can use switches to turn this homomorphism into an embedding.

We remark that in the proof of our first result, Theorem~\ref{theorem:HC2}
on squares of Hamilton cycles the family~$\cF$ only contains a single
graph. In the proof of Theorem~\ref{theorem:main}, however, the use of a
larger family is crucial.

\begin{theorem}[Main technical result]\label{newtheorem}
Let $\alpha>0$ and $\Delta\in\mathbb{N}$ be constant and let $p=p(n)$. If
$G_\alpha$ and~$F$ are $n$-vertex graphs such that
\begin{enumerate}[label=\rom]
\item $V(G_\alpha)=[n]$ and $\delta(G_\alpha)\ge\alpha n$,
\item $\Delta(F)=\Delta$ and
$F$ has an $(\alpha,p)$-suitable set of subgraphs $\cF$,
\end{enumerate}
then $G_\alpha\cup \Gnp$ whp contains a copy of $F$.
\end{theorem}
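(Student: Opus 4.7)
The plan is to implement the absorbing-via-switching strategy sketched in the excerpt using three rounds of exposure: write $G(n,p) = G_1 \cup G_2 \cup G_3$ with independent $G_1 \sim G(n, p/2)$ and $G_2, G_3 \sim G(n, p/6)$ (coupled so their union is contained in $G(n,p)$). The first round finds an almost-spanning copy via~\ref{AA1}; the second contributes to an auxiliary graph on $[2n]$ used to invoke~\ref{AA2}; and the third serves as the ``fresh'' random graph $G(2n, p/6)$ required by~\ref{AA2}, pulled back via a ghost-to-real correspondence.

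By~\ref{AA1}, whp $G_1$ contains a copy $\hat F$ of some $F^* \in \cF$. Using the permutation invariance of $G(n,p)$, I would post-compose with a uniform random permutation of $[n]$, absorbing it into $G_\alpha$ (whose minimum-degree condition is preserved), so that $\hat F$ may be assumed to be uniformly random and independent of $(G_2, G_3, G_\alpha)$. Fix any independent set $W_0$ of size $\Omega(n)$ in $F^*$ (possible since $\Delta(F^*) \leq \Delta$), let $W \subset V(\hat F)$ be its random image, and define the reservoir sets $R(u) = \{w \in W : N_{\hat F}(w) \subset N_{G_\alpha}(u)\}$. The central technical estimate is: whp, for every $(u,y) \in [n]^2$, $|R(u) \cap N_{G_\alpha}(y)| \geq 4\eps n$. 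For fixed $(u,y)$, each $w_0 \in W_0$ contributes to this count with probability at least $\alpha^{\Delta+1} - o(1)$ under the random placement (as $w_0$'s image must lie in $N_{G_\alpha}(y)$ and each of its $\le \Delta$ $F^*$-neighbors must lie in $N_{G_\alpha}(u)$), giving linear expected size — much larger than $4\eps n$ with the choice $\eps = (\alpha/(4\Delta))^{2\Delta}$. Concentration via Lemma~\ref{lemma:process} and a union bound over the $n^2$ pairs finishes the step.

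Next, I would build an auxiliary graph $G$ on $[2n] = [n] \sqcup V^*$ with $V^* = \{u^* : u \in [n]\}$ a ghost copy of $[n]$. The edges of $G$ consist of $\hat F$, all edges of $(G_\alpha \cup G_2)|_{[n]}$, and, for each ghost $u^* \in V^*$ and each $x \in V(G)$, the edge $u^* x$ whenever $u \bar x \in G_\alpha \cup G_2$ (with $\bar x$ the real vertex underlying $x$). For each $v \in V(F) \setminus V(F^*)$, set $B(v) := \{w^* : w \in W\}$. The reservoir estimate yields $|B(v) \cap N_G(x)| \geq 4\eps n$ for all $x \in [2n]$, verifying the hypothesis of~\ref{AA2}. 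Using $G_3$ (coupled to the ghost layer via $u^* \leftrightarrow u$) as the $G(2n, p/6)$ required by~\ref{AA2} yields whp an embedding $\phi$ of $F$ into the auxiliary graph extending $\hat F$, with each $v \in V(F) \setminus V(F^*)$ sent to some $w_v^* \in B(v)$ and the $w_v \in W$ all distinct.

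The final step is to pull $\phi$ back to a genuine $F$-embedding $\phi^{\mathrm{real}}$ in $G_\alpha \cup G(n,p)$ by performing simultaneous switches: for each $v \in V(F) \setminus V(F^*)$, pick a distinct $u_v \in [n] \setminus V(\hat F)$ with $w_v \in R(u_v)$, and set $\phi^{\mathrm{real}}(\phi_0^{-1}(w_v)) = u_v$, $\phi^{\mathrm{real}}(v) = w_v$, and $\phi^{\mathrm{real}} = \phi_0$ elsewhere on $V(F^*)$. The main technical obstacle will be to select the $u_v$'s so that every $F$-edge is realized in $G_\alpha \cup G(n,p)$: edges between two unswitched images come from $G_\alpha \cup G_2$ via the auxiliary rule, and $F^*$-edges incident to a swapped $\phi_0^{-1}(w_v)$ are certified by $w_v \in R(u_v)$, but an $F$-edge between a swapped image $u_{v''}$ and some $w_{v'}$ requires $u_{v''} w_{v'} \in G_\alpha \cup G(n,p)$, which the auxiliary edge $w_{v''} w_{v'}^*$ does not certify directly. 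Handling this will require imposing additional constraints on the $u_v$'s, forcing them to lie in certain extra $G_\alpha$-neighborhoods, and then solving a simultaneous matching problem via Theorem~\ref{theorem:hall}. The reservoir estimate $|R(u) \cap N_{G_\alpha}(y)| \geq 4\eps n$ is precisely the room needed to make this combined selection work; carrying this out carefully is the heart of the argument.
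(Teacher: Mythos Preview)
Your overall architecture matches the paper's, but the pull-back step has a genuine gap. You write ``pick a distinct $u_v \in [n]\setminus V(\hat F)$ with $w_v \in R(u_v)$'', yet the reservoir estimate only says that each $R(u)$ is large; it gives no lower bound on the dual set $\{u : w \in R(u)\} = \bigcap_{x \in N_{\hat F}(w)} N_{G_\alpha}(x)$ for a fixed $w$. Minimum degree $\alpha n$ does not control common neighbourhoods in $G_\alpha$, and even if it did you would still need this set to meet the leftover set $[n]\setminus V(\hat F)$, which has size at most $\eps n$. So the bijection $v \mapsto u_v$ may simply fail to exist, and Theorem~\ref{theorem:hall} cannot manufacture it. The edge-type you flag at the end (between a swapped $u_{v''}$ and some $w_{v'}$) is a second obstruction layered on top of this one.

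The paper removes both problems by two upstream choices you did not make. First, it \emph{pre-assigns} the leftover vertices: label $[n]\setminus V(\hat F) = \{z_v : v \in V(F)\setminus V(F^*)\}$ arbitrarily \emph{before} invoking \ref{AA2}, and set $B(v) = \{w+n : w \in R(z_v)\}$ instead of your uniform $B(v)=W^*$. Then \ref{AA2} returns $w_v \in R(z_v)$ by construction, and the swap $v \mapsto w_v$, $\phi_0^{-1}(w_v) \mapsto z_v$ needs no post-hoc matching at all. Second, it chooses $W^*$ to be $2$-independent in $F^*$ and, crucially, to have no $F$-neighbours in $V(F)\setminus V(F^*)$. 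The latter condition kills your problematic edge-type outright: every swapped-out vertex $\phi_0^{-1}(w_v)\in W^*$ has all its $F$-neighbours in $V(F^*)\setminus W^*$, hence unswapped, and those edges are certified in $G_\alpha$ directly by $w_v\in R(z_v)$. (The $2$-independence is used only to make the sets $\{w^*\}\cup N_{F^*}(w^*)$ disjoint, which is what drives the concentration via Lemma~\ref{lemma:process}; with a merely independent $W_0$ your reservoir estimate would also need repair. And the coupling should go the other way: start with $G'_2\sim G(2n,p/6)$ and project down to a graph on $[n]$ with edge probability at most $p/2$, not lift $G(n,p/6)$ up to $[2n]$.)
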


The main work for deducing our main results from this theorem will go into finding an $(\alpha,p)$-suitable family~$\cF$.
Verifying~\ref{AA1} corresponds to finding an almost spanning embedding for some $F^* \in \cF$, which is usually not too hard, because $\varepsilon n$ vertices remain uncovered.
To show~\ref{AA2}, by the definition of the $B(v)$ there is a linear number of options for the
embedding of every vertex, which makes this step again be somewhat similar to an
almost spanning embedding (and we can also use the edges of~$G$).

We will argue in Section~\ref{sec:conclude} that using this theorem we can
also easily derive short proofs for a number of related results from the literature.
We now turn to the proof of Theorem~\ref{newtheorem}.

%

\subsection{Reducing the problem to completing a random subgraph copy}\label{sec:indep}

In this section we show that, using two-round exposure and \ref{AA1}, we can reduce the
problem of embedding~$F$ in $G_\alpha\cup \Gnp$ to extending a random copy of an almost spanning subgraph.

\begin{lemma}\label{lem:reduction}
  Let $\alpha,\Delta,p$ and $G_\alpha$, $F$, and $\cF$ be as in the hypothesis of Theorem~\ref{newtheorem}.
  For each $F^*\in\cF$ let~$\hat F$ be a random $F^*$-copy in the complete
  graph on vertex set~$[n]$, and assume that
  \begin{equation}
    \label{eq:tobeshown}
    \mathbb{P}(\exists\text{ an $F$-copy in }G_\alpha \cup \hat F\cup G(n,p/2))=1-o(1)\,.
  \end{equation}
  Then $G_\alpha\cup \Gnp$ whp contains a copy of~$F$.
\end{lemma}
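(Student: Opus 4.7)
The plan is to use two-round exposure: couple $G(n,p)$ so that it contains the union $G_1\cup G_2$ of two independent copies $G_1,G_2\sim G(n,p/2)$, which is possible since $1-(1-p/2)^2\le p$. The first copy~$G_1$ will be used to locate an $F^*$-copy for some $F^*\in\cF$ via~\ref{AA1}, and $G_2$ will then play the role of the independent $G(n,p/2)$ appearing in the hypothesis~\eqref{eq:tobeshown}; hence it suffices to show $\PP(G_\alpha\cup G_1\cup G_2\supseteq F)=1-o(1)$.

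By~\ref{AA1}, the event $A:=\{\exists\,F^*\in\cF\text{ with } F^*\subseteq G_1\}$ has probability $1-o(1)$. Fix an arbitrary total order on~$\cF$, and on~$A$ let $F^*(G_1)\in\cF$ be the minimal element admitting a copy in~$G_1$, and let $\hat F$ be sampled uniformly among the $F^*(G_1)$-copies in~$G_1$. For each fixed $F^*_0\in\cF$, the event $E_{F^*_0}:=\{F^*(G_1)=F^*_0\}$ depends only on which members of~$\cF$ do and do not embed into~$G_1$, so it is invariant under the natural action of $S_n$ on vertex labels. Since $G_1\sim G(n,p/2)$ is itself $S_n$-invariant, the conditional distribution of $G_1\mid E_{F^*_0}$ is too; a short calculation (sending any two $F^*_0$-copies $H,H'\subset K_n$ to one another by a suitable $\sigma\in S_n$) then shows that $\hat F\mid E_{F^*_0}$ is uniformly distributed over all $F^*_0$-copies in~$K_n$. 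Independence of $G_2$ from $G_1$ carries over to independence from $(\hat F,E_{F^*_0})$, so $(\hat F,G_2)\mid E_{F^*_0}$ has exactly the joint distribution featuring in~\eqref{eq:tobeshown}.

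Applying~\eqref{eq:tobeshown} therefore yields $\PP(G_\alpha\cup\hat F\cup G_2\supseteq F\mid E_{F^*_0})=1-o(1)$, uniformly in $F^*_0\in\cF$; and since $\hat F\subseteq G_1$, the same bound holds for $G_\alpha\cup G_1\cup G_2\supseteq F$. Summing the disjoint events $E_{F^*_0}$ partitioning~$A$ and combining with $\PP(A)=1-o(1)$ and the coupling $G_1\cup G_2\subseteq G(n,p)$ gives the conclusion. The main obstacle is arranging the selection rule so that $E_{F^*_0}$ is $S_n$-invariant, since it is precisely this invariance that transfers the uniform law of~$\hat F$ from~$K_n$ down to~$G_1$; once this is in place the remainder is bookkeeping with conditional probabilities.
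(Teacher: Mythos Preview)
Your proof is correct and follows essentially the same approach as the paper: two-round exposure with $G_1,G_2\sim G(n,p/2)$, partitioning according to the minimal $F^*\in\cF$ admitting a copy in $G_1$, and then using the $S_n$-invariance of both $G(n,p/2)$ and these partition events to argue that a uniformly chosen $F^*$-copy in $G_1$ is in fact uniform over all $F^*$-copies in $K_n$. You spell out the symmetry argument more carefully than the paper, which compresses it to the phrase ``by symmetry each possible $F^*_i$-copy is equally likely to appear in $G_1$'' on its way to the key inequality~\eqref{eq:indepmon}.
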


\begin{proof}
  Let $G_1$ and $G_2$ be two independent copies
  of $G(n,p/2)$.
  For finding a copy of~$F$ in $G(n,p)$, we want to use the edges of~$G_1$ to find a copy of $F^*\in\cF$,
  and then use~\eqref{eq:tobeshown} to complete such a copy
  to~$F$ using the edges of~$G_2$ and~$G_\alpha$.  For the second step we
  will condition on the success of the first step. For this purpose, we define the following events.  Let $F^*_1,\dots,F^*_r$ be the graphs in~$\cF$. For each
  $1\leq i\leq r$, let $\cE_i$ be the event that there is a copy of $F_i^*$
  in $G_1$, but no copy of $F_j^*$ for every $j<i$. Note that this event is
  empty if $F_j^*$ is a subgraph of $F_i^*$ for some $j<i$. These events
  are chosen such that
  \begin{equation}
  \label{eq:sumEi}
    \sum_{i=1}^r\mathbb{P}(\cE_i)
    = \mathbb{P}(\exists i\text{ with some $F^*_i$-copy in $G_1$})
    = 1-o(1)\,,
  \end{equation}
  where the second equality uses~\ref{AA1}.

  In order to use~\eqref{eq:tobeshown} in the second step, it
  is essential that we obtain a random copy of $F^*\in\cF$ in the first
  step. Here, the crucial observation is that for each $i\in[r]$ and a
  random $F_i^*$-copy $\hat{F}_i$ in the complete graph on vertex set~$[n]$ we have
  \begin{equation}
    \label{eq:indepmon}
    \mathbb{P}(\exists\text{ an $F$-copy in }G_\alpha \cup G_1\cup G_2|\cE_i)
    \geq \mathbb{P}(\exists\text{ an $F$-copy in }G_\alpha \cup \hat{F}_i\cup G_2)\,.
  \end{equation}
  Indeed, this follows from the fact that~$G_1$ is independent of
  $G_\alpha\cup G_2$, and that, if we condition on~$\cE_i$ then~$G_1$
  contains an $F_i^*$-copy by definition and by symmetry each possible $F_i^*$-copy
  is equally likely to appear in~$G_1$.
  It follows that
  \begin{multline*}
    \mathbb{P}\big(\exists\text{ an $F$-copy in }G_\alpha \cup G(n,p)\big)\ge
    \mathbb{P}(\exists\text{ an $F$-copy in }G_\alpha \cup G_1\cup G_2) \\
    \begin{aligned}
      & \geq \sum_{i=1}^r \,\mathbb{P}(\exists\text{ an $F$-copy in }G_\alpha \cup G_1\cup G_2|\cE_i)\cdot \mathbb{P}(\cE_i)
      \\
      &\geByRef{eq:indepmon} \sum_{i=1}^r\,\mathbb{P}(\exists\text{ an $F$-copy in }G_\alpha \cup \hat{F}_i\cup G_2)\cdot \mathbb{P}(\cE_i)
      \eqByRef{eq:tobeshown} (1-o(1))\cdot\sum_{i=1}^r\mathbb{P}(\cE_i)
      \eqByRef{eq:sumEi}
      1-o(1)\,,
    \end{aligned}
  \end{multline*}
  as desired.
\end{proof}

\subsection{Completing a random subgraph copy}
\label{sec:comp}

In this section we provide the proof of our main technical theorem,
Theorem~\ref{newtheorem}. By Lemma~\ref{lem:reduction} it remains to show
that whp we can complete a random $F^*$-copy into a copy of $F$.
For this we will choose a
large $2$-independent set $W$ in the $F^*$-copy, which has no neighbours outside $F^*$ (this is with respect to $F^*$ as a subgraph of $F$), construct the
according reservoir sets, and perform switches.
Recall that a set $W$ of vertices in a graph is
called \emph{$2$-independent}, if it is independent and no pair of distinct
vertices in~$W$ have a common neighbour.
The following lemma, whose proof we defer to the end of the
section, states that these reservoir sets are well-distributed with respect
to $G_\alpha$-neighbourhoods.

\begin{lemma}
	\label{lemma:Bsize}
    Let $\alpha,\Delta,p$ and $G_\alpha$, $F$, and $\cF$ be as in
    Theorem~\ref{newtheorem}. Let $F^*\in \cF$ and let~$W^*$ be a maximally
    $2$-independent set in~$F^*$, which has no neighbours outside $F^*$. Let $\hat{F}$ be a random copy of
    $F^*$ in the complete graph on vertex set~$[n]$ and~$W$ be the image of~$W^*$
    in~$\hat F$. Then whp the $(G_\alpha, \hat F, W)$-reservoir sets
    $\big(R(u)\big)_{u\in[n]}$ satisfy
    that for each $u,v \in[n]$ we have \[|N_{G_\alpha}(v)\cap R(u)| \ge
      4\eps n\,,\]
    where $\eps=(\frac{\alpha}{4\Delta})^{2\Delta}$, as in~\eqref{eq:eps}.
\end{lemma}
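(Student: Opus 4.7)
The plan is to fix $u,v\in[n]$, write $|N_{G_\alpha}(v)\cap R(u)|$ as a sum of indicators indexed by $W^*$, lower-bound the conditional expectations by revealing the random copy one small piece at a time, concentrate the sum via Lemma~\ref{lemma:process}, and then take a union bound over the $n^2$ pairs $(u,v)$. Concretely, I would model $\hat F$ by a uniformly random injection $\phi\colon V(F^*)\to[n]$, so that $W=\phi(W^*)$ and $N_{\hat F}(\phi(w^*))=\phi(N_{F^*}(w^*))$ for each $w^*\in W^*$ (this last equality relies on $W^*$ having no $F$-neighbour outside $V(F^*)$). Then
\[|N_{G_\alpha}(v)\cap R(u)|=\sum_{w^*\in W^*}Y_{w^*},\]
where $Y_{w^*}$ is the indicator that $\phi(w^*)\in N_{G_\alpha}(v)$ and $\phi(N_{F^*}(w^*))\subset N_{G_\alpha}(u)$.

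The first auxiliary estimate will be $|W^*|\ge c_1 n$ for some $c_1=c_1(\alpha,\Delta)>0$. Let $U\subset V(F^*)$ be the vertices with no $F$-neighbour in $V(F)\setminus V(F^*)$. Since $|V(F)\setminus V(F^*)|\le \eps n$ and $\Delta(F)\le\Delta$, we have $|U|\ge(1-(\Delta+1)\eps)n$. Maximality of $W^*\subset U$ as a $2$-independent subset of $F^*$ shows that every vertex of $U$ lies within $F^*$-distance $2$ of some $w^*\in W^*$, and each such ball has size at most $1+\Delta+\Delta(\Delta-1)=\Delta^2+1$. Combining gives $|W^*|\ge|U|/(\Delta^2+1)\ge c_1 n$.

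The main technical step is the sequential revealing. Enumerate $W^*=\{w_1^*,\dots,w_t^*\}$ and set $S_i=\{w_i^*\}\cup N_{F^*}(w_i^*)$; each $|S_i|\le\Delta+1$, and the $2$-independence of $W^*$ in $F^*$ makes the $S_i$ pairwise disjoint. Let $\cF_i$ partition the sample space according to $\phi|_{S_1\cup\cdots\cup S_i}$, so that $Y_{w_i^*}$ is $\cF_i$-measurable. To keep a uniform constant lower bound on the conditional probabilities, I would truncate at $m=\min\{t,\lfloor\alpha n/(2(\Delta+1))\rfloor\}$, so at most $\alpha n/2$ positions are ever exposed. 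Conditional on any realisation of $\cF_{i-1}$ with $i\le m$, the restriction $\phi|_{S_i}$ is a uniformly random injection into at least $n/2$ unexposed vertices, of which at least $\alpha n/2$ lie in $N_{G_\alpha}(v)$ and at least $\alpha n/2$ in $N_{G_\alpha}(u)$, so a direct count yields $\EE[Y_{w_i^*}\mid\cF_{i-1}]\ge\delta:=(\alpha/2)^{\Delta+1}$.

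Finally, Lemma~\ref{lemma:process} applied with $\gamma=1/2$ to $X=\sum_{i\le m}Y_{w_i^*}$ gives $\PP[X<\delta m/2]\le\exp(-\delta m/12)=e^{-\Omega(n)}$, since $\delta m=\Omega(n)$ by the previous two steps. The choice $\eps=(\alpha/(4\Delta))^{2\Delta}$ in~\eqref{eq:eps} is small enough that the routine inequality $\delta m/2\ge 4\eps n$ holds; comparing $(\alpha/2)^{\Delta+1}$ against $(\alpha/(4\Delta))^{2\Delta}$, the former dominates by a large margin for every $\Delta\ge 2$. A union bound over the $n^2$ pairs $(u,v)$ then delivers the lemma, as $n^2 e^{-\Omega(n)}=o(1)$. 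The only place that needs care is the truncation and the tracking of conditional probabilities; the fact that the $S_i$ are disjoint (thanks to $2$-independence) is precisely what lets the sequential-revealing argument produce the clean constant lower bound $\delta$ required by Lemma~\ref{lemma:process}.
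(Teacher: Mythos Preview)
Your proposal is correct and follows essentially the same route as the paper: write $|N_{G_\alpha}(v)\cap R(u)|$ as a sum of indicators over $W^*$, reveal the random injection on the disjoint blocks $\{w_i^*\}\cup N_{F^*}(w_i^*)$ (disjointness coming from $2$-independence), bound each conditional probability below by $(\alpha/c)^{\Delta+1}$, apply Lemma~\ref{lemma:process} to a truncated sum of linear length, and finish with a union bound over $(u,v)$. The only cosmetic differences are in the constants (the paper truncates at $r=\alpha n/(3\Delta^2)$ and uses $(\alpha/4)^{\Delta+1}$, you truncate at $\alpha n/(2(\Delta+1))$ and use $(\alpha/2)^{\Delta+1}$) and in the bookkeeping for $|W^*|$.
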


This lemma in particular implies that the sets $R(u)$ are
linear in size.

\begin{proof}[Proof of Theorem~\ref{newtheorem}]
  Assume we are given graphs~$G_\alpha$ and~$F$ satisfying the assumptions and a suitable set of
  almost spanning subgraphs~$\cF$ of~$F$. Fix $F^*\in\cF$ and let~$\hat F$
  be a random copy of~$F^*$ in the complete graph on vertex set~$[n]$ and
  let~$g_0$ be the embedding that maps~$F^*$ to~$\hat F$.

  By Lemma~\ref{lem:reduction}, it suffices to
  prove~\eqref{eq:tobeshown}.
  For this purpose we will use the reservoir
  sets and~\ref{AA2}. So, let $W^*$ be a maximally
  $2$-independent set in~$F^*$, which has no neighbours outside $F^*$, let~$W$ be the image of~$W^*$
  under~$g_0$, and let $\big(R(u)\big)_{u\in[n]}$ be the
  $(G_\alpha, \hat F, W)$-reservoir sets. By Lemma~\ref{lemma:Bsize}, whp, for all
  $u,v \in[n]$ we have $|N_{G_\alpha}(v)\cap R(u)| \ge 4\eps n$.

We now start by mapping the remaining vertices of~$F$ arbitrarily to the
unused vertices $[n]\setminus V(\hat F)$. Our goal then is to use
switchings to turn this mapping into an embedding of~$F$.
So, label the vertices in $[n]\setminus V(\hat{F})$ arbitrarily as $\{z_v\colon v\in V(F)\setminus V(F^*)\}$.
In order to appeal to~\ref{AA2} we now define an auxiliary graph~$G$
on vertex set~$[2n]$ together with a collection of auxiliary reservoir
sets $B(u)$, which encode the embedding~$g_0$ of~$F^*$ and the edges
of~$G_\alpha$ as well as the
reservoir sets $R(u)$.

Let $G$ be the auxiliary graph on the vertex set $[2n]$ that contains all edges
of~$\hat F$ in addition to exactly the following edges.
For each edge $uw$ of~$G_\alpha$ the graph~$G$ contains the edges
$\{u+n,w\}$, $\{u,w+n\}$, and $\{u+n,w+n\}$.
For each $v\in V(F)\setminus V(F^*)$, we define the auxiliary reservoir set
$B(v)=\{w+n\colon w\in R(z_v)\}$.
Since $|N_{G_\alpha}(v)\cap R(u)| \ge 4\eps n$ for all $u,v \in[n]$, we have
for each $v \in V(F)\setminus V(F^*)$ and $w \in [2n]$ that $|B(v)\cap N_G(w)|\geq 4\eps n$.
So the graph~$G$ and the sets $B(v)$ fit the setup in~\ref{AA2}.

Now let $G'_2$ be a copy of $G(2n,p/6)$ on vertex set $[2n]$.
Hence, by~\ref{AA2} the following event~$\cE$ holds whp:
$\hat F$ can be extended to a copy
of~$F$ in $G\cup G'_2$ such that each $v\in V(F)\setminus V(F^*)$ is
mapped to~$B(v)$. The corresponding embedding~$g'$ of~$F$ into $G\cup G'_2$
extends~$g_0$. In particular, this $F$-copy in the auxiliary graph encodes
which vertices get switched where (as we detail below).

Now we need to translate this back to our original setting
on~$n$ vertices. For this, let~$G_2$ be the graph on vertex set $[n]$ and
with all edges $uw$ such that $\{u, w+n\}$, $\{u+n, w\}$ or $\{u+n, w+n\}$ is an edge in $G'_2$.
Hence~$G_2$ is distributed as a random graph in which each edge appears independently and
with probability at most $p/2$. Therefore, in order to
show~\eqref{eq:tobeshown} it is sufficient to prove that whenever the
event~$\cE$ holds for~$G'_2$, then there also is an $F$-copy in $G_\alpha \cup \hat{F}\cup G_2$.

Indeed, assume that~$\cE$ holds and define for each $v\in V(F)$
\[
g(v)= \begin{cases}
\, g'(v)-n & \text{ if }v\in V(F)\setminus V(F^*),\\
\,z_u & \text{ if }g'(v)=g'(u)-n\text{ for some } u \in V(F)\setminus V(F^*),\\
\,g'(v) & \text{ otherwise}\,.\\
\end{cases}
\]
In other words, the first line states that all vertices~$v$ in $V(F)\setminus V(F^*)$, which by the
definition of~$B(v)$ are embedded by~$g'$ in $[2n]\setminus[n]$, are mapped by~$g$ to the corresponding vertex in~$[n]$.
The third line guarantees that vertices~$v$ in $V(F^*)$ usually are
embedded by~$g'$ as by~$g$, unless this creates a conflict with the rule
from the first line for a vertex~$u$, in which case they are switched
to~$z_u$ by the second line.

We claim that $g$ is an embedding of $F$ into $G_\alpha \cup \hat{F}\cup G_2$.
To see this, let
\[
Z_0=V(F)\setminus V(F^*)\text{\,\, and \,\,}Z_1=\{v: g'(v)=g'(u)-n\text{ for some } u \in Z_0 \}.
\]
Note that $g$ agrees with $g'$ outside of $Z_0\cup Z_1$, so that $g$
(appropriately restricted) is an embedding of $F-(Z_0\cup Z_1)$ into $G_\alpha \cup \hat{F}$.
Now consider any $v\in Z_1$ and let $u \in Z_0=V(F)\setminus V(F^*)$ be
such that $g'(v)=g'(u)-n$. Since~$u$ is embedded by~$g'$ into
$B(u)=\{w+n\colon w\in R(z_u)\}$, we have
$g'(v)=g'(u)-n\in R(z_u)$. Recall that
$R(z_u)\subset W$ by the definition of the reservoir sets, and~$W$ is the
image under $g'$ of~$W^*$.
We conclude that
$Z_1\subset W^*$, that is, $Z_1$ is $2$-independent and has no
neighbours outside~$F^*$. It follows
that vertices in $Z_1$ have no $F$-neighbours in $Z_0$ or $Z_1$.
Thus, for each $v\in Z_1$,
\[
g\big(N_F(v)\big)=g'\big(N_F(v)\big)=N_{\hat{F}}\big(g'(v)\big)\subset N_{G_\alpha}(z_u)\,,
\]
where the last step uses $g'(v)\in R(z_u)$.
This shows that vertices in~$Z_1$ are properly embedded by~$g$.

It remains to consider vertices $v \in Z_0$. We prove that all neighbours
of~$v$ are mapped to neighbours of $g(v)$, distinguishing three cases.
Firstly, for $u\in N_{F}(v)\setminus ( Z_0 \cup Z_1)$,
there is an edge between $g(v)=g'(v)-n$ and $g(u)=g'(u)$ in $G_\alpha \cup
G_2$, because there is an edge between $g'(v)$ and $g'(u)$
in $G \cup G'_2$.
Secondly, for $u\in N_{F}(v)\cap Z_0$, there is an edge between
$g(v)=g'(v)-n$ and $g(u)=g'(u)-n$ in $G_\alpha \cup G_2$, because there is
an edge between $g'(v)$ and $g'(u)$ in $G \cup G'_2$. Finally,
$N_{F}(v)\cap Z_1$ is empty, because vertices in~$Z_1$ do not have any
$F$-neighbours in~$Z_0$.

We conclude that~$g$ is an embedding of~$F$ into
$G_\alpha \cup \hat{F}\cup G_2$, completing the proof of
Theorem~\ref{newtheorem}.
\end{proof}

It remains to prove Lemma~\ref{lemma:Bsize}, which is based on the fact
that the reservoir sets~$R(u)$ are
{random} sets.

\begin{proof}[Proof of Lemma~\ref{lemma:Bsize}]
  Note that, as $F$ has maximum degree at most $\Delta$, we have
  \[
    |W^*|\geq (|F^*|-\Delta|V(F)\setminus V(F^*)|)/\Delta^2\geq n/(2\Delta^2).
  \]
  Let~$g_0$ be the (random) mapping of~$F^*$ to~$\hat F$, and observe that, by symmetry, $W=g_0(W^*)$
  is a uniformly random set of size~$|W^*|$ in~$[n]$.

  Fix $u,v\in V(G_\alpha)$. For each $w^*\in W^*$,  note that $|N_{F^*}(w^*)|\leq \Delta$ and that
  the sets $\{w^*\}\cup N_{F^*}(w^*)$ are all disjoint.  Let $I_{w^*}$
  be the indicator variable for the event $g_0(w^*)\in N_{G_\alpha}(v)$ and
  $N_{\hat{F}}\big(g_0(w^*)\big)\subseteq N_{G_\alpha}(u)$.
  Since by definition $R(u)=\{w\in W\colon N_{\hat{F}}(w)\subset
  N_{G_\alpha}(u)\}=\{g_0(w^*)\colon w^*\in W^*, N_{\hat{F}}\big(g_0(w^*)\big)\subseteq N_{G_\alpha}(u)\}$,
  it follows that
  \begin{equation}\label{yesthisone}
    |N_{G_\alpha}(v)\cap R(u)|=\sum_{w^*\in W^*}I_{w^*}\,.
  \end{equation}
  Let $r= \frac{\alpha n}{3\Delta^2}\leq |W|$ and pick distinct vertices
  $w^*_1,\ldots, w^*_r$ in $W^*$. Consider revealing the random copy~$\hat{F}$ by,
  firstly, revealing the mapping of vertices in $\{w^*_1\}\cup N_{F^*}(w^*_1)$, then revealing the
  mapping of vertices in $\{w^*_2\}\cup N_{F^*}(w^*_2)$, and so on, until $\{w^*_r\}\cup N_{F^*}(w^*_r)$,
  before finally revealing the rest of the vertices in $\hat{F}$. Note that, for each
  $1\leq i\leq r$, when the location of the vertices in $\{w^*_i\}\cup N_{\hat{F}}(w^*_i)$
  is revealed there are at least $\alpha n/2$ vertices both in
  $N_{G_\alpha}(u)$ and $N_{G_\alpha}(v)$ which are not occupied by a vertex
  in $\{w^*_j\}\cup N_{F^*}(w^*_j)$ with $j<i$. Hence, for each $1\leq i\leq r$, if
  $m=|N_{F^*}(w^*_i)|$ and $\cH_i$ is the history of the location of the
  vertices in $\{w^*_j\}\cup N_{F^*}(w^*_j)$ with $j<i$, then
  \begin{equation}\label{otherone}
    \mathbb{E}(I_{w^*_i}|\cH_i)\geq \frac{\alpha n/2\cdot \binom{(\alpha
        n/2)-1}{m}}{n\binom{n}{m}}\ge\left(\frac{\alpha}{4}\right)^{m+1}
   \geq \Big(\frac{\alpha}{4}\Big)^{\Delta+1}\,.
  \end{equation}
  Therefore, by~\eqref{yesthisone} and Lemma~\ref{lemma:process} applied
  with $\delta=(\frac{\alpha}{4})^{\Delta+1}$, we have
  \[
    |N_{G_\alpha}(v)\cap R(u)|\geq 3\delta r/4\geq \frac{\alpha^{\Delta+2}n}{4^{\Delta+2}\Delta^2}
    \geq 4\Big(\frac{\alpha}{4\Delta}\Big)^{2\Delta}n=4\eps n\,,
  \]
  with probability $1-\exp(-\Omega(\delta r))=1-o(n^{-2})$.
  Using a union bound, we conclude that with probability $1-o(1)$ for each
  $u,v\in V(G_\alpha)$ we have $|N_{G_\alpha}(v)\cap R(u)|\geq 4\eps n$.
\end{proof}


\section{Powers of Hamilton Cycles}
\label{sec:HC2}

Let $F=C^{(k)}_n$ be the $k$th power of the cycle with $n$ vertices, and let
$P^{(k)}_n$ denote the $k$th power of a path with $n$ vertices.
To prove Theorem~\ref{theorem:HC2}, it is sufficient, by
Theorem~\ref{newtheorem} to find an $\eta=\eta(\alpha)>0$, such that there
exists an $(\alpha,p)$-suitable set~$\cF$ of subgraphs of~$F$ with \[p=n^{-1/k-\eta}\,.\]
In fact, we will use only one subgraph, which will consist of disjoint copies
of the $k$th power of long (but constant length) paths, which we connect by
shorter $k$th powers of paths to form a copy of~$F$.

In the following we shall explain how we choose~$\cF$, and show that~$\cF$
satisfies~\ref{AA1} and~\ref{AA2} for $p=n^{-1/k-\eta}$, which implies that $\cF$ is
$(\alpha,p)$-suitable. We use the following constants.
Given~$k$ and~$\alpha>0$, let $\Delta=2k$ and $\eps = (\frac{\alpha}{4 \Delta})^{2\Delta}$.
Pick large integers $m$ and $\ell$, and a small constant $\eta>0$ such that
\[
\alpha , \frac{1}{k}\gg \frac{1}{\ell}\gg\frac{1}{m}  \gg \eta>0\,,
\]
where, for example, by $\frac{1}{m}  \gg \eta$ we mean that the following
proof works if we choose $\eta$ sufficiently small compared to $1/m$. In
particular, we require $\ell^2\le\eps m$.

\subsection{Choosing \texorpdfstring{$\cF$}{F}}

Let $\cF$ solely contain $F^*$, the following
$(P^{(k)}_m, P^{(k)}_{m+1})$-factor on at least $(1-\eps)n$ vertices,
which is a subgraph of~$F$.  Let~$s$ and~$t$ be the unique integers such
that $n=s(m+\ell)+t$ and $t<(m+\ell)$. Let~$F^*$ be the graph on
$v(F^*)=sm+t=t(m+1)+(s-t)m$ vertices consisting of the following vertex
disjoint $k$th powers of paths: $t$ copies of $P_{m+1}^{(k)}$, which we
denote by $P_1^*,\dots,P_{t}^*$, and $s-t$ copies of $P_{m}^{(k)}$, which
we denote by $P_{t+1}^*,\dots,P_s^*$. This leaves exactly
$v(F)-v(F^*)=s\ell\le s\eps m\le\eps n$ vertices of~$F$ uncovered.

Observe that we obtain~$F$ from~$F^*$ by connecting for each $i\in[s]$ the
paths $P_i^*$ and $P_{i+1}^*$ (respectively $P_1^*$ if $i=s$)
by a $k$th power of a path  with $\ell$ vertices, which we denote by $w_{i,1}^*,\dots,w_{i,\ell}^*$, such that the following is satisfied.
For $i\in[s]$ let $u^*_{i,1},\dots,u^*_{i,k}$ be the end $k$-tuple of
$P_i^*$ and $v^*_{i,1},\dots,v^*_{i,k}$ be the start $k$-tuple of
$P_{i+1}^*$ (respectively $P_1^*$ if $i=s$). We require that
\[u^*_{i,1},\ldots,u^*_{i,k},w^*_{i,1},\ldots,w^*_{i,\ell},v^*_{i,1},\ldots,v^*_{i,k}\]
is the $k$th power of a path with $\ell+2k$ vertices.


\subsection{Proof that~\texorpdfstring{$\cF$}{F} satisfies \texorpdfstring{\ref{AA1}}{(A1)}}
We use Theorem~\ref{thm:factors} to find a copy of~$F^*$ in $G(n,p/2)$.
Since for $m'\ge 2k$ we have $e(P^{(k)}_{m'})=km'-\binom{k+1}{2}$,
it is easy to check that for $k\ge 2$ we have $m_1(P^{(k)}_m),m_1(P^{(k)}_{m+1})<k$. Since~$F^*$ is an $(P^{(k)}_m, P^{(k)}_{m+1})$-factor on at
most $(1-\eps)n$ vertices, it follows directly from Theorem~\ref{thm:factors} that
$G(n,p/2)$ contains a copy of~$F^*$, and hence \ref{AA1} holds for $\cF$.

\subsection{Proof that~\texorpdfstring{$\cF$}{F} satisfies \texorpdfstring{\ref{AA2}}{(A2)}}\label{sec:second_stage}
Suppose that $G$ is a graph with vertex set $[2n]$ which contains a copy~$\hat F$ of~$F^*$.
For each $v\in V(F)\setminus V(F^*)$, assume we are given a set $B(v)\subset [2n]\setminus V(\hat
F)$ such that for each $w\in [2n]$ we have $|B(v)\cap N_G(w)|\geq 4\eps n$. Let $G'=G(2n,p/6)$.
Our goal is to extend~$\hat F$ to a copy of~$F$ in $G\cup G'$ such that
each vertex~$v$ in $V(F)\setminus V(F^*)$ is mapped to $B(v)$.

For each $i\in [s]$ and $j\in[k]$, let $u_{i,j}$ be the image of
$u^*_{i,j}$ in~$\hat F$, and $v_{i,j}$ be the image of $v^*_{i,j}$ in~$\hat F$.
Hence, to extend $\hat F$
to a copy of $F$ we need to embed all vertices $w^*_{i,j}$ with $i\in[s]$ and
$j\in [\ell]$,
to distinct vertices $w_{i,j}$ so that
\begin{equation}\label{kpath}
u_{i,1},\ldots,u_{i,k},w_{i,1},\ldots,w_{i,\ell},v_{i,1},\ldots,v_{i,k}
\end{equation}
is the $k$th power of a path with $2k+\ell$ vertices.

We would like to appeal to Hall's condition for hypergraphs, Theorem~\ref{theorem:hall}, to show that this is
possible. For this purpose we define the following auxiliary hypergraphs.
Let $W=[2n]\setminus V(\hat{F})$. For each $i\in [s]$, let $L_i$ be the
$\ell$-uniform hypergraph with vertex set $W$ where $e\in \binom{W}{\ell}$
is an edge exactly if there is some ordering of $e$ as
$w_{i,1},\ldots,w_{i,\ell}$ so that~\eqref{kpath} is the $k$th power of a
path in $G\cup G'$ and $w_{i,j}\in B(w^*_{i,j})$ for each $j\in[\ell]$.
We shall argue that the following lemma, whose proof we defer to
Section~\ref{extendnewproof}, guarantees that the assumption of
Theorem~\ref{theorem:hall} is satisfied.

\begin{lemma}\label{extendnew} For each $r\in [s]$ and $A\subset [s]$ with $|A|=r$ and $U\subset W$ with $|U|\leq \ell^2r$, the following holds with probability at least $1-\exp(-\omega(r\log n))$. There exists some $i\in A$ and an edge $e\in E(L_i)$ with $V(e)\subset W\setminus U$.
\end{lemma}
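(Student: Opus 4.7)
The plan is to apply Janson's inequality (Lemma~\ref{lemma:Janson}) to a random variable $X$ counting potential fillings of the gap paths across all $i \in A$; a positive value of $X$ will certify that $L_i$ has an edge contained in $W\setminus U$ for some $i \in A$.

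For each $i \in A$, I would build a family $\mathcal{O}_i$ of ordered tuples $O = (w_{i,1},\ldots,w_{i,\ell})$ of distinct vertices in $W\setminus U$ satisfying $w_{i,j} \in B(w^*_{i,j})$ for every $j$ and $w_{i,j}w_{i,j+1} \in E(G)$ for every $j \in [\ell-1]$. The neighborhood hypothesis $|B(v)\cap N_G(w)| \ge 4\eps n$ together with $|U| \le \ell^2 r \le \eps n$ (using $\ell^2 \le \eps m$) yields at least $3\eps n$ options per slot, so $|\mathcal{O}_i| \ge (3\eps n)^\ell$. For each $O \in \mathcal{O}_i$, let $I_O$ indicate that every required edge of~\eqref{kpath} not already in $G$ appears in $G'$: since the $\ell - 1$ consecutive $w$-edges are contributed by $G$ by construction, $\mathbb{E}[I_O] \ge (p/6)^{N-\ell+1}$ with $N = k\ell + \binom{k+1}{2}$. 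Setting $X = \sum_{i\in A}\sum_{O\in\mathcal{O}_i} I_O$ and substituting $p = n^{-1/k-\eta}$, the hierarchy $\alpha,1/k \gg 1/\ell \gg 1/m \gg \eta$ delivers $\mathbb{E}[X] \ge r\cdot n^{c}$ for some constant $c = c(k,\ell) > 0$, polynomially large in $n$.

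The hard part will be bounding the Janson correlation term $\delta$, defined as $\sum \mathbb{E}[I_O I_{O'}]$ over ordered pairs of distinct orderings whose required random-edge sets intersect. Cross-gap pairs ($i \neq j$) are relatively benign because the fixed $u$- and $v$-vertices of distinct gaps are disjoint, forcing any shared random edge to run between shared $W$-vertices, a coincidence that costs a factor $O(n^{-1})$ and restricts the shared requirement to at most $O(1)$ edges per pair. The principal obstacle is same-gap pairs that share a $W$-vertex at a boundary position (within distance $k$ of the $u$- or $v$-ends of the gap), where up to $\Theta(k)$ required edges can coincide and contribute a correlation factor $p^{-\Theta(k)} = n^{\Theta(1)+O(k\eta)}$ that threatens the $O(n^{-1})$ saving from the vertex coincidence. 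Overcoming this requires a refined stratified analysis of boundary configurations, using the slack $n^{O(k\eta)}$ permitted by the smallness of $\eta$ and the abundance of interior positions (ensured by $\ell$ large) to absorb the excess correlation, so that the Janson ratio $\mathbb{E}[X]^2/(\mathbb{E}[X]+\delta)$ remains $\omega(r\log n)$. Janson's inequality then yields
\[
  \mathbb{P}(X=0) \le \exp\!\Big(-\tfrac{\mathbb{E}[X]^2}{2(\mathbb{E}[X]+\delta)}\Big) \le \exp(-\omega(r\log n)),
\]
which is the desired bound.
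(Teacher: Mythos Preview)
Your overall plan—apply Janson's inequality to a family of candidate fillings in which the short-range edges are supplied by $G$ and the remaining $k$th-power edges by $G'$—matches the paper. However, the specific way you route the $G$-edges is not the paper's, and this is exactly where your proposal breaks.

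You take \emph{all} consecutive edges $w_{i,1}w_{i,2},\dots,w_{i,\ell-1}w_{i,\ell}$ from $G$ and leave every edge between the $w$'s and the anchor tuples $u_{i,1},\dots,u_{i,k}$ and $v_{i,1},\dots,v_{i,k}$ to $G'$. Consider then two tuples $O,O'\in\mathcal{O}_i$ that agree only in the first slot $w_{i,1}$. They share the $k$ random edges $u_{i,1}w_{i,1},\dots,u_{i,k}w_{i,1}$, so this overlap contributes $q^{-k}$ to the correlation while saving only a single free vertex. With $p=n^{-1/k-\eta}$ this gives $q^{-k}n^{-1}=\Theta(n^{k\eta})$, and summing over such pairs yields, for this term alone,
\[
\frac{\delta}{\mu^2}\;\ge\;\frac{c}{r}\,n^{k\eta},
\qquad\text{hence}\qquad
\frac{\mu^2}{\mu+\delta}\;\le\;\frac{\mu^2}{\delta}\;=\;O\big(r\,n^{-k\eta}\big)=o(r).
\]
Since $\eta>0$ is a fixed constant, this is \emph{not} $\omega(r\log n)$; Janson therefore cannot give $\exp(-\omega(r\log n))$. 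Your suggestion to ``absorb'' the factor $n^{k\eta}$ via stratification cannot succeed: it is a single, unavoidable term in $\delta$ with the wrong sign in the exponent, and no redistribution among strata changes that. (The symmetric problem occurs at the $v$-end with $w_{i,\ell}$.)

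The paper avoids this by routing the $G$-edges from \emph{both} ends toward the middle: it chooses $w_{i,1}\in B(w^*_{i,1})\cap N_G(u_{i,k})$, then $w_{i,2},\dots,w_{i,j}$ each in the $G$-neighbourhood of its predecessor, and independently $w_{i,\ell}\in B(w^*_{i,\ell})\cap N_G(v_{i,1})$, then $w_{i,\ell-1},\dots,w_{i,j+1}$ from the other side, leaving only the single middle edge $w_{i,j}w_{i,j+1}$ to $G'$. The point is that now $u_{i,k}w_{i,1}$ and $w_{i,\ell}v_{i,1}$ are both \emph{deterministic}, so a pair sharing just $w_{i,1}$ (or just $w_{i,\ell}$) shares only $k-1$ random edges, giving $q^{-(k-1)}n^{-1}=n^{-1/k+(k-1)\eta}=o(\log^{-1}n)$, and the Janson computation goes through. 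Your one-sided $G$-path cannot achieve this: forcing both endpoint edges into $G$ would require $w_{i,\ell}\in N_G(w_{i,\ell-1})\cap N_G(v_{i,1})$, and the hypothesis on $G$ gives no control over such double neighbourhoods.
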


The property in Lemma~\ref{extendnew} fails for some $r\in [s]$ and $A\subset [s]$ with $|A|=r$ and $U\subset W$ with $|U|\leq \ell^2r$ with probability at most
\[
\sum_{r\in [s]}\binom{s}{r}\binom{2n}{\ell^2 r} \cdot\exp(-\omega(r\log n))=o(1),
\]
so we may assume the property holds for all such sets.

To apply Theorem~\ref{theorem:hall}, we need to show that, for every $A \subseteq [s]$, the hypergraph $\bigcup_{i\in A} L_i$ contains a matching with size greater than $\ell(|A|-1)$.
Indeed, let $A\subseteq [s]$ and $r=|A|$, and let $U$ be the vertex set of a
maximal matching in $\bigcup_{i\in A} L_i$. This means that there is no
$i\in A$ and edge $e\in E(L_i)$ with $V(e)\subseteq
W\setminus U$. Thus, by the property from
Lemma~\ref{extendnew}, we have $|U|\geq \ell^2|A|$, so that
$\bigcup_{i\in A} L_i$ contains a matching with size at least
$\ell|A|$. Therefore, we can apply Theorem~\ref{theorem:hall},
and obtain a function
$\pi:[s]\to\bigcup_{i\in [s]}E(L_i)$ such that $\pi(i)\in E(L_i)$ for each
$i\in [s]$ and the edges in $\pi([s])$ are vertex disjoint.
Observe that, by the definition of the hypergraphs~$L_i$, embedding
the vertices $w^*_{i,1},\ldots,w^*_{i,\ell}$ to the vertices of $\pi(i)$ in
an appropriate order yields the desired completion of~$\hat F$ to an
embedding of~$F$. Thus, subject only to the proof of
Lemma~\ref{extendnew},~\ref{AA2} holds as required.

\subsection{Proof of Lemma~\ref{extendnew}}\label{extendnewproof} We will
prove Lemma~\ref{extendnew} using Janson's inequality,
Lem\-ma~\ref{lemma:Janson}. Recall that the hyperedges of each
hypergraph~$L_i$ represent legitimate connections in $G\cup G'$ between the
images of the $k$th power of paths
$P_i^*$ and $P_{i+1}^*$ in $\hat F$.

\begin{proof}[Proof of Lemma~\ref{extendnew}]
Fix $r\in [s]$ and $A\subset [s]$, $U\subset W$ with $|A|=r$ and $|U|\leq \ell^2r\leq \ell^2s\leq \eps n$. Let $j =\lfloor \ell/2\rfloor$.
Let $P$ be the $k$th power of the path with vertex set
\begin{equation}\label{Porder}
u_{1},\ldots,u_{k},w_{1},\ldots,w_{\ell},v_{1},\ldots,v_{k},
\end{equation}
with all the edges between the vertices $u_i$ removed and all the edges between the vertices $v_i$ removed. Furthermore, remove from $P$ the edges $u_kw_1$, $w_{\ell}v_1$ and all the edges $w_iw_{i+1}$, $i\in [\ell-1]$, except for $w_j w_{j+1}$. The edges that we have removed will come from the deterministic graph $G$, while we will find a copy of $P$ in $G'$. The edge $w_j w_{j+1}$ is included in $P$ so that we do not need to find a path between $v_{k}$ and $w_1$ in $G$.

To simplify our calculations for the application of Janson's inequality let us first prove three simple claims concerning the density of subgraphs of $P$. Let $U=\{u_{1},\ldots,u_{k}\}$ and $V=\{v_{1},\ldots,v_{k}\}$.
\begin{claim}\label{claimpath0} $e(P)\leq \ell(k-1/2)$.
\end{claim}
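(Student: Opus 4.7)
The plan is to prove the bound by a direct count, since $P$ is obtained from the $k$th power of the path on the ordered vertex set~\eqref{Porder} of length $2k+\ell$ by deleting an explicit list of edges. First I would recall the standard fact that the $k$th power of a path on $N\ge k+1$ vertices has exactly $kN-\binom{k+1}{2}$ edges (the first $N-k$ vertices each contribute $k$ forward edges, and the last $k-1$ vertices contribute $\binom{k}{2}$ further edges among themselves). Applied with $N=2k+\ell$, this gives a starting count of $2k^2+k\ell-\binom{k+1}{2}$.

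Next I would subtract the edges that are removed. All $\binom{k}{2}$ pairs inside $\{u_1,\dots,u_k\}$ and all $\binom{k}{2}$ pairs inside $\{v_1,\dots,v_k\}$ lie in the $k$th power (their index distance in~\eqref{Porder} is at most $k-1$) and are deleted; the two extra single edges $u_kw_1$ and $w_\ell v_1$ are also deleted; and of the $\ell-1$ consecutive edges $w_iw_{i+1}$, all but $w_jw_{j+1}$ are removed, contributing a further $\ell-2$. In total, the number of deleted edges is $2\binom{k}{2}+2+(\ell-2)=k(k-1)+\ell$.

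Subtracting, $e(P)=2k^2+k\ell-\binom{k+1}{2}-k(k-1)-\ell=\binom{k+1}{2}+\ell(k-1)$. The claimed bound $e(P)\le\ell(k-1/2)$ then reduces to $\binom{k+1}{2}\le\ell/2$, i.e. $\ell\ge k(k+1)$, which holds by the hierarchy $1/\ell\ll 1/k$ fixed at the start of Section~\ref{sec:HC2}. The entire argument is pure bookkeeping and presents no real obstacle; the only point worth double-checking is that each edge listed for removal is genuinely present in the $k$th power, which is immediate as the relevant pairs of vertices are at distance at most $k$ in the ordering~\eqref{Porder}. The role of the claim is to give a clean linear-in-$\ell$ upper bound on $e(P)$ that will feed the Janson calculation in the proof of Lemma~\ref{extendnew}, where $e(P)/\ell$ will be compared against the edge probability $p=n^{-1/k-\eta}$.
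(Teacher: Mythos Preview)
Your proof is correct. You compute $e(P)$ exactly as $\binom{k+1}{2}+\ell(k-1)$ and then invoke $\ell\gg k$ to obtain the bound; the arithmetic checks out.

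The paper takes a quicker, cruder route: rather than counting exactly, it observes that in the ordering~\eqref{Porder}, once the listed edges are removed and ignoring the single kept edge $w_jw_{j+1}$, every vertex has at most $k-1$ neighbours to its right. This immediately gives $e(P)\le(\ell+2k)(k-1)+1$, and then $\ell\gg k$ finishes as in your argument. Your approach has the virtue of producing the exact value of $e(P)$, but the paper's one-line degree bound avoids tracking the individual deletions and is slightly more in keeping with the rough estimates used in Claims~\ref{claimpath2} and~\ref{claimpath} that follow. Both are entirely elementary and both rely on the same hierarchy $1/\ell\ll 1/k$ at the end.
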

\begin{claimproof}[Proof of Claim~\ref{claimpath0}] In the ordering of the
  vertices in $P$ in \eqref{Porder}, ignoring the edge $w_jw_{j+1}$,
  each vertex has at most $k-1$ neighbours to the right. Therefore,
  including the edge $w_jw_{j+1}$, we have $e(P)\leq (\ell+2k)(k-1)+1\leq \ell(k-1/2)$, since we chose $\ell\gg k$.
\end{claimproof}

\begin{claim}\label{claimpath2}
For each subgraph $P'\subset P-(U\cup V)$ with $e(P')\geq 1$, we have $p^{- e(P')}\cdot n^{1-v(P')}=o(\log^{-1} n)$.
\end{claim}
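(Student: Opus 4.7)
The plan is to substitute $p=n^{-1/k-\eta}$ and reduce the claim to an inequality on the exponent of $n$: we have
\[
p^{-e(P')}\cdot n^{1-v(P')}=n^{(1/k+\eta)e(P')+1-v(P')},
\]
so it suffices to prove that $(1/k+\eta)e(P')+1-v(P')\le -c$ for some positive constant $c=c(k)$, since this gives the polynomial decay $n^{-c}$, which is far stronger than the claimed $o(\log^{-1}n)$ bound.

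The main combinatorial task is a bound on $e(P')$ in terms of $v(P')$. The graph $P-(U\cup V)$ has vertex set $\{w_1,\dots,w_\ell\}$ and, by the construction in Section~\ref{sec:second_stage}, contains the edge $w_iw_{i'}$ exactly when either $2\le |i-i'|\le k$ or $\{i,i'\}=\{j,j+1\}$. Writing $V(P')=\{w_{i_1},\dots,w_{i_v}\}$ with $i_1<\cdots<i_v$ and setting $g_a=i_{a+1}-i_a$, I will count right-edges from each $w_{i_a}$ (edges $\{w_{i_a},w_{i_b}\}$ with $b>a$) within $P'$. If $g_a\ge 2$, then $i_{a+j}\ge i_a+j+1$, so the constraint $i_b-i_a\le k$ forces $b\le a+k-1$, giving at most $k-1$ right-neighbours. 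If $g_a=1$, the distance-one edge $\{w_{i_a},w_{i_{a+1}}\}$ is absent unless $\{i_a,i_{a+1}\}=\{j,j+1\}$, and the right-neighbours with $b-a\ge 2$ again number at most $k-1$. Summing over $a\in[v-1]$ and counting the special edge $w_jw_{j+1}$ at most once yields
\[
e(P')\le (k-1)(v(P')-1)+1.
\]

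Plugging this bound into the exponent gives the upper estimate
\[
-(v(P')-1)/k+1/k+\eta\bigl[(k-1)(v(P')-1)+1\bigr],
\]
and the scaling $\eta\ll 1/k^2$ from the paper's constant hierarchy makes the $\eta$-term negligible. For $v(P')\ge 3$ the term $-(v(P')-1)/k$ dominates and the exponent is at most $-\tfrac{1}{2k}$, say. The boundary case $v(P')=2$ must be handled separately, because the uniform bound $e(P')\le (k-1)(v(P')-1)+1$ is too loose there; but in this case $e(P')=1$ and the exponent is simply $1/k+\eta-1\le -\tfrac{k-1}{k}+\eta$, again a negative constant for $k\ge 2$. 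The only mildly subtle point in the whole argument is the extra $+1$ in the bound on $e(P')$ arising from the exceptional retained edge $w_jw_{j+1}$, which is precisely what forces the $v(P')=2$ case to be treated by hand; once that is done the exponent estimate is a short algebraic manipulation and I do not anticipate any serious obstacle.
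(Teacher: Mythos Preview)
Your proof is correct and follows essentially the same approach as the paper: both establish the combinatorial bound $e(P')\le (k-1)(v(P')-1)+1$ by counting right-neighbours (with the retained edge $w_jw_{j+1}$ contributing the extra $+1$), handle $v(P')=2$ separately, and then verify that the resulting exponent of $n$ is a negative constant. The paper's write-up differs only cosmetically, rewriting the edge bound in the unified form $e(P')\le k(v(P')-1)-1$ and factoring $p^{-e(P')}n^{1-v(P')}\le p(p^kn)^{1-v(P')}$ before substituting $p=n^{-1/k-\eta}$; one minor stylistic point is that your use of $j$ as a generic index in ``$i_{a+j}\ge i_a+j+1$'' clashes with the paper's fixed $j=\lfloor\ell/2\rfloor$.
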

\begin{claimproof}[Proof of Claim~\ref{claimpath2}] Removing the edge $w_j w_{j+1}$ if necessary, we have that each vertex in $P'$ has at most $(k-1)$ neighbours to the right in the labelling in~\eqref{Porder}. As the rightmost vertex in $P'$ has no such neighbours, if $v(P')\geq 3$, then
we have
$e(P')\leq (k-1)(v(P')-1)+1\leq  k(v(P')-1)-1$. If $v(P')= 2$, then $e(P')= 1\leq k(v(P')-1)-1$.
Therefore, as $\eta\ll 1/\ell,1/k$,
\[
p^{-e(P')}\cdot n^{1-v(P')}\leq p(p^kn)^{1-v(P')}=n^{-1/k-\eta}(n^{-k\eta})^{1-v(P')}\leq n^{-1/k-\eta+k\ell\eta}=o(\log^{-1}n).\hfill\qedhere
\]
\end{claimproof}

\begin{claim}\label{claimpath}
For each subgraph $P'\subset P$ with $P'\neq P$, $e(P')\geq 1$ and $U, V\subset V(P')$, we have $p^{- e(P')}\cdot n^{2k-v(P')}=o(\log^{-1} n)$.
\end{claim}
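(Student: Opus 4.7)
The goal is to show that the exponent $(1/k+\eta)e(P')-s$, where $s:=v(P')-2k$, is at most $-c$ for some positive constant $c$ independent of $n$, so that $n^{-c}=o(\log^{-1}n)$. Writing $S':=V(P')\cap\{w_1,\ldots,w_\ell\}$, so $|S'|=s$, I would split into two cases depending on whether $V(P')=V(P)$.

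If $V(P')=V(P)$, then $s=\ell$ and $P'\subsetneq P$ forces $e(P')\le e(P)-1$. Combined with Claim~\ref{claimpath0} ($e(P)\le\ell(k-1/2)$), the exponent is at most $(1/k+\eta)(\ell(k-1/2)-1)-\ell=-\ell/(2k)-1/k+O(\eta\ell k)$. Since $\eta\ll 1/m$ and $\ell^2\le\eps m$, we have $\eta\ell k=o(1)$, so this exponent is at most $-\ell/(3k)$, which is more than sufficient.

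If $V(P')\subsetneq V(P)$, then $s\le\ell-1$, and necessarily $s\ge 1$ since $e(P[U\cup V])=0$ would otherwise contradict $e(P')\ge 1$. The key step is to show $e(P[V(P')])\le ks-1$, giving exponent $\le -1/k+O(\eta\ell k)\le -1/(2k)$. To prove this, decompose $e(P[V(P')])=e_{U,S'}+e_{V,S'}+e(P[S'])$ and exploit the specific structure of $P$: edges within $U$ and within $V$ are removed, as are $u_kw_1$, $w_\ell v_1$, and every consecutive $w_iw_{i+1}$ save for $w_{j_0}w_{j_0+1}$. In the natural left-to-right ordering, $P$ is therefore essentially $(k-1)$-degenerate, with $w_{j_0}$ the unique vertex of right-degree $k$, which gives $e(P[S'])\le (k-1)s+1$. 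Setting $S'_U:=S'\cap\{w_1,\ldots,w_k\}$ and $S'_V:=S'\cap\{w_{\ell-k+1},\ldots,w_\ell\}$, we similarly have $e_{U,S'}\le (k-1)|S'_U|$ and $e_{V,S'}\le (k-1)|S'_V|$, with universal bounds $e_{U,S'},e_{V,S'}\le (k-1)(k+2)/2$.

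The main technical obstacle is combining these bounds uniformly in $s$ to reach $ks-1$ with the necessary slack of $1$. For $s\ge k^2+k$, the naive sum $(k-1)s+(k-1)(k+2)+1$ is already at most $ks-1$. For $s\le 2$, a direct count gives $e(P[V(P')])\le 2(k-1)\le ks-1$. The middle range $3\le s<k^2+k$ requires the sharper estimate $e_{U,S'}+e_{V,S'}\le (k-1)(|S'_U|+|S'_V|)\le (k-1)s$ combined with a short case analysis on the structure of $S'_U$ and $S'_V$, each lying in $\{0,1,\ldots,k\}$; since only finitely many structurally distinct configurations arise, a direct computation using the list of removed edges finishes the verification.
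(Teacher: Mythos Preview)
Your overall strategy—reducing to the inequality $e(P')\le ks-1$ with $s=v(P')-2k$, and treating $V(P')=V(P)$ separately—matches the paper's, and your large-$s$ and $s\le 2$ cases are fine. The genuine gap is the middle range $3\le s<k^2+k$. Your proposed reduction to ``finitely many structurally distinct configurations'' via $S'_U$ and $S'_V$ does not work: fixing $S'_U\subset\{w_1,\ldots,w_k\}$ and $S'_V\subset\{w_{\ell-k+1},\ldots,w_\ell\}$ determines $e_{U,S'}$ and $e_{V,S'}$, but it does \emph{not} determine $e(P[S'])$, which depends on the positions of all the remaining vertices of $S'$ inside $\{w_{k+1},\ldots,w_{\ell-k}\}$. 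Since $\ell$ is large, there are infinitely many such configurations for each fixed $s$, and your bounds $e(P[S'])\le (k-1)s+1$ together with $e_{U,S'}+e_{V,S'}\le (k-1)s$ only yield $2(k-1)s+1$, which exceeds $ks-1$ for every $k\ge 2$. No amount of case analysis on $(S'_U,S'_V)$ alone closes this.

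The paper's device is a \emph{gap argument}: list the $w$-indices in $S'$ as $i_1<\cdots<i_s$ and look for a gap of size at least $k+1$, either between consecutive $i_c$'s or between $u_k$ and $w_{i_1}$ or between $w_{i_s}$ and $v_1$. If such a gap exists, the edge set of $P[V(P')]$ splits cleanly into a left block (counted by left-degrees, each $\le k-1$) and a right block (counted by right-degrees, each $\le k-1$), giving $e(P')\le (k-1)s+1\le ks-1$ for $s\ge 2$. If no gap exists, then the indices are $k$-dense all the way from $U$ to $V$, which forces $s\ge(\ell-k)/k\gg k^2$, putting you back in the large-$s$ regime. This dichotomy is precisely what your middle-range case is missing; once you insert it, the rest of your argument goes through.
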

\begin{claimproof}[Proof of Claim~\ref{claimpath}] For such a subgraph $P'$, let $W_0=V(P')\setminus (U\cup V)$.
 We enumerate the vertices from $W_0$ by $w_{i_1}$, \ldots, $w_{i_t}$ from left to right in the ordering~\eqref{Porder}. If there is an index  $a$ with $i_{a+1}-i_a\ge k+1$, then we estimate the number of edges in $P'$ through $(k-1)|W_0|+1$. This is so because we can enumerate all the edges of $P'$ by  identifying at least one vertex adjacent to every edge of $P'$ as follows: every vertex $w_{i_c}$ ($c\le a$) is adjacent to the left to at most $k-1$ vertices, and every vertex $w_{i_c}$ ($c> a$) is adjacent to the right to at most $k-1$ vertices, the only exception being possibly the vertices $w_j$ and $w_{j+1}$ along with the edge $w_j w_{j+1}$, thus contributing one more possible edge. Therefore, if $|W_0|\geq 2$, then
\[
e(P')\leq (k-1)|W_0|+1\leq k|W_0|-1\leq k(v(P')-2k)-1.
\]
If $|W_0|=1$, then, as $\ell\gg k$ the vertex in $W_0$ cannot have neighbours in both $U$ and $V$, so that $e(P')\leq (k-1)= k(v(P')-2k)-1$.

If there is no such index $a$ as above, then
note that $v(P')\geq |W_0|\geq (\ell-k)/k\gg k^2$. Then, counting from the edges of $P'$ from their leftmost vertex in \eqref{Porder}, and remembering that $w_j w_{j+1}$ may be an edge, $e(P')\leq (k-1)v(P')+1\leq k(v(P')-2k)+2k^2-v(P')+1\leq k(v(P')-2k)-1$. Thus, in all cases, $e(P')\leq k(v(P')-2k)-1$.

Therefore, as $\eta\ll 1/\ell$,
\[
p^{-e(P')}\cdot n^{2k-v(P')}\leq p(p^kn)^{2k-v(P')}=n^{-1/k-\eta}(n^{-k\eta})^{2k-v(P')}\leq n^{-1/k-\eta+k\ell\eta}=o(\log^{-1}n).\hfill\qedhere
\]
\end{claimproof}

For each $i\in [A]$, let $\mathcal{P}_i$ be the set of copies of $P$ in the graph $G'$ with vertices in order (to match~\eqref{Porder})
\[
u_{i,1},\ldots,u_{i,k},w_{i,1},\ldots,w_{i,\ell},v_{i,1},\ldots,v_{i,k}
\]
where $w_{i,1}\in B(w^*_{i,1})\cap N_G(u_{i,k})$,  $w_{i,j'+1}\in B(w^*_{i,j'+1})\cap N_G(w_{i,j'})$ for each $j'\in [j-1]$,  $w_{i,\ell}\in B(w^*_{i,\ell})\cap N_G(v_{i,1})$, and $w_{i,j'-1}\in B(w^*_{i,j'-1})\cap N_G(w_{i,j'})$ for each $j'\in \{j+1,\ldots,\ell\}$.
That is, if such a copy of $P$ exists in $G'$ then the edge $\{w_{i,1},\ldots,w_{i,\ell}\}$ is in~$L_i$.

Note that, choosing the vertices in order $w_{i,1},\ldots,w_{i,j}, w_{i,\ell},w_{i,\ell-1},\ldots,w_{i,j+1}$, there are at least $4\eps n-|U|-s\ge 2\eps n$ options for each vertex, and therefore $|\mathcal{P}_i|=\Omega(n^{\ell})$.
Let $\mathcal{P}=\cup_{i\in A}\mathcal{P}_i$, so that $|\mathcal{P}|=\Omega(r\cdot n^{\ell})$.

For each $Q,Q'\in \mathcal{P}$, with $Q\neq Q'$, let $Q\sim Q'$ if $Q$ and $Q'$ share some edge. Let $q=p/6$, the edge probability in $G'$. Denote the expectation for the number of graphs from $\cP$ in $G'$ by $\mu=|\mathcal{P}|q^{e(P)}$ and let
\begin{align*}
\delta = \sum_{Q,Q'\in \mathcal{P}\colon\, Q\sim Q'}q^{2e(P) - e(Q \cap Q')}.
\end{align*}
 Note that, as $\eta\ll 1/k$ and $p=n^{-1/k-\eta}$, we have $q^{(k-1/2)}n=\omega(\log n)$.
As $|\mathcal{P}|=\Omega(r\cdot n^{\ell})$, we then have, using Claim~\ref{claimpath0},
\[
\mu=\Omega(r\cdot q^{e(P)}n^{\ell})=\Omega(r\cdot (q^{(k-1/2)}n)^{\ell})
=\omega(r\cdot \log n).
\]

Recall that for $j\in [k]$ the vertices $u_{i,j}$ and $v_{i,j}$ denote  the images of the end-$k$-tuples of the graph $P_i^*$ from $F^*$  given through the copy $\hat F$ of $F^*$ and moreover that all these $k$-tuples contain distinct vertices (cf.\ Section~\ref{sec:second_stage}). Let $i\in A$ and $Q\in \mathcal{P}_i$, and let $U_i=\{u_{i,1},\ldots,u_{i,k}\}$ and $V_i=\{v_{i,1},\ldots,v_{i,k}\}$. For each $P'\subsetneq Q$ with $U_i,V_i\subset V(P')$, there are at most $n^{\ell+2k-v(P')}$ graphs $Q'\in \mathcal{P}$ with $Q\cap Q'=P'$
(all of which are in $\mathcal{P}_i$).  For each subgraph $P'\subset Q-(U_i\cup V_i)$, there are at most $r\cdot n^{\ell-v(P')}\leq n^{\ell+1-v(P')}$ graphs $Q'\in \mathcal{P}$ with $Q\cap Q'=P'$.
Thus,
\[
\delta\leq |\mathcal{P}|\cdot \left(
\sum_{P'\subset P-(U\cup V)\colon\,e(P')\geq 1}q^{2e(P) - e(P')} n^{\ell+1-v(P')}
+
\sum_{P'\subsetneq P:U, V\subset V(P')}q^{2e(P) - e(P')}n^{\ell+2k-v(P')}
\right).
\]
Therefore, as  $|\mathcal{P}|=O(r n^{\ell})$ and $\mu=\Omega(r q^{e(P)}n^{\ell})$,
\[
\frac{\delta r}{\mu^2}=O\left(
\sum_{P'\subset P-(U\cup V):e(P')\geq 1}q^{- e(P')}\cdot n^{1-v(P')}
+
\sum_{P'\subsetneq P:U, V\subset V(P')}q^{-e(P')}n^{2k-v(P')}
\right)=o(\log^{-1}n),
\]
using Claims~\ref{claimpath2} and~\ref{claimpath}.
Thus, as $\mu =\omega(r\log n)$ and $\frac{\delta}{\mu^2}=o(r^{-1}\log^{-1}
n)$ we can infer from Janson's inequality, Lemma~\ref{lemma:Janson}, that with probability at least $1-\exp(-\omega(r\log n))$ there is some $i\in A$ and $Q\in \mathcal{P}_i$ in $G'$, and hence $V(Q)\setminus (U_i\cup V_i )\in E(L_i)$, as required.
\end{proof}

\section{Spanning subgraphs with bounded maximum degree}\label{sec:max}
Let $F\in \cF(n,\Delta)$ and $p=\omega(n^{-\frac{2}{\Delta +1}})$. As
before, we find a suitable set $\cF$ of large subgraphs of $F$ such that we
can whp embed one of these subgraphs $F^*\in \cF$ in
$G(n,p/2)$ (\ref{AA1} in Definition~\ref{suitabledefn}), and then extend any such $F^*$-copy (in an
auxiliary graph) to cover all of $F$ (\ref{AA2} in Definition~\ref{suitabledefn}). To do this, we adapt the strategy of Ferber, Luh and Nguyen~\cite{ferber2016embedding} to decompose~$F$.
In~\cite{ferber2016embedding}, each graph $F\in \cF(n,\Delta)$ is decomposed into a sparse part and many dense spots. Our set $\cF$ will consist of subgraphs of $F$ covering the sparse part and most of the dense parts.

Recall that the parameter
	\begin{align*}
		\gamma(H) = \max_{S \subseteq H, v(S) \ge 3} \frac{e(S)}{v(S)-2}
	\end{align*}
determines when we can apply Riordan's theorem, Theorem~\ref{theorem:Riordan}, to
embed a spanning subgraph in $G(n,p)$.
In the following we call a graph $H$ \emph{dense} if $\gamma(H) > \tfrac{\Delta+1}{2}$ and
\emph{sparse} otherwise.
We can now define, following~\cite{ferber2016embedding}, a good decomposition of a graph.

	\begin{definition}[$\varepsilon$-good decomposition]
		\label{def:good}
		Let $\varepsilon>0$, $F \in \cF(n,\Delta)$ and let $\cS_1,\dots, \cS_k$ be families of induced subgraphs of~$F$.
		For $F^\prime = F - (\bigcup_h \bigcup_{S \in \cS_h} V(S))$ we say that $(F^\prime,\cS_1,\dots,\cS_k)$ is an \emph{$\varepsilon$-good decomposition} if the following hold.
		\begin{enumerate}[label=\itmit{P\arabic{*}}] 
			\item $F^\prime$ is sparse, that is, $\gamma(F^\prime) \le \tfrac{\Delta+1}{2} $. \label{prop:sparse}
			\item Each $S \in \bigcup_h\cS_h$ is minimally dense, that is, $\gamma(S) > \tfrac{\Delta+1}{2} $ and $S^\prime$ is sparse for all $S^\prime \subseteq S$ with $3\le v(S^\prime) < v(S)$. \label{prop:dense}
			\item For each $1 \le h \le k$, all the graphs in $\cS_h$ are isomorphic.	\label{prop:isom}

			\item Every $\cS_h$ contains graphs on at most $\varepsilon n$ vertices, that is $|\bigcup_{S \in \cS_h} V(S)| \le \varepsilon n$. \label{prop:small}

			\item All the graphs in $\bigcup_i \cS_i$ are vertex disjoint and, for each $1 \le h \le k$ and $S,S^\prime \in \cS_h$ with $S\neq S'$, there are no edges between $S$ and $S'$ in $F$, and $S$ and $S'$ share no neighbours in $F$.\label{prop:disjoint}
		\end{enumerate}
        We call the graphs in $\cS_1,\dots, \cS_k$ the \emph{dense spots}
        of the decomposition.
	\end{definition}

We remark that our definition is slightly less restrictive than that from~\cite{ferber2016embedding}, where \itmit{P3} is replaced by a stronger condition.
	An $\varepsilon$-good decomposition can easily be found using a greedy algorithm.
	The following lemma is proved in~\cite{ferber2016embedding}.

	\begin{lemma}[Lemma~2.2 in~\cite{ferber2016embedding}]
		\label{lemma:decompose}
		For each $\varepsilon >0$ and $\Delta>0$, there exists some $k_0$ such that, for each $F \in \cF(n,\Delta)$, there is some $k\leq k_0$ and an $\varepsilon$-good decomposition $(F^\prime,\cS_1,\dots,\cS_k)$ of $F$.
	\end{lemma}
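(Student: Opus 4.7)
The plan is to construct the decomposition in two stages: first choose a collection of induced copies of every minimally dense iso-type that together hits every minimally dense induced subgraph of~$F$, and then split those copies into bounded-size families satisfying \itmit{P3}--\itmit{P5}. As a preliminary I would observe that the number of isomorphism types of minimally dense graphs of maximum degree at most~$\Delta$ is a constant $t=t(\Delta)$: if $S$ is minimally dense then no proper subgraph $S'\subsetneq S$ with $v(S')\ge 3$ attains a ratio exceeding $(\Delta+1)/2$, so the maximum in $\gamma(S)$ is attained by $S$ itself, giving $e(S)/(v(S)-2)>(\Delta+1)/2$; combined with $e(S)\le\Delta v(S)/2$ this forces $v(S)<2(\Delta+1)$, yielding a finite list $H_1,\dots,H_t$ of isomorphism types.

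For the first stage I would set $F^{(1)}=F$ and process the iso-types in order: for $i=1,\dots,t$, greedily choose a maximal collection $T_i$ of pairwise vertex-disjoint induced $H_i$-copies in $F^{(i)}$, and let $F^{(i+1)}=F^{(i)}-V(T_i)$; finally define $F'=F^{(t+1)}$. To verify that $F'$ is sparse, I would suppose otherwise and pick $U\subseteq V(F')$ of smallest size with $|U|\ge 3$ and $\gamma(F'[U])>(\Delta+1)/2$. Minimality of $|U|$ then implies that every induced subgraph of $F'$ on a vertex set strictly contained in~$U$ is sparse, from which it follows (since $\gamma$ is a maximum over subgraphs on at least three vertices) that $F'[U]$ is itself minimally dense. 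Hence $F'[U]\cong H_i$ for some $i$, but $V(F'[U])\subseteq V(F^{(i)})$, so $F'[U]$ is an induced $H_i$-copy inside $F^{(i)}$ that is vertex-disjoint from $T_i$, contradicting the maximality of~$T_i$.

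For the second stage I would distribute each $T_i$ into families via a coloring argument. Let the conflict graph $C_i$ have vertex set $T_i$ and an edge between $S,S'\in T_i$ whenever an $F$-edge runs between $V(S)$ and $V(S')$ or they share an $F$-neighbor. Since the second $F$-neighborhood of any $S\in T_i$ has size at most $v(H_i)(1+\Delta+\Delta^2)$ and each vertex of~$F$ belongs to at most a bounded number of induced $H_i$-copies, the maximum degree of $C_i$ is bounded by some $D=D(\Delta)$. A greedy $(D+1)$-coloring partitions $T_i$ into $D+1$ color classes, each of which is an independent set of $C_i$ and thus \itmit{P5}-compatible. Subdividing each color class further into blocks of at most $\lfloor\varepsilon n/v(H_i)\rfloor$ copies and declaring each block a family $\cS_h$ then secures \itmit{P4}, while \itmit{P1}--\itmit{P3} are immediate. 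Using $|T_i|\le n/v(H_i)$, the total number of families is at most $t(D+1)(1/\varepsilon+1)=:k_0$, which depends only on $\varepsilon$ and~$\Delta$, as required.

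The main obstacle is the sparsity verification in the first stage: a priori $F'$ might still contain some minimally dense induced copy that survived because pieces of it were used by the preceding $T_j$'s. The key observation, used above, is that any such surviving copy $S\subseteq F'$ with $S\cong H_i$ has $V(S)\subseteq V(F^{(i)})$, making it a legitimate candidate to join $T_i$, so maximality of $T_i$ inside $F^{(i)}$ forces a vertex conflict---a contradiction.
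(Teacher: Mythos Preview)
The paper does not give its own proof of this lemma; it simply cites \cite{ferber2016embedding} and remarks that an $\varepsilon$-good decomposition ``can easily be found using a greedy algorithm''. Your proposal supplies exactly such a greedy argument and is correct: the size bound $v(S)<2(\Delta+1)$ for minimally dense $S$, the iterative extraction of maximal disjoint induced $H_i$-copies, the minimality argument showing $F'$ is sparse, and the conflict-graph colouring to obtain \ref{prop:disjoint} all go through as you describe. One small imprecision: in bounding the maximum degree of $C_i$ you appeal to ``each vertex of $F$ belongs to at most a bounded number of induced $H_i$-copies'', but this is unnecessary---the copies in $T_i$ are pairwise vertex-disjoint by construction, so at most $v(H_i)(1+\Delta+\Delta^2)$ of them can meet the closed second neighbourhood of any given $S$. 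With that clarification your bound $D=D(\Delta)$ and the resulting $k_0=t(D+1)(1/\varepsilon+1)$ are fine.
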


    In the following we shall use this lemma to define a family~$\cF$ of
    subgraphs of $F\in\cF(n,\Delta)$. We shall then show that this family~$\cF$ satisfies \ref{AA1} and
    \ref{AA2} and hence is $(\alpha,p)$-suitable, which by
    Theorem~\ref{newtheorem} implies
    Theorem~\ref{theorem:main} as desired.

\subsection{Choosing \texorpdfstring{$\cF$}{F}}
Fix $F \in \cF(n,\Delta)$.
Let $\eps = (\frac{\alpha}{4 \Delta})^{2\Delta}$, and let $k_0$ be large enough for the result of Lemma~\ref{lemma:decompose} to
hold with $\eps$ and $\Delta$.
By Lemma~\ref{lemma:decompose}, for some $k\leq k_0$,
there is an $\varepsilon$-good decomposition $(F^\prime,\cS_1,\dots,\cS_k)$ of
$F$, which we fix.

For each $1\leq h\leq k$, let $s_h$ be the size of the graphs in $\cS_h$ (possible by~\ref{prop:isom}), and, picking some representative $S\in \cS_h$, note that, by~\ref{prop:dense} and as $\Delta(S)\leq \Delta$, we have
\[
(\Delta+1)(s_h-2)<2e(S)\leq \Delta s_h,
\]
so that $s_h<2\Delta+2$. Thus, we may consider $\alpha$, $\Delta$, $\eps$, $k\leq k_0$, and the maximum size of each dense spot ($2\Delta+1$) to be constant, while $n$ tends to infinity.

Let $\cF$ contain exactly those induced subgraphs of $F$ which cover $F'$ and, for each $1\leq h\leq k$, all but at most $\frac{\eps n}{s_h^2k}$ of the graphs from $\cS_h$.

\subsection{Proof that~\texorpdfstring{$\cF$}{F} satisfies \texorpdfstring{\ref{AA1}}{(A1)}}
\label{sec:embedmost}

We shall embed the copy of $F^\prime$ using Riordan's theorem, Theorem~\ref{theorem:Riordan}.
In~\cite{ferber2016embedding}, the embedding of $F^\prime$ is then extended step
by step to include the graphs in $\cS_h$, for $1\leq h\leq k$. We proceed
similarly, but in each step only include most of the graphs $\cS_h$, for
$1\leq h\leq k$. This allows us to work at a lower probability than that
used in~\cite{ferber2016embedding}, as we aim to find a copy of only some graph in $\cF$.

To find such a copy of a graph in $\cF$, we expose the graph $G(n,p/2)$ in a total of $k+1$ rounds, revealing $G_h \sim G(n,q)$ for $0 \le h \le k$, where $q=p/(6k)$ and thus $(1-q)^{k+1} \ge 1-p/2$.
Every edge is thus present with probability at most $p/2$ in $\bigcup_hG_h$.
We use $G_0$ to embed $F'$ and then iteratively use $G_1,\ldots,G_k$ to
embed as many subgraphs from $\cS_1,\ldots,\cS_k$ as possible, and show
that this results whp in an embedding of a subgraph from $\cF$.

Since, by~\ref{prop:sparse}, $\gamma(F^\prime) \le \tfrac{\Delta+1}{2}$,
and thus $q = \omega( n^{-\frac{1}{\gamma(F^\prime)}})$, by
Theorem~\ref{theorem:Riordan}, we can whp embed $F^\prime$ into $G_0$.
Let $f_0 : V(F^\prime) \rightarrow V(G_0)$ be such an embedding and let $F_0'=f_0(F')$.

For $1 \le h \le k$, we want to (whp) use edges from $G_h$ to
extend the embedding $f_{h-1}$ to cover all but at most $\tfrac{\varepsilon
  n}{s_h^2 k}$ graphs from $\cS_h$. We then let $f_h$ be the extended
embedding and let $F'_h$ be the subgraph of $F$ embedded by $f_h$. We use
the following lemma, which allows us to extend the current embedding to one
more dense spot $S\in\cS_h$, even if we restrict its image to a small but
linearly sized set~$U$, using only edges of~$G_h$.
This lemma is proved along with another lemma from this section in Section~\ref{sec:proofLemmata}.

\begin{lemma}
	\label{lemma:proofalmost}
For each $1\leq h\leq k$, the following holds whp for any $\cS \subseteq \cS_h$ and $U \subseteq V(G_\alpha)$ with $|\cS | \ge \tfrac{\varepsilon n}{s_h^2 k} $ and $|U| \ge \tfrac{\varepsilon n}{s_hk}$.
There is some $S\in \cS$ and a copy $S'$ of $S$ in $G_h[U]$ with an
embedding $\pi: V(S)\to V(S')$ such that, for each $v\in V(S)$,
\begin{equation}\label{thisone}
f_{h-1}(N_{F}(v)\cap V(F'_{h-1}))\subseteq N_{G_h}(\pi (v)).
\end{equation}
\end{lemma}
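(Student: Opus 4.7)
My plan is to fix an admissible pair $(\cS,U)$, apply Janson's inequality (Lemma~\ref{lemma:Janson}) to $G_h$ to show that with probability $1-\exp(-\omega(n))$ there is a valid $(S,\pi)$, and then take a union bound over the at most $2^{O(n)}$ admissible pairs (using $|\cS_h|=O(n)$ and $U\subseteq[n]$). For each $S\in\cS$ and each injection $\pi:V(S)\to U$, let $X_{S,\pi}$ indicate the event that the ``required'' $G_h$-edges are all present: the $e(S)$ internal edges obtained by mapping $E(S)$ through $\pi$, together with the $D_S=\sum_{v\in V(S)}|N_F(v)\cap V(F'_{h-1})|$ ``external'' edges from $\pi(v)$ to $f_{h-1}(w)$ for each $w\in N_F(v)\cap V(F'_{h-1})$. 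Then $\PP(X_{S,\pi}=1)=q^{e(S)+D_S}$.

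Set $X=\sum X_{S,\pi}$. By~\ref{prop:isom} $e(S)$ is constant across $\cS_h$, by~\ref{prop:dense} minimal density gives $e(S)>(\Delta+1)(s_h-2)/2$, and the maximum-degree condition gives $D_S\le\Delta s_h-2e(S)$. Combining these yields $2(e(S)+D_S)\le s_h(\Delta+1)$ in all cases (with equality exactly in the borderline situations like $s_h=\Delta$, which forces $S=K_\Delta$). Plugging $|\cS|,|U|=\Omega(n)$ and $q=\omega(n^{-2/(\Delta+1)})$ into
\[
\EE[X]\ge|\cS|\cdot(|U|)_{s_h}\cdot q^{e(S)+D_{\max}},
\]
the powers of $n$ balance to exactly $n$, and the ``$\omega$'' hidden in $q/n^{-2/(\Delta+1)}$ raised to $e(S)+D_S$ gives $\EE[X]=\omega(n)$.

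The heart of the proof is bounding the correlation sum $\delta=\sum_{(S,\pi)\sim(S',\pi')}\EE[X_{S,\pi}X_{S',\pi'}]$, where $\sim$ denotes that the sets of required $G_h$-edges intersect. Assuming $U\cap V(F'_{h-1})=\emptyset$ (which is the relevant case from the iterative application), any overlap is a disjoint union of shared internal edges (pinning two image vertices in $U$ each) and shared external edges (pinning one image vertex in $U$ each, while matching external $F$-neighbours). I would group pairs by the internal sub-overlap $J_1$ and the external sub-overlap $J_2$: each nontrivial piece reduces the pair count by roughly $n^{v(J_1)+v(J_2)}$ while inflating the probability by $q^{-|J_1|-|J_2|}$. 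By the minimality clause of~\ref{prop:dense}, every proper sub-pattern of $S$ has density at most $(\Delta+1)/2$, and the external part contributes no worse than the same rate; this exactly matches the exponent of $q$ and makes each contribution to $\delta$ controllable, analogously to the density calculation in Claims~\ref{claimpath2}--\ref{claimpath}. This yields $\delta=o(\EE[X]^2/n)$, and Janson gives a per-pair failure probability of $\exp(-\omega(n))$.

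The main obstacle is the external part of $\delta$: overlapping external edges involve the ``neighbour pattern'' of the dense spot $S$ into $V(F'_{h-1})$, which is determined by the global structure of $F$ (not just $S$), so one has to count pairs $(v,v')$ of vertices in $V(S),V(S')$ with matching external $F$-neighbours. Minimality of the dense spots in~\ref{prop:dense} is the key structural input keeping the overall overlap density below $(\Delta+1)/2$; without it, the external edges could create a sub-pattern denser than $S$ itself, and the Janson estimate would fail.
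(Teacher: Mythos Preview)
Your framework is exactly the paper's: fix $(\cS,U)$, apply Janson's inequality to the family of ``augmented'' copies (internal $S$-edges together with the required external edges into $f_{h-1}(F'_{h-1})$), obtain failure probability $\exp(-\omega(n))$, and union-bound over the $2^{O(n)}$ choices. Your bound $2(e(S)+D_S)\le s_h(\Delta+1)$ is precisely Claim~\ref{claim0}\ref{claim3} in the paper, and your $\mu=\omega(n)$ calculation is the same.

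Your treatment of $\delta$, however, misidentifies the difficulty. What you call the ``main obstacle'' --- shared external edges between distinct $S,S'\in\cS$ --- does not arise: by~\ref{prop:disjoint} the dense spots in $\cS_h$ share no $F$-neighbours, so the $W_S$ are pairwise disjoint and two augmented copies with $S\neq S'$ can overlap only in internal edges. The paper therefore reduces the cross-$S$ contribution to the bound $2e(J)<(\Delta+1)(v(J)-1)$ on proper internal overlaps (Claim~\ref{claim0}\ref{claim1}), which is immediate from minimality. The genuine work is the \emph{same-$S$} term: two embeddings $\pi\neq\pi'$ of a single $S$ share all external edges out of their common image vertices, and one needs $2\,e(J\oplus W_S)<(\Delta+1)\,v(J)$ for proper $J$ (Claim~\ref{claim0}\ref{claim2}). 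This is not a direct consequence of ``density at most $(\Delta+1)/2$'' for sub-patterns; the paper's proof combines $2e(S)>(\Delta+1)(s-2)$ with the minimality bound on the complementary piece $I=S-V(J)$ in a short case analysis. Two smaller points you should also address: the paper passes to a subfamily $\cS'\subseteq\cS$ on which $e(S)+D_S$ is constant (at the cost of a bounded factor) so that $\mu$ and $\delta$ become homogeneous, and it treats separately the case where some $S\in\cS$ has $D_S=0$, because there only the internal bound $2e(S)\le(s_h-1)(\Delta+1)$ is available and this must compensate for losing the factor $|\cS|$ in $\mu$.
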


Start with $f_0$ and $F'_0$. For each $1\leq h\leq k$, we construct $f_h$ and
$F'_h$, as follows. The property in Lemma~\ref{lemma:proofalmost} whp holds for $h$. We extend the embedding $f_{h-1}$ to $f_h$ using edges
from $G_h$ to cover as many of the graphs in $\cS_h$ as possible (with any
edges to $F'_{h-1}$ correctly embedded), and call the resulting graph
$F'_h$. By the property in Lemma~\ref{lemma:proofalmost}  this leaves at most $\tfrac{\eps n}{s_h^2k}$ graphs in $\cS_h$
unembedded. Indeed, if there is a set $\cS$ of at least $\tfrac{\eps
  n}{s_h^2k}$ unembedded graphs in $\cS_h$, then, let
$U=V(G_\alpha)\setminus V(F'_h)$ and note that $|U|\geq s_h\cdot |\cS|\geq
\tfrac{\eps n}{s_hk}$. There then exists some $S\in \cS$ and a copy $S'$ of
$S$ in $G_h[U]$ with isomorphism $\pi: V(S)\to V(S')$ such
that~\eqref{thisone} holds for each $v\in V(S)$. As, by~\ref{prop:disjoint}, no two subgraphs in $\cS_h$ have an edge between them, $\pi$ can be used to embed $S$ and extend the embedding $f_h$, a contradiction.

From this we obtain (whp) the embedding $f_k$ of a subgraph of $F$, covering $F'$ and all but at most $\tfrac{\eps n}{s_h^2k}$ graphs from each $\cS_h$, $1 \le h \le k$, into $\bigcup_h G_h$. Such a subgraph embedded by $f_k$ is thus in $\cF$, and therefore \ref{AA1} holds.


\subsection{Proof that~\texorpdfstring{$\cF$}{F} satisfies \texorpdfstring{\ref{AA2}}{(A2)}}
Let $F^*\in \cF$ and let the graph $G$ be as described in \ref{AA2} containing the copy $\hat F$ of $F^*$.

 For each $1\leq h\leq k$,
let $\cS_h^\prime \subseteq \cS_h$ be those dense parts not in $F^*$, so
that $|\cS'_h|\leq \tfrac{\eps n}{s_h^2k}$. We have, for each $1\leq h\leq
k$, that the graphs in $\cS'_h$ are isomorphic, minimally dense,
disjoint, and neither have edges between them nor share any neighbours. Furthermore,
the sets in $\{V(F^*)\}\cup \{V(S): S\in \cS'_h,1\leq h\leq k\}$ form a partition of $V(F)$. Note that $|V(F)\setminus V(F^*)|\leq \eps n$. For each $0\leq h\leq k$, let $F_h$ be the induced subgraph of $F$ with vertex set $V(F^*)\cup(\cup_{h'\leq h}\cup_{S\in \cS_{h'}}V(S))$.

Let $G'_1,\ldots, G'_k$ be independent random graphs with $G'_i\sim G(2n,q)$, where $q=p/(6k)$.
Starting with $g_0$ and $F_0=F^*$, for each $1 \le h \le k$ in turn, we
will (whp) inductively find a function
\[
g_h:V(F_h)\to [2n]
\]
such that
\begin{enumerate}[label=\itmit{Q\arabic{*}}] 
\item $g_h$ is an embedding of $F_h$ into $G\cup(\bigcup_{h'\leq h}G'_{h'})$, which extends $g_{h-1}$ and \label{gprop1}
\item \label{gprop2} for each vertex $v\in F_{h}\setminus F_{h-1}$, we have $g_h(v)\in B(v)$.
\end{enumerate}
Note that $g_0$ satisfies these properties, and that, once we find $g_k$ whp, we will have an embedding of $F_k=F$ into $G \cup  (\bigcup_{1\le h\le k} G'_{h})$, satisfying the conditions in \ref{AA2}. Noting that each edge in $\bigcup_hG_h$ appears independently at random with probability at most $p/6$, we then have that \ref{AA2} holds.

Suppose then that $1\leq h\leq k$ and we have found the function
$g_{h-1}$ satisfying~\ref{gprop1} and~\ref{gprop2}. Let $W_{h-1}=[2n]\setminus g_{h-1}(F_{h-1})$. 
For each $S\in \cS'_h$, label $V(S)=\{z_{S,1},\ldots, z_{S,s_h}\}$,
and let $L_S$ be the $s_h$-uniform auxiliary hypergraph with vertex set
$W_{h-1}$, where $e$ is an edge of $L_S$ if, for some labelling
$e=\{w_{S,1},\ldots,w_{S,s_h}\}$, the map $z_{S,i}\mapsto w_{S,i}$ is an
embedding of $S$ into $G \cup G'_h$, where, for each $1\leq i\leq s_h$ we have
$w_{S,i}\in B(z_{S,i})$ and $g_{h-1}(N_{F_h}(z_{S,i})\cap(V(F_{h-1})))\subset N_{G\cup G'_h}(w_{S,i})$.
Each hyperedge $e=\{w_{S,1},\ldots,w_{S,s_h}\}$ of $L_S$ then corresponds to a possible extension of $g_{h-1}$ to cover $S\in \cS'_h$.

We wish to show that whp there exists a function $\pi : \cS'_h \mapsto \bigcup_{S\in \cS_h'} E(L_S)$ such that $\pi(S)\in E(L_S)$ for each $S\in \cS_h'$, and the edges in $\pi(\cS'_h)$ are pairwise vertex disjoint. This is possible, as shown below, using Theorem~\ref{theorem:hall} and the following lemma.

\begin{lemma}
	\label{lemma:CompleteEmbedding}
	For each $1 \le h \le k$, $1 \le r \le |\cS_h'|$, $\cS \subseteq
    \cS'_h$ and $U\subseteq W_{h-1}$, with $|\cS|=r$ and $|U|\leq s_h^2r$,
    the following holds with probability at least $1 - \exp (- \omega(r \log
    (\tfrac{n}{r})))$. There exists some $S\in \cS$ and an edge $e\in E(L_S)$ with $V(e)\subseteq W_{h-1}\setminus U$.
\end{lemma}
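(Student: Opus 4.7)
My plan is to apply Janson's inequality, mirroring the proof of Lemma~\ref{extendnew}. Fix $h$, $\cS\subseteq\cS_h'$, and $U\subseteq W_{h-1}$ as in the statement, so that $|\cS|=r$ and $|U|\le s_h^2r\le\eps n$. For each $S\in\cS$, a hyperedge of $L_S$ avoiding $U$ corresponds to an embedding $\pi\colon V(S)\to W_{h-1}\setminus U$ of a ``target graph'' $T_S$ into $G\cup G_h'$, where $T_S$ has as edges the internal edges of $S$ together with the external edges joining each $v\in V(S)$ to the already-placed image $g_{h-1}(u)$ for each $u\in N_F(v)\cap V(F_{h-1})$, subject to $\pi(v)\in B(v)$. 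Observe that $S$ is connected, for otherwise a proper component would carry the same $\gamma$-value as $S$, contradicting the minimal density condition \ref{prop:dense}.

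The central idea is to reserve the deterministic graph $G$ for exactly one ``reference'' edge per vertex of $S$, so that the remaining edges of $T_S$, which must come from $G_h'$, form a subgraph $P_S^*$ of controlled density. Order $V(S)=\{v_1,\dots,v_{s_h}\}$ such that for each $i\ge 2$ there is some $j<i$ with $v_jv_i\in E(S)$ (possible by connectedness), and designate this $v_j$ as the reference of $v_i$; if $v_1$ has an external neighbour in $V(F_{h-1})$, pick such a neighbour as its reference. Declare the corresponding edges of $T_S$ to be realised in $G$, and collect all remaining edges of $T_S$ into $P_S^*$, to be realised in $G_h'$. Counting admissible tuples $(w_{v_1},\dots,w_{v_{s_h}})$ sequentially, at step $i$ the vertex $w_{v_i}$ is chosen from $B(v_i)\cap N_G(w_i^*)\setminus(U\cup\{w_{v_1},\dots,w_{v_{i-1}}\})$, where $w_i^*$ is the (previously placed) image of the reference (with the $N_G(\cdot)$ constraint dropped if $v_1$ has no reference). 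By the hypothesis $|B(v)\cap N_G(w)|\ge 4\eps n$, $|U|\le\eps n$, and $s_h\le 2\Delta+1$, this set has size at least $3\eps n$. Hence the family $\mathcal{P}_S$ of admissible tuples satisfies $|\mathcal{P}_S|\ge(3\eps n)^{s_h}$.

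Set $q=p/(6k)=\omega(n^{-2/(\Delta+1)})$, $\mu_S=|\mathcal{P}_S|\,q^{e(P_S^*)}$, and $\mu=\sum_{S\in\cS}\mu_S$. A degree count using $2e(S)+|\mathrm{ext}|\le s_h\Delta$ together with the fact that at least $s_h-1$ reference edges are covered by $G$ gives $e(P_S^*)\le s_h(\Delta-1)-e(S)+1$, and combining this with the strict minimal-density bound $e(S)>(\Delta+1)(s_h-2)/2$ --- together with the observation (from $e(S)\le\binom{s_h}{2}$ and the minimal-density bound) that $s_h>(\Delta+2)/2$ for any minimally dense $S$ when $\Delta\ge 5$ --- yields $e(P_S^*)<s_h(\Delta+1)/2$ with a constant gap. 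It follows that $n^{s_h}q^{e(P_S^*)}=n^{\Omega(1)}$ and hence $\mu=\omega(r\log(n/r))$. For the correlation term $\delta$ in Janson, contributions from pairs of admissible tuples attached to the same $S$ are indexed by proper sub-structures of $T_S$, each sparse by the minimal density of $S$, and are controlled exactly as in Claims~\ref{claimpath2}--\ref{claimpath}. Contributions from pairs attached to distinct $S,S'\in\cS$ can share random edges only on placements $w_v\in[2n]$ that coincide; by \ref{prop:disjoint} the spots share no $F$-neighbours in $V(F_{h-1})$, so these cross-terms are bounded by the same subgraph-sum estimates. Altogether $\delta/\mu^2=o(1/(r\log n))$, and Janson's inequality gives
\[
\PP(\text{no admissible tuple exists in }G\cup G_h')\le\exp(-\mu/4)\le\exp(-\omega(r\log(n/r))),
\]
as required.

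The principal obstacle is the density accounting: the hypothesis $|B(v)\cap N_G(w)|\ge 4\eps n$ bounds intersections with only a \emph{single} $G$-neighbourhood, forcing us to use at most one deterministic reference edge per vertex. This makes the bound on $e(P_S^*)$ essentially tight, and verifying that it is strictly less than $s_h(\Delta+1)/2$ requires the combined use of the minimal density of $S$, the degree bound $\Delta(F)\le\Delta$, and the assumption $\Delta\ge 5$. Handling the cross-terms in $\delta$ between distinct dense spots is a secondary but purely bookkeeping issue, resolved by the disjointness property \ref{prop:disjoint}.
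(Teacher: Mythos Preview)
Your strategy differs from the paper's in one key respect. The paper, in Case~I, uses the deterministic graph $G$ for exactly \emph{one} external edge (from $v_{H,1}$ to a single fixed anchor $w_S$), leaving all of $E(S)$ and all remaining external edges to be found in $G'_h$; this yields the bound $2e(H\oplus W_S)\le (\Delta+1)s_h-2$ and hence $\mu=\omega(r\log n)$. You instead route an entire spanning tree of $S$ (and possibly one external edge) through $G$, which gives the stronger bound $e(P_S^*)<s_h(\Delta+1)/2$ with a gap of at least $\Delta-2$, and hence $\mu=r\cdot n^{\Omega(1)}$. Your edge-count $e(P_S^*)\le s_h(\Delta-1)-e(S)+1$ is correct, and your observation that $s_h\ge\Delta>(\Delta+2)/2$ for minimally dense $S$ with $\Delta\ge 5$ is also correct (indeed the paper's proof of Claim~\ref{claim0}\ref{claim3} establishes this). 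So the $\mu$-half of your argument is sound, and in fact stronger than needed.

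The gap is in the $\delta$-half. You assert that the correlation terms are ``controlled exactly as in Claims~\ref{claimpath2}--\ref{claimpath}'', but those claims concern sub-paths of $k$th powers of paths and have no bearing here. What you actually need is the analogue of Claim~\ref{claim0} from the proof of Lemma~\ref{lemma:proofalmost}: for proper sub-structures $J\subsetneq V(S)$ one needs $2e_J<(\Delta+1)|J|$ (same-$S$ pairs) and, for cross-$S$ pairs, the sharper $2e_J<(\Delta+1)(|J|-1)$ on the purely internal overlap. These bounds do carry over to your $P_S^*$ --- removing spanning-tree edges only makes sub-structures sparser --- but you have not argued this, and the cross-$S$ case is genuinely more delicate in your setup: because your admissible tuples are labelled sequences rather than unlabelled copies, the overlap between a tuple for $S$ and one for $S'$ need not correspond to an induced subgraph $S_0[I]$ for a single index set $I$, so translating Claim~\ref{claim0}\ref{claim1} requires care. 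Finally, your concluding bound $\exp(-\mu/4)$ presumes $\delta\le\mu$, which you have not established; you should instead bound $\mu^2/(\mu+\delta)$ directly. The paper's choice to use only one $G$-edge keeps the random graph $H\oplus W_S$ structurally close to $S$ itself, so that Claim~\ref{claim0} applies without modification; your more aggressive use of $G$ buys a larger $\mu$ but at the cost of a $\delta$-analysis you have not carried out.
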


The property in Lemma~\ref{lemma:CompleteEmbedding} then holds for each $1 \le h \le k$, $1 \le r \le |\cS'_h|$, $\cS \subseteq \cS'_h$ and $U\subseteq W_{h-1}$, with $|\cS|=r$ and $|U|\leq s_h^2r$ with probability at least
\[
1-k\cdot n\cdot \sum^{|\cS_h'|}_{r=1}\binom{n}{r}\cdot \binom{n}{s^2_hr}\cdot\exp(-\omega(r\log (\tfrac{n}{r})))=1-o(1).
\]
Similarly to our deductions from Lemma~\ref{extendnew}, it then follows that, for every $\cS \subseteq \cS_h'$, the hypergraph $\bigcup_{S \in \cS} L_S$ contains a matching with size greater than $s_h(|\cS|-1)$.
Therefore, by Theorem~\ref{theorem:hall}, a function $\pi$ as described
above exists. Thus, we can extend $g_{h-1}$ to an embedding $g_h$ of $F_h$
satisfying~\ref{gprop1} and~\ref{gprop2} as required.

Subject to the proof of Lemma~\ref{lemma:CompleteEmbedding}, this completes the proof that \ref{AA2} holds.


\section{Proofs of auxiliary lemmas}

\label{sec:proofLemmata}

In this section, we give the proofs of the lemmas from Section~\ref{sec:max}.

\subsection{Proof of Lemma~\ref{lemma:proofalmost}}
\label{sec:proofalmost}

We prove Lemma~\ref{lemma:proofalmost} with Janson's inequality, using similar calculations to Ferber, Luh and Nguyen~\cite{ferber2016embedding}.

\begin{proof}[Proof of Lemma~\ref{lemma:proofalmost}]
Fixing $h$, note that there are certainly at most $2^n\cdot 2^n$ choices for $\cS$ and $U$. Therefore, it is sufficient to prove, for fixed $\cS \subseteq \cS_h$ and $U \subseteq V(G_\alpha)\setminus f_{h-1}(V(F'_{h-1}))$ with $|\cS| \geq \tfrac{\varepsilon n}{s_h^2k}$ and $|U|\geq \tfrac{ \varepsilon n}{s_hk}$, the property in the lemma holds with probability $1-e^{-\omega(n)}$.

Let $s=s_h$.
Pick some $S_0\in \cS$, so that, by~\ref{prop:isom}, each graph in $\cS$ is isomorphic to $S_0$, and label $V(S_0)=\{v_1,\ldots,v_s\}$. Let $\cH$ be a set of $\binom{|U|}{s}$ copies of $S_0$ in the complete graph with vertex set $U$, where each copy of $S_0$ has a different vertex set. Note that $|U|=\Omega(n)$ and $|\cH|=\Omega(n^s)$. For each $S\in \cS$ and $H\in \cH$, label $V(S)=\{z_{S,1},\ldots,z_{S,s}\}$ and $V(H)=\{v_{H,1},\ldots,v_{H,s}\}$ so that $v_i\mapsto z_{S,i}$ and $v_i\mapsto v_{H,i}$ are embeddings of $S_0$.

Each graph in $\cS$ is isomorphic to $S_0$ in $F$, but, when we come to extend an embedding of $F'_{h-1}$ to $F'_h$ by embedding `most' of the copies from $\cS_h$, the number of edges between a copy from $\cS_h$ and the  already embedded $F'_{h-1}$ may differ.
We now distinguish two cases: Case I where each copy $S$ from $\cS$ has some edge between $S$ and $F'_{h-1}$ in $F'_h$ and Case II where there is some copy $S$ from $\cS$ for which there is no such edge.

Let us assume first that we are in Case I. For each $S\in \cS$, let
$W_S=f_{h-1}(\bigcup_{v\in V(S)}N_{F}(v)\cap V(F'_{h-1}))$ be the images of the
already embedded neighbours of vertices in~$S$. Note that these sets~$W_S$
are non-empty by the definition of Case I
and by~\ref{prop:disjoint} are disjoint.  For each $H\in \cH$ and $S\in \cS$,
let $H\oplus W_S$ be the graph with vertex set $V(H)\cup W_S$ containing
exactly those edges that we need in order to extend the partial
embedding we have to embed $S$  into~$H$. That is, $H\oplus W_S$ has
edge set
\[
E(H)\cup \{v_{H,i}v: 1\leq i\leq s, v\in f_{h-1}(N_{F}(z_{S,i})\cap V(F'_{h-1}))\}.
\]
For each $S\in \cS$, $H\in \cH$ and $J\subseteq H$, let $J\oplus
W_S=(H\oplus W_S)[V(J)\cup W_S]$. Let $\cH^+=\{H\oplus W_S: H\in \cH,S\in
\cS\}$, and note that if any graph from $\cH^+$ appears in $G_h$ then we
can indeed extend our current embedding to one more dense spot in~$\cS$,
and hence are done.

Let $\cJ=\{H\cap H': H,H'\in \cH,e(H\cap H')>0\}$ and $\cJ'=\{H\cap
H': H,H'\in \cH, H\neq H'\}\setminus \emptyset$. We will show that $\mathbb P(\exists H\in \cH^+\text{ with }H\subseteq G_h)=1-\exp(-\omega(n))$ follows from Lemma~\ref{lemma:Janson} and the following claim, which we then prove.

\begin{claim} \label{claim0} \mbox{} 
		\begin{enumerate}[label=\rom]
		\item \label{claim1} For each $J\in \cJ$, $2e(J)< (\Delta+1)(v(J)-1)$.
		\item \label{claim3} For each $H\in \cH$ and $S\in \cS$, $2e(H\oplus W_S)\leq (\Delta+1)s$.
		\item \label{claim2} For each $J\in \cJ'$ and $S\in \cS$, $2e(J\oplus W_S)< (\Delta+1)v(J)$.
	\end{enumerate}
\end{claim}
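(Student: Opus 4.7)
All three parts will be proved by direct edge counting, using two facts throughout. First, by~\ref{prop:dense} the graph $S_0$ is minimally dense, so every proper subgraph of $S_0$ on at least $3$ vertices satisfies $\gamma\leq (\Delta+1)/2$. Second, the max-degree bound $\Delta(F)\leq\Delta$, combined with the disjointness $V(S)\cap V(F'_{h-1})=\emptyset$, gives
\[
|N_F(z_{S,i})\cap V(F'_{h-1})|\leq \deg_F(z_{S,i})-\deg_S(z_{S,i})\leq \Delta-\deg_S(z_{S,i})\,.
\]
Throughout I identify $V(H)$ with $V(S_0)$ via the labelling $v_i\leftrightarrow v_{H,i}$, and for $J=H\cap H'$ I write $V_J\subseteq V(S_0)$ for the corresponding vertex subset.

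Part~(i) is immediate. Since $H\neq H'$ have distinct vertex sets in $\cH$, we have $v(J)<s$, so the corresponding subgraph of $S_0$ is proper; for $v(J)\geq 3$ minimality yields $2e(J)\leq (\Delta+1)(v(J)-2)<(\Delta+1)(v(J)-1)$, while for $v(J)=2$ the assumption $e(J)\geq 1$ forces $e(J)=1$ and the trivial bound $2<\Delta+1$ suffices.

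For parts~(ii) and~(iii), I will use the identities $e(H\oplus W_S)=e(H)+m$ and $e(J\oplus W_S)=e(J)+m_J$, where $m$ and $m_J$ count the bipartite edges to $W_S$. Combined with the displayed inequality above, these give
\[
m\leq \Delta s-2e(S_0)\qquad\text{and}\qquad m_J\leq \Delta v(J)-2e(S_0[V_J])-e_{S_0}\big(V_J,V(S_0)\setminus V_J\big)\,.
\]
Using $e(J)\leq e(S_0[V_J])$, parts~(ii) and~(iii) reduce respectively to the arithmetic inequalities $2e(S_0)\geq (\Delta-1)s$ and $2e(S_0[V_J])+2e_{S_0}(V_J,V(S_0)\setminus V_J)>(\Delta-1)v(J)$. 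The first follows from the density condition $2e(S_0)>(\Delta+1)(s-2)$ by a one-line algebraic check when $s\geq\Delta+1$. The second follows by applying the density to the partition $V(S_0)=V_J\sqcup(V(S_0)\setminus V_J)$: the target quantity is at least $(\Delta+1)(s-2)-2e(S_0[V(S_0)\setminus V_J])$, and the subtracted term is at most $(\Delta+1)(s-v(J)-2)$ by minimality when $s-v(J)\geq 3$, and trivially at most $2$ otherwise; a short arithmetic check then closes each sub-case.

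The main obstacle is the boundary case $s\leq\Delta$ in~(ii), where the density condition alone does not close the gap. Here I will combine the elementary bound $2e(S_0)\leq s(s-1)$ with the density condition to argue that $s\geq\Delta$ for $\Delta\geq 5$, and that in the extremal case $s=\Delta$ necessarily $S_0=K_\Delta$, for which $2e(S_0)=\Delta(\Delta-1)=(\Delta-1)s$ and~(ii) holds with equality. Since $s\geq\Delta\geq 5$, the remaining small-co-size sub-cases of~(iii) only arise for $s-v(J)\in\{1,2\}$, which the arithmetic check above handles.
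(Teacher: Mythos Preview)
Your arguments for~(ii) and~(iii) are essentially the paper's: both reduce to the inequality $(\Delta-1)v(J)<2e(S_0)-2e(S_0[V(S_0)\setminus V_J])$ and dispatch it by splitting on the size of the complement $V(S_0)\setminus V_J$, invoking minimal density for large complement and the fact $s\geq\Delta$ for complement of size~$1$. One small slip: your identity $e(J\oplus W_S)=e(J)+m_J$ is not right, since by definition $J\oplus W_S=(H\oplus W_S)[V(J)\cup W_S]$ carries \emph{all} edges of $H$ inside $V(J)$, not just those surviving in $J=H\cap H'$. The correct identity is $e(J\oplus W_S)=e(S_0[V_J])+m_J$, after which your bound follows directly with no need for the separate step ``$e(J)\leq e(S_0[V_J])$''. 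This does not break your argument, but the derivation as written is muddled.

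The real gap is in~(i). You assert that $H\neq H'$ and hence $v(J)<s$, but the set $\cJ$ is defined as $\{H\cap H':H,H'\in\cH,\ e(H\cap H')>0\}$ \emph{without} the restriction $H\neq H'$, so every $H\in\cH$ itself lies in~$\cJ$. (This is not an accident: in the Janson calculation the first sum runs over $S\neq S'$ with $H,H'$ unrestricted, and $H=H'$ genuinely occurs there.) So you must also prove $2e(S_0)<(\Delta+1)(s-1)$. For $s\leq\Delta$ this follows from $2e(S_0)\leq s(s-1)$, and for $s\geq\Delta+2$ from $2e(S_0)\leq\Delta s$, but for $s=\Delta+1$ both bounds give only $2e(S_0)\leq(\Delta+1)\Delta=(\Delta+1)(s-1)$ with equality precisely when $S_0=K_{\Delta+1}$. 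Ruling this out requires the Case~I hypothesis: since $S_0$ has an edge into $F_{h-1}$, some vertex of $S_0$ has degree at most $\Delta-1$ inside $S_0$, so $S_0$ cannot be the $(\Delta+1)$-clique. You never invoke Case~I, so your proof of~(i) is incomplete exactly at this boundary case.
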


Note that, by~\ref{claim3} of Claim~\ref{claim0},
each graph in $\cH^+$ has at most $(\Delta + 1)s/2$ edges. We will now consider a subfamily $\cS'$ of size at least $\tfrac{2|\cS|}{(\Delta + 1)s}=\Omega(n)$ of those copies of $S$ from $\cS$ so that $S\oplus W_S$ has the same number of edges, say $m$, where  $1\le m\le (\Delta + 1)s/2$.
Using~\ref{claim3} of Claim~\ref{claim0}, and that $q=\omega(n^{-\frac{2}{\Delta+1}})$, let the expected
number of copies from $\cH^+_{\cS'}=\{H\oplus W_S: H\in \cH,S\in
\cS'\}$ in $G_h$ be denoted by  $\mu$, where
	\begin{align*}
	\mu=\sum_{S\in \cS'}\sum_{H\in \cH} q^{e(H\oplus W_S)}=\Omega(n^{s+1}q^{m})=\Omega(n^{s+1}q^{(\Delta+1)s/2})=\omega(n).
	\end{align*}
 Let
	\begin{equation}\begin{split}
	\delta &= \sum_{S,S'\in \cS'} \sum_{\substack{H,H' \in \cH \\ H \oplus W_{S} \sim  H' \oplus W_{S'}}} q^{e(H \oplus W_{S}) + e(H' \oplus W_{S'}) - e((H \oplus W_{S}) \cap (H' \oplus W_{S'}))}
 \\
&=q^{2m}\sum_{S,S'\in \cS'} \sum_{\substack{H,H' \in \cH \\ H \oplus W_{S}
    \sim  H' \oplus W_{S'}}} q^{- e((H \oplus W_{S}) \cap (H' \oplus
  W_{S'}))} \\
&\le q^{2m}\sum_{J\in \cJ}\sum_{\substack{S,S'\in \cS' \\ S\neq S'}}\sum_{\substack{H,H' \in \cH \\ H \cap H'=J }}q^{- e(J)}
+q^{2m}\sum_{J\in \cJ'}\sum_{S\in\cS'}\sum_{\substack{H,H' \in \cH \\ H \cap H'=J}}q^{- e(J \oplus W_S)}
\\
&\leq q^{2m}\sum_{J\in \cJ}|\cS'|^2n^{2s-2v(J)}q^{- e(J)}
+q^{2m}\sum_{J\in \cJ'}\sum_{S\in\cS'}n^{2s-2v(J)}q^{- e(J \oplus W_S)}.
\label{hope2}
\end{split}
\end{equation}
Then, using~\ref{claim1} and~\ref{claim2} of Claim~\ref{claim0}, and as $\mu=\Omega(n^{s+1}q^m)$, we have
\begin{align*}
\frac{\delta }{\mu^2}&=O\Big(\sum_{J\in \cJ}|\cS'|^2 n^{-2v(J)-2}q^{- e(J)}
+\sum_{J\in \cJ'}\sum_{S\in\cS'}n^{-2v(J)-2}q^{- e(J \oplus W_S)}\Big)
\\
&=O\Big(\sum_{J\in \cJ}n^{-2v(J)}q^{- (\Delta+1)(v(J)-1)/2}
+\sum_{J\in \cJ'}|\cS'|\cdot n^{-2v(J)-2}q^{- (\Delta+1)v(J)/2}\Big)
\\
&=o\Big(\sum_{J\in \cJ}n^{-2v(J)}n^{v(J)-1}
+\sum_{J\in \cJ'}n^{-2v(J)-1}n^{v(J)}\Big)=o(n^{-1}).
\end{align*}
Therefore, as $\mu=\omega(n)$ and $\frac{\delta}{\mu^2}=o(n^{-1})$, by Lemma~\ref{lemma:Janson}, the probability that there is no graph in $\cH^+_{\cS'}$ in $G_h$ is at most $\exp(-\frac{\mu^2}{4(\mu+\delta)})=\exp(-\omega(n))$, as required.
For Case I, it is left then only to prove Claim~\ref{claim0}.

\begin{claimproof}[Proof of Claim~\ref{claim0}]

For~\ref{claim1} let $H\in \cH$ be such that $J\subset H$. If $J\neq H$, and $v(J)\geq 3$, then, by~\ref{prop:dense}, we have $2e(J)\leq (\Delta+1)(v(J)-2)<(\Delta+1)(v(J)-1)$, as required. If $v(J)=2$, then $(\Delta+1)(v(J)-1)= \Delta+1> 2\geq 2 e(J)$.

Suppose then that $|J|=|H|$, so $v(J)=s$. If $s\leq\Delta$, then $2e(J)\leq
s(s-1)< (s+1)(s-1)\leq (\Delta+1)(s-1)$, and if $s>\Delta+1$, then
$2e(J)\leq s\Delta<s\Delta+s-(\Delta+1)= (\Delta+1)(s-1)$, as required. If
$s=\Delta+1$, note that, as there is some edge between $S_0$ and $F_{h-1}$
in $F_h$, we have that $S_0$, and hence $J$, is not a clique with $\Delta+1$ vertices. Thus, $2e(J)< s(s-1)=(\Delta+1)(s-1)$.

\smallskip

For~\ref{claim3} suppose $s\geq \Delta+1$. As $H$ is dense we have $2e(H)>(\Delta+1)(s-2)$, and thus
\[
2e(H\oplus W_S)\leq 2\Delta s-2e(H)< 2\Delta s-(\Delta+1)(s-2)=(\Delta+1)s+2(\Delta+1-s)\leq (\Delta+1)s,
\]
as required.

So suppose that $s\leq \Delta$. If $4\leq s\leq \Delta-1$, then, as $2e(H)>(\Delta+1)(s-2)$, we must have
\[
s(s-1)>(\Delta+1)(s-2)\geq (s+2)(s-2)=s(s-1)+s-4\geq s(s-1),
\]
a contradiction. If $s=3$, then $2e(H)>\Delta+1$ contradicts $\Delta\geq 5$.

Finally, if $s=\Delta$, then~$H$ must be the clique on $\Delta$ vertices
because $2e(H)>(\Delta+1)(\Delta-2)=\Delta(\Delta-1)-2$. Therefore,
\[
2e(H\oplus W_S)\leq 2\Delta^2-2e(H)=\Delta(\Delta+1)=(\Delta+1)s.
\]

\smallskip

For~\ref{claim2} let $H, H'\in \cH$ be such that $H\cap H'=J$ and $H\neq
H'$, which exist by the definition of~$\cJ'$.
Observe that $v(J)< s$ (since by our choice of $\cH$  the vertex sets of any two copies are distinct). Let $I=H-V(J)$, and let $e(I,J)$ be the number of edges between $I$ and $J$ in $H$. Then,
\begin{multline*}
2e(J\oplus W_S)\leq 2(\Delta v(J)-e(J)-e(J,I))=2(\Delta v(J)-e(H)+e(I)) \\ =(\Delta+1)v(J)+(\Delta-1)v(J)-2e(H)+2e(I).
\end{multline*}
Thus, to prove the claim it is sufficient to show that $(\Delta-1)v(J)< 2e(H)-2e(I)$.

As $H$ is dense, we have $2e(H)> (\Delta+1)(v(J)+v(I)-2)$. If $v(I)\geq 3$, then, from~\ref{prop:dense}, we have $2e(H)> (\Delta+1)v(J)+2e(I)$. If $v(I)= 2$, then $2e(H)>(\Delta+1)v(J)\geq (\Delta-1)v(J)+2e(I)$.

Finally, suppose $v(I)=1$, so that $e(I)=0$ and $v(J)=s-1$. By the reasoning in the proof of~\ref{claim3}, $s\geq \Delta$, otherwise we reach a contradiction. Thus, $2e(H)>(\Delta+1)(s-2)\geq (\Delta-1)(s-1)= (\Delta-1)v(J)-2e(I)$.

In each case, then, $2e(H)-2e(I)>(\Delta-1)v(J)$ as required.
\end{claimproof}

It remains to consider Case II. In this case there is some graph from $\cS\subseteq\cS_h$ with no edges to $F'_{h-1}$. Therefore, it is sufficient for some graph in $\cH$ to exist. Let $m=e(S_0)$ be the size of each (isomorphic) graph in $\cH$, and note that $2m\leq \min\{s\Delta,s(s-1)\}\leq (s-1)(\Delta+1)$. Thus, we may take
\[
\mu=\sum_{H\in \cH}q^m=\Omega(n^sq^m)=\Omega(n^sq^{(s-1)(\Delta+1)/2})=\omega(n).
\]

Let $\cJ=\{H\cap H': H,H'\in \cH, e(H\cap H')>0,H\neq H'\}$ and note that, if $J\in \cJ$ and $v(J)\geq 3$, then $2e(J)\leq (\Delta+1)(v(J)-2)$ by~\ref{prop:dense}. Let
\begin{align*}
\delta &=\sum_{\substack{H,H' \in \cH \\ H\sim  H',H\neq H' }} q^{e(H) + e(H') - e(H \cap H' )}
=q^{2m}\sum_{J\in \cJ}\sum_{\substack{H,H' \in \cH \\ H\cap  H'=J }}q^{-e(J)}
\leq q^{2m}\sum_{J\in \cJ}n^{2s-2v(J)}q^{- e(J)}
\\
&\leq q^{2m-1}n^{2s-2}+q^{2m}\sum_{J\in \cJ: v(J)\geq 3}n^{2s-2v(J)}q^{-(\Delta+1)(v(J)-2)/2}.
\end{align*}
Then, as $\mu=\Omega(n^sq^m)$, we have
\[
\frac{\delta}{\mu^2}=O\Big(q^{-1}n^{-2}+\sum_{J\in \cJ: v(J)\geq 3}n^{-2v(J)}q^{-(\Delta+1)(v(J)-2)/2}\Big)
=o\Big(n^{-1}+\sum_{J\in \cJ: v(J)\geq 3}n^{-v(J)-2}\Big)=o(n^{-1}).
\]
Therefore, as $\mu=\omega(n)$, and $\frac{\delta}{\mu^2}=o(n^{-1})$, by Lemma~\ref{lemma:Janson}, the probability that there is no graph in $\cH$ in $G_h$ is at most $\exp(-\frac{\mu^2}{2(\mu+\delta)})=\exp(-\omega(n))$, as required.
\end{proof}


\subsection{Proof of Lemma~\ref{lemma:CompleteEmbedding}}

Again, we use Janson's inequality and similar calculations to Ferber, Luh and Nguyen~\cite{ferber2016embedding}.

\begin{proof}[Proof of Lemma~\ref{lemma:CompleteEmbedding}]
Recall that
$W_{h-1}=[2n]\setminus g_{h-1}(F_{h-1})$.
Let $1\leq h\leq k$, $1\leq r\leq |\cS_h'|\leq \tfrac{\eps n}{s_h^2k}$, $\cS\subseteq \cS_h'$ and $U\subseteq W_{h-1}$ with $|\cS|=r$ and $|U|\leq s_h^2r$. Note that, as $|U|\leq s_h^2r\leq \eps n$, we have $|U\cup (W_{0}\setminus W_{h-1})|\leq 2\eps n$.
Therefore, by the property stated in \ref{AA2}, for each $v\in V(F)\setminus V(F^*)$ and each $u\in [2n]$, we have
\begin{equation}\label{helpful}
|N_{G}(u,B(v))\cap (W_{h-1}\setminus U)|\geq 2\eps n,
\end{equation}
 and, in particular, $|B(v)\cap (W_{h-1}\setminus U)|\geq 2\eps n$ and we set $B'(v)=B(v)\cap (W_{h-1}\setminus U)$.

Let $s=s_h$. 
 As in the proof of Lemma~\ref{lemma:proofalmost}, we will consider two cases: Case I where each copy $S$ from $\cS$ has some edge between $S$ and $F_{h-1}$ in $F_h$ and Case II when for some copy $S$ from $\cS$ there is no such edge.

\smallskip

Suppose first
we are in Case~I. For all $S\in \cS$, since $\Delta(F)\le \Delta$ and $|S|=s$, there are certainly at most $\Delta s$ vertices in $F_{h-1}$ with some edge in $S$, and at most $2^s$ ways of attaching such a vertex to $S$. Thus, we can consider a subfamily $\cS'$ of at least $\tfrac{1}{2^{\Delta s^2}}|\cS|=\Omega(n)$ copies of $S$ from $\cS$ which are all {isomorphic} when the edges from $S$ to $F_h$ are added to $S$.
 Pick $S_0\in \cS'$.
Label $V(S_0)=\{v_1,\ldots,v_s\}$ so that $v_1$ has a
neighbour in $F_{h-1}$.
Recall that for $S\in \cS$ we labelled $V(S)=\{z_{S,1},\ldots, z_{S,s_h}\}$.
Without loss of generality, we can assume for each $S\in \cS$ that $v_i\mapsto z_{S,i}$ is an isomorphism from $S_0$ into $S$, and that $z_{S,1}$ has a neighbour in $F_h$ in $V(F_{h-1})$ (possible as we are in Case I).  Let $\cH$ be a set of $\binom{|U|}{s}$ copies of $S_0$ in the complete graph with vertex set $U$, where each copy of $S_0$ has a different vertex set.
For each $H\in \cH$, label $V(H)=\{v_{H,1},\ldots,v_{H,s}\}$ so that
$v_i\mapsto v_{H,i}$ is an isomorphism of~$S_0$ to~$H$.

For each $S\in \cS'$, pick the image~$w_S$ of an already embedded neighbour
of the vertex~$z_{S,1}$ corresponding to~$v_1$, that is, pick
$w_S\in
g_{h-1}(N_{F_h}(z_{S,1})\cap V(F_{h-1}))$.
For each $S\in \cS'$, let
\[
\cH_S=\{H\in \cH: v_{H,1}\in N_{G}(w_S)\text{ and }v_{H,i}\in B'(v_{S,i})\text{ for each }1\leq i\leq s\}.
\]
For each $S\in \cS'$, note that, from~\eqref{helpful}, we have $|\cH_S|=\Omega(n^s)$.

For each $S\in \cS'$, let $W_S=g_{h-1}(\bigcup_{v\in V(S)}N_{F_{h}}(v)\cap V(F_{h-1}))$ be
the set of images of already embedded neighbours of vertices in~$S$.
For each $H\in \cH_S$ and $S\in \cS'$, let $H\oplus W_S$
be the graph with vertex set $V(H)\cup W_S$ and edge set
\[
E(H)\cup (\{v_{H,i}v: 1\leq i\leq s, v\in g_{h-1}(N_{F_{h}}(w_{S,i})\cap V(F_{h-1}))\}\setminus \{v_{H,1}w_S\}).
\]
These are exactly the edges we need in order to extend our embedding of $F_{h-1}$ to contain $S$ embedded into $H$, as $v_{H,1}w_S\in E(G)$.
Let $\cH^+=\{H\oplus W_S: S\in \cS',H\in \cH_S\}$, and note that if any graph $H\oplus W_S\in \cH^+$ appears in $G_h'$ then, as $v_{H,1}w_S\in E(G)$, $V(H)\in E(L_S)$, and we are done.

Let $\cJ=\{H\cap H': H,H'\in \cH,e(H\cap H')>0\}$ and $\cJ'=\{H\cap
H': H,H'\in \cH, H\neq H'\}\setminus \emptyset$. Note that~\ref{claim1} and~\ref{claim2} of Claim~\ref{claim0} hold here as well. For each $H\in \cH$ and $S\in \cS'$, $E(H\oplus W_S)$ does not include $v_{H,1}w_S$, and, therefore, in place of~\ref{claim3}, the following holds.
\emph{
		\begin{enumerate}[label=\romprime,start=2]
			\item \label{claim4} For each $S\in \cS$ and $H\in \cH_S$, $2e(H\oplus W_S)\leq (\Delta+1)s-2$.
		\end{enumerate}
}
Note that, by our choice of $\cS'$, each graph in $\cH^+$ has the same number of edges, $m$ say. Note that, as the property we are looking for is monotone, we may assume that $q^{-1/2}=\omega(\log n)$. Using~\ref{claim4}, let
	\begin{align*}
	\mu = \sum_{S\in \cS'}\sum_{H\in \cH_S} q^{m}=\Omega(rn^{s}q^m)=\Omega(rn^{s}q^{(\Delta+1)s/2-1})=\Omega(rq^{-1})=\omega(r\log n).
	\end{align*}
We remark that this is the only place where we use that the edge $v_{H,1}w_S$ is not included in $H \oplus W_S$, since it is already present in $G$.

Defining $\delta$ as follows, and using similar deductions to those used to reach~\eqref{hope2}, we have
\begin{align*}
\delta &= \sum_{S,S'\in \cS'} \sum_{\substack{H\in\cH_S,H' \in \cH_{S'} \\ H \oplus W_{S} \sim  H' \oplus W_{S'}}} q^{e(H \oplus W_{S}) + e(H' \oplus W_{S'}) - e((H \oplus W_{S}) \cap (H' \oplus W_{S'}))}
\\
 &\leq q^{2m}r^2\sum_{J\in \cJ}n^{2s-2v(J)}q^{- e(J)}
+q^{2m}\sum_{J\in \cJ'}\sum_{S\in\cS'}n^{2s-2v(J)}q^{- e(J \oplus W_S)}.
\end{align*}

Then, using~\ref{claim1} and~\ref{claim2} of  Claim~\ref{claim0}, and that $\mu =\Omega(rn^{s}q^m)$, we have
\begin{align*}
\frac{\delta }{\mu^2}&=O\Big(\sum_{J\in \cJ}n^{-2v(J)}q^{- e(J)}
+r^{-2}\sum_{J\in \cJ'}\sum_{S\in\cS'}n^{-2v(J)}q^{- e(J \oplus W_S)}\Big)
\\
&=O\Big(\sum_{J\in \cJ}n^{-2v(J)}q^{- ((\Delta+1)(v(J)-1)-1)/2}
+r^{-2}\sum_{J\in \cJ'}\sum_{S\in\cS'}n^{-2v(J)}q^{- ((\Delta+1)v(J)-1)/2}\Big)
\\
&=o\Big(q^{1/2}\sum_{J\in \cJ}n^{-2v(J)}n^{v(J)-1}
+q^{1/2}r^{-2}\sum_{J\in \cJ'}\sum_{S\in\cS'}n^{-2v(J)}n^{v(J)}\Big)
\\
&=o(q^{1/2}n^{-1}+q^{1/2}r^{-1})=o(r^{-1}\log^{-1}n).
\end{align*}
Therefore, as $\mu=\omega(r\log n)$, and $\frac{\delta}{\mu^2}=o(r^{-1}\log^{-1}n)$, by Lemma~\ref{lemma:Janson}, the probability that there is no graph in $\cH^+$ in $G'_h$ is at most $\exp(-\tfrac{\mu^2}{2(\mu+\delta)})=\exp(-\omega(r\log n))$, completing the proof of Lemma~\ref{lemma:CompleteEmbedding} in Case I.

\smallskip

Let us assume now we are in Case II, with some $S_0$ with no edges between $S_0\in \cS'$ and $F_{h-1}$ in $F_h$.
Label $V(S_0)=\{v_1,\ldots,v_s\}$. Let $\cH$ be a maximal set of copies of $S_0$ in the complete graph with vertex set $U$, where each copy $H$ of $S_0$ has a different vertex set, $\{v_{H,1},\ldots,v_{H,s}\}$ say, so that  $v_i\mapsto v_{H,i}$ is an embedding of $S_0$, and $v_{H,i}\in B'(v_{S_0,i})$ for each $1\leq i\leq s$.

Note that if we have some graph $H\in \cH$ in $G'_h$, then we are
done, as then $V(H)\in E(L_{S_0})$. From~\eqref{helpful}, we have $|\cH|=\Omega(n^s)$, so, with very
similar calculations to Case II in the proof of
Lemma~\ref{lemma:proofalmost}, we have that the probability that there exists no graph from $\cH$ in $G'_h$ is at most $\exp(-\omega(n))\le\exp(-\omega(r\log (n/r))$, as required.
\end{proof}


\section{Concluding remarks}\label{sec:conclude}

\subsection*{Extending Theorem~\ref{theorem:main} to smaller maximum degrees}
Theorem~\ref{theorem:main} can be easily  extended to $\Delta \le 3$ using basically the same approach as in Section~\ref{sec:max}.
The definition of the `dense spots', however, has to be slightly adapted to each case, but since it is straightforward,  we omit the details.
There is no extension to $\Delta=4$ of Theorem~\ref{Theorem:Ferber} due to
the existence of one problematic dense spot: a triangle attached to the
rest of the graph with two pendant edges at each vertex. This means that,
using a similarly defined set of subgraphs $\cF$ as in the proof of
Theorem~\ref{theorem:main}, we cannot show that one of these subgraphs
appears whp in $G(n,\omega(n^{-2/5}))$ (i.e. we cannot prove \ref{AA1} in Definition~\ref{suitabledefn}),  and this prevents our approach from extending to this case.

\subsection*{Using our method}
Our main technical theorem, Theorem~\ref{newtheorem}, provides a new
general purpose tool for finding spanning structures~$F$ in randomly
perturbed graphs $G_\alpha\cup \Gnp$.
To use Theorem~\ref{newtheorem}, it is sufficient to show that $F$ has
a collection of subgraphs which is $(\alpha,p)$-suitable.
Our approach avoids the regularity lemma, which appears in many
previous proofs for results concerning spanning structures in $G_\alpha\cup
\Gnp$~\cite{BTW_tilings,krivelevich2015cycles,krivelevich2015bounded}.
In particular, our approach provides simpler proofs for recent results concerning bounded degree
spanning trees and factors, as we sketch in the following.

\subsection*{Spanning trees}
Krivelevich, Kwan and Sudakov~\cite{krivelevich2015bounded} showed that,
for any $\alpha,\Delta>0$, if $p= \omega (1/n)$ and $T$ is an $n$-vertex
tree with maximum degree at most $\Delta$, then $G_\alpha\cup \Gnp$
contains a copy of $T$ whp. We can reprove this result using
Theorem~\ref{newtheorem} as follows.
Fixing $\alpha>0$ and $\Delta>0$, let $\eps=\eps(\alpha,\Delta)$ be as given in Definition~\ref{suitabledefn}. Let $p=\omega(1/n)$ and let $T$ be a tree with $n$ vertices and maximum degree at most $\Delta$. Clearly $T$ contains some subtree $T'$ with just over $(1-\eps)n$ vertices, pick such a subtree and let $\cF=\{T'\}$.
By the work of Alon, Krivelevich and Sudakov~\cite{alon2007embedding}, we
know that $G(n,p/2)$ whp contains a copy of $T'$, and therefore \ref{AA1} holds for $\cF$. Furthermore, \ref{AA2} easily holds without even recourse to the random edges in $G(2n,p/6)$. The copy of $T'$ can be extended by iteratively adding leaves. When we wish to add a leaf to a vertex $w$, say, to embed $v\in V(F)\setminus V(F')$, as $|B(v)\cap N_G(w)|\geq 4\eps n$, there will be many vertices to choose from in $B(v)\cap N_G(w)$ which are not yet in the embedding.
Thus, $\cF$ is $(\alpha,p)$-suitable and Theorem~\ref{newtheorem} applies.

\subsection*{Factors}
Balogh, Treglown and Wagner~\cite{BTW_tilings} showed that for every~$H$,
if $p=\omega(n^{-1/m_1(H)})$, then $G_\alpha\cup \Gnp$ contains an
$H$-factor whp. Again, we can use Theorem~\ref{newtheorem} to easily
reprove this result.
Indeed, let $F$ be an $H$-factor and $\cF$ be the set of subgraphs of $F$
consisting of disjoint copies of $H$ which cover at least $(1-\eps)n$
vertices. By Theorem~\ref{thm:factors} we have that~\ref{AA1} holds
for~$\cF$.
Another simple application of Janson's inequality gives that~\ref{AA2}
holds as well.

\subsection*{Randomly perturbed hypergraphs}
Recently generalisations of the model of randomly perturbed graphs to
hypergraphs attracted much attention. Again, the union of a binomial random $r$-uniform
hypergraph $G^{(r)}(n,p)$ and a deterministic $r$-uniform hypergraph $G_\alpha$ satisfying a certain minimum
degree condition is considered.
In the hypergraph setting several different notions of minimum degree are possible.

The study of randomly perturbed hypergraphs was initiated by
Krivelevich, Kwan and Sudakov ~\cite{krivelevich2015cycles} who considered
hypergraphs $G_\alpha$ with \emph{collective minimum degree} $\alpha n$, that is,
each $(r-1)$-set of vertices of $G_\alpha$ is contained in at least $\alpha n$ edges.
A \emph{loose Hamilton cycle} in an $r$-uniform hypergraph on $n=(r-1)k$
vertices for some integer~$k$, is a labelling
of its vertices by $0,\dots,n-1$ such that $\{i,\dots,i+(r-1)\}$ is an
edge for each $i=(r-1)j$ with $j\in\{0,\dots,k-1\}$, where indices are taken modulo~$n$. In other
words, consecutive edges of a loose Hamilton cycle overlap in exactly one
vertex.
We remark that, for loose Hamilton cycles, a Dirac-type theorem is known~\cite{keevash2011loose}.
Krivelevich, Kwan and Sudakov ~\cite{krivelevich2015cycles} proved that,
for any $G_\alpha$ with collective minimum degree $\alpha n$, the addition
of random edges with edge probability $c(\alpha)n^{-r+1}$  (where
$c(\alpha)>0$ depends on $\alpha$ only) is sufficient to create whp perfect
matchings as well as loose Hamilton cycles.
Comparing this to the threshold for matchings and loose Hamilton cycles in random hypergraphs, which is $n^{-r+1} \log n$ \cite{dudek2011loose,Fri10,JohanssonKahnVu_FactorsInRandomGraphs}), this again differs by a factor of $\log n$.

%
%
%
%

Different minimum degree conditions were considered by
McDowell and Mycroft~\cite{MM_HyperPeturbed}. An $r$-uniform hypergraph $G_\alpha$ has \emph{minimum
  $\ell$-degree} at least $\alpha n^{r-\ell}$ if
each $\ell$-set of vertices of $G_\alpha$ is contained in at least $\alpha n^{r-\ell}$ edges.
An \emph{$\ell$-overlapping} cycle is defined analogously to a loose
Hamilton cycle, but with consecutive edges overlapping in exactly~$\ell$
vertices. A \emph{tight Hamilton cycle} in an $r$-uniform hypergraph is an
$(r-1)$-overlapping Hamilton cycle.
McDowell and Mycroft~\cite{MM_HyperPeturbed} showed that for
$\ell$-overlapping Hamilton cycles with $\ell \ge 3$ it is possible to save a
polynomial factor $n^\varepsilon$ on the edge probability in randomly perturbed $r$-uniform
hypergraphs $G_\alpha\cup G^{(r)}(n,p)$ compared to $G^{(r)}(n,p)$ alone,
under the assumption that $G_\alpha$ has minimum $\ell$-degree at least
$\alpha n^{\ell}$ and minimum $(r-\ell)$-degree at least $\alpha n^{r-\ell}$.
This result was extended by Bedenknecht, Han, Kohayakawa, and
Mota~\cite{BHKM_powers} to powers of tight Hamilton cycles, with the
additional assumption of collective minimum degree at least $\alpha n$ with $\alpha>c_{r,\ell}$.

The weaker notion of minimum $1$-degree was studied in the context of
randomly perturbed hypergraphs by Han and Zhao~\cite{HanZhao2017}.  It is
not difficult to see that an $r$-uniform hypergraph with minimum collective
degree at least $\alpha n$ has minimum $1$-degree at least
$\alpha \binom{n-1}{r-1}$.  Hence, Han and Zhao~\cite{HanZhao2017}
strengthen the results of Krivelevich, Kwan and Sudakov by proving that
adding $c(\alpha)n$ random edges to $G_\alpha$, whp creates a perfect
matching and a loose Hamilton cycle. Furthermore, adding $c(\alpha)n^{r-1}$
random edges to $G_\alpha$ gives rise to a tight Hamilton cycle. Both these
results, as well as those from~\cite{krivelevich2015cycles}, use the
regularity method.

The absorption technique we introduce in this paper can be
extended to the randomly perturbed hypergraph model, and may allow some
progress. In particular, we have confirmed that an easy extension of our
method gives the appearance threshold for a perfect matching and a loose
Hamilton cycle in this model, recovering the results
of~\cite{krivelevich2015cycles} and~\cite{HanZhao2017}.

We note that the third and fourth of the current
authors~\cite{parczyk2016spanning} have extended the result of
Riordan~\cite{Riordan} to hypergraphs. Similar extensions of
Theorems~\ref{Theorem:Ferber} and~\ref{theorem:main} however remain open
and would be very interesting.

\subsection*{Universality}
We believe that a universality result corresponding to our main theorem
holds as well. That is, we believe that when $p=\omega(n^{-\frac{2}{\Delta+1}})$ the
randomly perturbed graph
 $G_\alpha\cup \Gnp$ contains whp a copy of every graph in
 $\cF(n,\Delta)$ simultaneously.
However, our use of Riordan's result~\cite{Riordan}, which was proved by
second moment calculations, makes it unlikely that our techniques can be
used to obtain such a result.
Thus, new ideas are required.
Similarly, $p_\Delta$ is commonly believed to be the threshold for $\Gnp$
to contain a copy of every graph in $\cF(n,\Delta)$ simultaneously, but the current
methods to attack this problem (see the discussion after
Theorem~\ref{Theorem:Ferber}) require an edge probability in distinct
excess of this conjectured threshold.

In the case of spanning bounded degree trees, in joint work with Han and
Kohayakawa we establish the following universality result
in~\cite{BHKMPP17}. We show that $G_\alpha\cup G(n,c(\alpha,\Delta)/n)$
simultaneously contains all spanning trees of maximum degree at most
$\Delta$.

\section*{Acknowledgement}

We would like to thank the referee for their valuable comments.

\bibliographystyle{amsplain_yk}
\bibliography{literatur}

\end{document}